\definecolor{mygreen}{rgb}{0,0.6,0}
\definecolor{mygray}{rgb}{0.5,0.5,0.5}
\definecolor{mymauve}{rgb}{0.58,0,0.82}
\tiny\color{mygray}, 
\definecolor{labelkey}{rgb}{0,0.08,0.45}
\definecolor{refkey}{rgb}{0,0.6,0.0}
\definecolor{Brown}{rgb}{0.45,0.0,0.05}
\definecolor{dgreen}{rgb}{0.00,0.49,0.00}
\definecolor{dblue}{rgb}{0,0.08,0.75}
\DeclareSymbolFont{largesymbolsA}{U}{jkpexa}{m}{n}
\DeclareMathSymbol{\varprod}{\mathop}{largesymbolsA}{16}
\definecolor{labelkey}{rgb}{0,0.08,0.45}
\definecolor{refkey}{rgb}{0,0.6,0.0}
\definecolor{Brown}{rgb}{0.45,0.0,0.05}
\definecolor{dgreen}{rgb}{0.00,0.49,0.00}
\definecolor{dblue}{rgb}{0,0.08,0.75}
\newtheorem{theorem}{Theorem}[section]
\newtheorem{corollary}[theorem]{Corollary}
\newtheorem{lemma}[theorem]{Lemma}
\newtheorem{proposition}[theorem]{Proposition}
\newtheorem{assumption}[theorem]{Assumption}
\theoremstyle{definition}
\newtheorem{definition}[theorem]{Definition}
\newtheorem{remark}[theorem]{Remark}
\newtheorem{example}[theorem]{Example}
\def\acknowledgement{\par\addvspace{17pt}\small\rmfamily
	\trivlist\if!\ackname!\item[]\else
	\item[\hskip\labelsep
	{\bfseries\ackname}]\fi}
\numberwithin{equation}{section}
\newcommand{\XX}{\ensuremath \mathcal{X}}
\DeclareMathOperator*{\dom}{\ensuremath{\text{\rm dom}}}
\DeclareMathOperator*{\argmin}{\text{\rm argmin}}
\newcommand{\scal}[2]{{\left\langle{{#1},{#2}}\right\rangle}}
\newcommand{\Id}{\ensuremath{\operatorname{Id}}\,}
\newcommand{\RR}{\ensuremath{\mathbb{R}}}
\newcommand{\NN}{\ensuremath{\mathbb N}}
\newcommand{\ft}{g}
\newcommand{\fh}{q}
\newcommand{\vertiii}[1]{{\left\vert\kern-0.25ex\left\vert\kern-0.25ex\left\vert #1 
		\right\vert\kern-0.25ex\right\vert\kern-0.25ex\right\vert}}
\newcommand{\normi}{\vert\kern-0.25ex\vert\kern-0.25ex\vert}
\ProvideTextCommand{\DJ}{OT1}{\raisebox{0.25ex}{-}\kern-0.4em D}
\numberwithin{equation}{section}
\def\FF{{\mathcal{F}}}
\newcommand{\cris}[1]{\textcolor{black}{#1}}
\def\t{\overline{t}}
\title{Delayed Feedback in Online Non-Convex Optimization: A Non-Stationary Approach with Applications}
\author{Felipe Lara\thanks{Instituto de Alta investigaci\'on (IAI), Universidad de Tarapac\'a,
Arica, Chile. E-mail: felipelaraobreque@gmail.com; flarao@academicos.uta.cl. Web:
felipelara.cl, ORCID-ID: 0000-0002-9965-0921} \and
Cristian Vega\thanks{Instituto de Alta investigaci\'on (IAI), Universidad de Tarapac\'a,
Arica, Chile. E-mail: cristianvegacereno6@gmail.com,  ORCID-ID: 0000-0001-7792-0137}`}
\date{\ttfamily \today}
\begin{document}

\maketitle
\begin{abstract}
\noindent We study non-convex online optimization problems with delay and noise by evaluating dynamic regret in the non-stationary setting when the loss functions are quasar-convex. In particular, we consider scenarios involving quasar-convex functions either with Lipschitz gradients or with weak smoothness, and in each case we establish bounded dynamic regret in terms of cumulative path variation, achieving sub-linear rates. Furthermore, we illustrate the flexibility of our framework by applying it to both theoretical settings such as zeroth-order (bandit) and practical applications with quadratic fractional functions. Moreover, we provide new examples of non-convex functions that are quasar-convex by proving that the class of differentiable strongly quasiconvex functions is strongly quasar-convex on convex compact sets. Finally, several numerical experiments validate our theoretical findings, illustrating the effectiveness of our approach.
\end{abstract}
\medskip

\noindent{\small \emph{Keywords}: 
 Non-convex online optimization; Delayed algorithms; Quasar-convexity; Bandit; 
 Quadratic fractional programming.}

\section{Introduction}

Online optimization \cite{hazan2007logarithmic,zinkevich2003online,shalev2012online,hazan2016introduction} is an outstanding mathematical model for sequential decision-making, which involves continuous optimization, machine learning, and data sciences, among others. It is used to provide automated decisions based on data, and its goal is to minimize the cumulative loss (or regret) of those decisions. This model has attracted the attention of the scientific community from several research fields in view of its applications in a variety of real-world problems, such as portfolio selection \cite{agarwal2006algorithms,blum1997universal}, dynamic pricing \cite{chen2024online}, decision trees \cite{helmbold1995predicting}, machine learning \cite{blum2005line}, shortest paths \cite{kalai2005efficient}, network resource allocation \cite{wang2021delay,wang2022delay}, and generalized linear models \cite{pun2024online}, among others. 
\medskip

\noindent Each decision involves a loss determined by a function, which is revealed only after it is made. It can be viewed as a structured, repeated game between a learner and an adversary. At each round $t$, a player selects a decision $x_{t}$ from a convex feasible set $\XX \subset \RR^{p}$. Then, an adversary gives a loss or cost function $f_{t} \colon \XX \to \RR$, and the player incurs a loss $f_{t}(x_{t})$. The aim is to minimize the (stationary) regret $R^{S}_{T}$, defined as:$$R^{S}_{T} := \sum\limits_{t=1}^T f_{t}(x_{t})-\min\limits_{x \in \XX} \sum\limits_{t=1}^T f_{t}(x),$$which is the difference between the cumulative loss of the algorithm and the cumulative loss of the best fixed decision in hindsight \cris{(assuming it exists)}. Since the loss function is only revealed after the decision is made, we cannot expect to beat the opponent.  Significant regret is required to be sub-linear in $T$, ensuring that as $T$ becomes sufficiently large, the player performs as well as the best fixed decision.\medskip

\noindent \cris{A classic assumption in the literature is that loss functions are convex.} This assumption is not only due to the many applications of convex functions but also because of theoretical limitations, since when dealing with non-convex loss functions, several useful properties are no longer valid — for instance, local information is no longer global. In recent years, several efforts to deal with non-convex loss functions have been developed, mainly motivated by real-world applications in which classes of non-convex functions are more natural than convex ones. An interesting case occurs when dealing with {\it (strongly) quasar-convex} functions introduced in \cite{CEG,guminov2017accelerated,hinder2020near,hardt2016gradient} (under different names).  This class of functions provides us with several useful properties that are sufficient to ensure convergence rates of the gradient method, even when quasar-convex functions may have several local minimizers that are not necessarily global (see Example \ref{ex1}).
\medskip

\noindent\cris{ Another classical assumption is that the strategy is unchanged throughout the periods.} However, in several real-world scenarios, stationary regret is not adequate for evaluating performance. For instance, when aiming to maximize profit after several trades, the profit depends on the best investment at each round, rather than on the best investment for the sum of all loss functions at the final round. In such cases, the so-called {\it non-stationary regret} \cite{besbes2015non}, which compares the cumulative loss with the minimum of each loss, is more suitable:
\begin{align}
R^{NS}_{T}=\sum\limits_{t=1}^{T} f_{t}(x_{t})-\sum\limits_{t=1}^{T} f_{t}(x_{t}^{*}),
\end{align}
where $x_{t}^{*}$ represents the solution that minimizes $f_{t}(x)$ over $\XX$. \cris{Let  $x^{*} \in \argmin\limits_{x \in \XX} \sum\limits_{t=1}^{T} f_{t}(x) $denote a minimizer of the cumulative loss over $\XX$ (assuming it exists). Since $f_{t}(x_{t}^{*}) \leq f_{t}(x^{*})$ for every $t \in \{1,\ldots, T\}$,it follows that the non-stationary regret is always greater than or equal to the stationary regret, i.e., $R^{S}_{T} \leq R^{NS}_{T}.$} Moreover, dynamic regret has a theoretical advantage since the sum of non-convex functions may not have a unique minimizer even when each individual function has its own unique minimizer; for instance, strongly quasiconvex and strongly quasar-convex functions have unique global minimizers, but the sum of even two of these functions may have more than one minimizer. This makes dynamic regret particularly useful for addressing non-convexity.\medskip

\noindent Observe that the previous approach does not consider the potential delay between making a decision and receiving feedback, assuming that the loss function $f_t$ is revealed immediately after the player makes their decision $x_t$. Delays of this kind have been extensively studied in \cite{wan2022online,quanrud2015online,joulani2013online,wan2023non} and are common in real-world applications such as in economic analysis or online advertising \cite{he2014practical}. For example, when assessing the impact of new economic or financial data or political decisions with economic consequences, feedback is often delayed. Similarly, in online advertising, the output of the loss function may depend on whether a user clicks an ad. Users may take additional time before providing feedback, and non-feedback can only be inferred if the user does not click the ad after a sufficiently long period, which makes the situation uncertain for some time after the ad has been viewed.\medskip

\noindent Another classical extension of online optimization is the bandit setting, where the gradient of $f_t$ is approximated by a zeroth-order method. This approach is suitable for problems where the gradient is not directly available or is computationally expensive to compute, even when the function itself is explicitly known. The bandit setting has proved useful in several applications across fields such as economics, robotics, statistics, and machine learning, and its online version has been extensively studied in \cite{flaxman2004online,agarwal2010optimal,li2019bandit,saha2011improved}.\medskip

\noindent In this paper, we study online optimization in a very general setting. First, we address the non-convex case by assuming that the loss functions are not necessarily convex; in particular, we assume them to be quasar-convex (see \cite{CEG,guminov2017accelerated,hinder2020near,hardt2016gradient}), a class of functions that includes convex functions as well as several non-convex ones. Second, we incorporate a delay between the decision and the corresponding feedback, which is a realistic assumption for applied models where data requires additional time for analysis or computation. Third, we consider the presence of noise in the collected data to obtain greater flexibility and applicability of our online optimization model. Under this general framework, we establish bounded dynamic regret with sub-linear rates. Furthermore, we provide new examples of non-convex functions that fall into the class of quasar-convex functions, namely the class of strongly quasiconvex functions (Polyak \cite{P-1966}), which has been shown to be useful in several applications in mathematical models and algorithms (see \cite{grad2025strongly,iusem2024two,lara2022strongly,lara2024characterizations}). Finally, we apply our results to two important classes of (non-convex) quasar-convex functions: generalized linear models (GLM henceforth), and quadratic fractional functions, which are (strongly) quasiconvex. Several non-convex numerical experiments illustrate our findings.\medskip

\noindent The structure of the paper is as follows:In Section \ref{sec: sa}, we analyze the state of the art of the proposed problem under convexity/non-convexity assumptions and with and without delay. In Section \ref{sec:back}, we introduce the notation along with some preliminary concepts. Section \ref{sec:mr} presents the main algorithm and provides a regret bound. Additionally, we demonstrate the flexibility of our approach by extending the algorithm and its theoretical guarantees to the bandit setting. In Section \ref{sec:ea}, we present new examples of quasar-convex functions that will be used in the subsequent section. Section \ref{sec:ne} evaluates the algorithm's performance on three numerical applications: highly non-convex functions, GLM, and quadratic fractional functions. Finally, Section \ref{sec:cfw} offers conclusions and future research directions.

\section{State of the Art and Related Work}\label{sec: sa}

The standard algorithm for online optimization is the Online Gradient Descent (OGD) algorithm. At each step, the algorithm updates its decision based on the gradient of the loss function at the current decision: \begin{align*}x_{t+1} = P_{\XX} (x_{t} - \eta_{t} \nabla f_{t} (x_{t})),\end{align*}where $P_{\XX}$ denotes the projection onto the convex set $\XX$, and $\eta_{t}$ is the step-size (learning rate). Several papers have established sub-linear regret bounds for different classes of functions. For convex functions without delay, \cite{zinkevich2003online} proved a stationary regret bound of $\mathcal{O}(T^{\frac{1}{2}})$, while \cite{hazan2007logarithmic} improved this result to a regret rate of $\mathcal{O}(\ln(T))$ for strongly convex functions. In \cite{flaxman2004online} and \cite{agarwal2010optimal}, it is extended for the bandit setting for convex and strongly convex, respectively. 

\medskip    

\noindent \cris{In \cite[Theorem~2.1]{quanrud2015online}, the delayed version of the OGD algorithm (DOGD) is proposed for \cris{$L$-Lipschitz functions}. It is shown that DOGD with a constant learning rate $\eta_{t}=L^{-1}\left(T+D\right)^{-\frac{1}{2}}$ achieves a regret bound of order $\mathcal{O}(D^{\frac{1}{2}})$, where $D=\sum\limits_{t=1}^{T} d_{t}$ denotes the sum of the delay of each round $d_{t}$.} The extension of DOGD for the bandit setting was studied in \cite{li2019bandit}. Observe that the constant learning rate used in DOGD does not take advantage of the strong convexity of loss functions. In standard OGD, \cite{hazan2007logarithmic} established a $\ln(T)$ regret bound for $\beta$-strongly convex functions by setting $\eta_{t}=\frac{1}{\beta t}$. This learning rate leverages the increasing nature of the inverse of $\frac{1}{\eta_{t}}$ by the modulus of strong convexity of $f_{t}$, where $\frac{1}{\eta_{t}}-\frac{1}{\eta_{t-1}}=\beta$. Inspired by this, \cite{wan2022online} proposes an initialization where $\frac{1}{\eta_{0}}=0$. Otherwise, $\frac{1}{\eta_{t}}-\frac{1}{\eta_{t-1}}=a\beta,$ where $a$ is the number of gradients that arrive at round $t$. DOGD for strongly convex functions achieves a regret bound of $\mathcal{O}(d\ln(T))$, \cris{where $d=\max\limits_{t\in [T]} d_{t}$ denotes the maximum delay of all rounds $\left\{d_{t}\right\}_{t\in [T]}$.} The results for the stationary setting are summarized in Table \ref{tab:regret_bounds S}.\\

\noindent In \cite{besbes2015non}, it is shown that without any additional assumptions regarding how the loss functions evolve, the non-stationary regret can grow linearly with time $T$. To achieve a sub-linear non-stationary regret, it is essential to impose additional assumptions on the variation of the loss functions over time, often quantified by \cris{the cumulative variation of the loss functions $C_{T}$, defined as:\begin{equation*}  C_{T} \;=\; \sum\limits_{t=1}^{T-1} \sup_{x \in \XX} \left| f_{t}(x) - f_{t+1}(x) \right|.\end{equation*}}

\noindent Specifically, \cite{besbes2015non} establishes non-stationary regret bounds of $\mathcal{O} (T^{\frac{2}{3}}C_{T}^{\frac{1}{3}})$ for Online Gradient Descent with noisy gradient feedback for convex functions and $\mathcal{O} (T^{\frac{1}{2}}C_{T}^{\frac{1}{2}})$ for strongly convex functions. Subsequently, \cite{mokhtari2016online} improved the bound in the strongly convex case to $\mathcal{O}\!( V_{T})$, where $V_{T}$ is the cumulative path defined as follows: \begin{equation*} V_{T}=\sum\limits_{t=1}^{T-1} \|x_{t}^{*}-x_{t+1}^{*}\|.\end{equation*}For the delayed setting, \cite{wan2023non} obtained a non-stationary regret bound of $\mathcal{O}\!(\left(dTV_{T}\right)^{\frac{1}{2}} )$ for convex functions.\\ 

\medskip\noindent \cris{In \cite{kim2022online}, OGD was studied for functions satisfying the Polyak-{\L}ojasiewicz condition, while in \cite{zhang2017improved}, it was studied for functions that have quadratic growth. In \cite{zhang2015online}, compositions of increasing functions with linear terms are studied.  In \cite{gao2018online}, weakly pseudo-convex functions are considered, achieving a stationary regret bound of $\mathcal{O}(V_{T})$ without delay.}  Lastly, for quasar-convex functions, \cite{pun2024online} provided several significant results: a stationary regret bound of $\mathcal{O}(T^{\frac{1}{2}}V_{T}^{\frac{1}{2}})$ \cris{without delay}, a bound of $\mathcal{O}(V_{T})$ for weakly smooth and quasar-convex functions without delay, and another bound of $\mathcal{O}(V_{T})$ for strongly quasar-convex functions without delay. The results for the non-stationary setting are summarized in Table \ref{tab:regret_bounds}.
\begin{table}[htbp]
\centering
\begin{tabular}{|l|l|l|l|}
\hline
\textbf{Work} & \textbf{Function} & \textbf{Delay}  & \textbf{Regret Bound} \\ \hline
\cite{zinkevich2003online} & Convex & No & $\mathcal{O}(T^{\frac{1}{2}})$ \\ \hline
\cite{hazan2007logarithmic} & Strongly Convex & No & $\mathcal{O}(\ln(T))$ \\ \hline
\cite{quanrud2015online} & Convex & Yes & $\mathcal{O}(D^{\frac{1}{2}})$ \\ \hline
\cite{wan2022online} & Strongly Convex & Yes & $\mathcal{O}(d\ln(T))$ \\ \hline
\end{tabular}
\caption{Regret bounds for stationary regret}
\label{tab:regret_bounds S}
\end{table}

\begin{table}[htbp]
\centering
\begin{tabular}{|l|l|l|l|}
\hline
\textbf{Work} & \textbf{Function} & \textbf{Delay}  & \textbf{Regret Bound} \\ \hline
\cite{besbes2015non} & Convex & No  & $\mathcal{O}(T^{\frac{1}{3}}V_{T}^{\frac{2}{3}})$ \\ \hline
\cite{mokhtari2016online} & Strongly Convex & No & $\mathcal{O}(V_{T})$ \\ \hline
\cite{wan2023non} & Convex & Yes & $\mathcal{O}(\left(dTV_{T}\right)^{\frac{1}{2}})$ \\ \hline
\cite{pun2024online} & Quasar-Convex & No & $\mathcal{O}(T^{\frac{1}{2}}V_{T}^{\frac{1}{2}})$ \\ \hline
 \cite{pun2024online} & Weakly Smooth \& Quasar-Convex & No  & $\mathcal{O}(V_{T})$ \\ \hline
\cite{pun2024online} & Strongly Quasar-Convex & No & $\mathcal{O}(V_{T})$ \\ \hline
Our work & Quasar-Convex & Yes &  $\mathcal{O}(d T^{\frac{1}{2}}V_{T}^{\frac{1}{2}})$ \\ \hline
Our work & Weakly Smooth \& Quasar-Convex  & Yes &  $\mathcal{O}(dV_{T})$ \\ \hline
\end{tabular} 
\caption{Contribution and Comparison of non-stationary regret with related works}
\label{tab:regret_bounds}
\end{table}

\section{Preliminaries}\label{sec:back}
Let $f\colon \RR^{p} \to \RR$ be a real-valued function. We denote by $S_{\lambda}(f) := \{x \in \RR^{p} \mid f(x) \leq \lambda\}$ the sublevel set of $f$ at height $\lambda \in \RR$ and by $\argmin\limits_{\XX} f$ the set of all \cris{global minimizers of $f$ on $\XX$. 
Let $U\in\RR^{p}$ be a nonempty open set. Let a function $f\colon \RR^{p}\mapsto\RR$ such that at each point of the set $U$ all partial derivatives continuous. Then we say that $f$ is of the class $\mathcal{C}_{1}$ on $U$. The set of all these functions is denoted by $\mathcal{C}_{1}(U)$.}\\  \\
\noindent A function $f$ is said to be:
\begin{itemize}
 \item[$(a)$] (strongly) convex with modulus $\gamma \geq 0$ if, given any $(x, y) \in \RR^{p}\times\RR^{p}$, we have
 \begin{equation}\label{strong:convex}
  f(\lambda y+(1-\lambda)x) \leq \lambda f(y)+(1-\lambda) f(x) -
  \lambda (1-\lambda) \frac{\gamma}{2}\|x-y\|^{2},
 \end{equation}

 \item[$(c)$] (strongly) quasiconvex with modulus $\gamma \geq 0$ if, given any 
 $(x, y) \in \RR^{p}\times\RR^{p}$, we have
\begin{equation}\label{strong:quasiconvex}
  f(\lambda y+(1-\lambda)x) \leq \max \{f(y), f(x)\}-\lambda(1 -
  \lambda) \frac{\gamma}{2}\|x-y\|^{2}.
 \end{equation}
 \end{itemize}
When $\gamma = 0$, \cris{we simply say} that the function $f$ is convex (resp. quasiconvex).
Furthermore, note that every (strongly) convex function is (strongly) quasiconvex, 
while the reverse statements does not hold in general (see \cite{CM-Book,HKS,lara2022strongly}).
Let us recall the following results.
\medskip
\begin{lemma}\label{exist:unique} {\rm (\cite[Corollary 3]{lara2022strongly})}
 Let $\XX \subseteq \RR^{p}$ be a closed and convex set and $f\colon \XX \rightarrow \RR$ be a lsc and strongly qua\-si\-con\-vex function on $\XX$ with modulus $\gamma> 0$. Then $\argmin\limits_{\XX}\,f$ is a singleton.
\end{lemma}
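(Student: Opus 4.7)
The plan is to establish existence and uniqueness of the minimizer separately. Uniqueness will be the easy half: if $x_{1} \neq x_{2}$ both attain $\min_{K} f$, then $(x_{1}+x_{2})/2 \in K$ by convexity of $K$, and the choice $\lambda = 1/2$ in \eqref{strong:quasiconvex} gives
$$f\!\left(\tfrac{x_{1}+x_{2}}{2}\right) \leq \min_{K} f - \tfrac{\gamma}{8}\|x_{1}-x_{2}\|^{2} < \min_{K} f,$$
a contradiction. Notice that this step relies neither on lsc nor on closedness of $K$.

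For existence I would first prove that every sublevel set $S_{\alpha}(f) \cap K$ is bounded, and then appeal to the direct method: because $\RR^{p}$ is finite dimensional and $K$ is closed, bounded sublevel sets of a lsc function restricted to $K$ are compact, so $f$ attains its infimum on any non-empty such set (and in particular this infimum is finite). To prove the level-boundedness, I would fix a reference point $\bar{x} \in K$ and some $\alpha \geq f(\bar{x})$, and suppose by contradiction that there is a sequence $(x_{n}) \subset K \cap S_{\alpha}(f)$ with $\|x_{n}-\bar{x}\| \to \infty$. Setting $\lambda_{n} := 1/\|x_{n}-\bar{x}\| \in (0,1]$ eventually, the convex combination $y_{n} := \lambda_{n} x_{n} + (1-\lambda_{n})\bar{x}$ lies on the unit sphere around $\bar{x}$, hence in the compact set $\overline{B}(\bar{x},1) \cap K$. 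Applying \eqref{strong:quasiconvex} will yield
$$f(y_{n}) \leq \max\{f(x_{n}), f(\bar{x})\} - \tfrac{\gamma}{2}\lambda_{n}(1-\lambda_{n})\|x_{n}-\bar{x}\|^{2} \leq \alpha - \tfrac{\gamma}{2}(1-\lambda_{n})\|x_{n}-\bar{x}\| \to -\infty,$$
while lower semicontinuity of $f$ on the compact set $\overline{B}(\bar{x},1) \cap K$ forces $f$ to be bounded below there, providing the required contradiction.

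The hard part will be precisely this level-boundedness step: it is where the \emph{strong} part of strong quasiconvexity is essential, since the quadratic penalty $\|x_{n}-\bar{x}\|^{2}$ must overpower the $\lambda_{n}(1-\lambda_{n}) \sim 1/\|x_{n}-\bar{x}\|$ scaling to produce the decisive linear-in-distance term; finite-dimensionality of the ambient space is also used, through the equivalence of closedness-plus-boundedness with compactness. Once existence is in hand, the one-line midpoint argument from the first paragraph finishes the proof that $\argmin_{K} f$ is a singleton.
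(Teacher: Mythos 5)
Your proof is correct: the midpoint argument with $\lambda=1/2$ in \eqref{strong:quasiconvex} gives uniqueness, and your contradiction argument (using $\lambda_{n}=1/\|x_{n}-\bar{x}\|$ so that the quadratic penalty produces a term growing linearly in $\|x_{n}-\bar{x}\|$, against lower boundedness of the lsc function $f$ on the compact set $\overline{B}(\bar{x},1)\cap K$) correctly yields boundedness of the sublevel sets, after which closedness of $K$, lower semicontinuity and finite dimensionality give existence by the direct method. Note that the paper itself offers no proof of this lemma — it is quoted from \cite[Corollary 3]{Lara-9} — and the argument there runs along essentially the same lines (coercivity of strongly quasiconvex functions, then Weierstrass for existence and the midpoint inequality for uniqueness), so your proposal matches the intended route rather than constituting a genuinely different one.
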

\noindent Furthermore, the unique minimizer $x^{*} \in \XX$ of the strongly quasiconvex function $f$ satisfies a quadratic growth condition (see 
\cite{nam2024strong}):
\begin{equation}\label{qgc}
 f(x^{*}) + \frac{\gamma}{4} \|x^{*}-y\|^{2} \leq f(y), ~ \forall ~ y \in \XX.
\end{equation}

\noindent For differentiable strongly quasiconvex functions, we have the following characterization \cite{VNC-2}.
\medskip
\begin{lemma}\label{char:gradient} {\rm (\cite[Theorems 1 and 6]{VNC-2})}
 Let $\XX \subseteq \RR^{p}$ be a convex and open set and $f\colon \XX \rightarrow 
 \RR$ be a differentiable function. Then $f$ is strongly quasiconvex if and only if 
 there exists $\gamma \geq 0$ such that for every $(x, y) \in \XX^{2}$, we have
\begin{equation*}
  f(x) \leq f(y) ~ \Longrightarrow ~ \scal{\nabla f(y)}{x-y}
  \leq -\frac{\gamma}{2}\|x-y\|^{2}.
 \end{equation*}
\end{lemma}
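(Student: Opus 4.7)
My plan is to prove the two implications separately, treating the backward direction as the substantive part.

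For the forward direction, I start from the defining inequality \eqref{strong:quasiconvex} of strong quasiconvexity applied to $x,y \in K$ with $f(x)\le f(y)$ and $\lambda \in (0,1)$, namely
$$f\bigl(y+\lambda(x-y)\bigr) \;\le\; f(y) \;-\; \tfrac{\gamma}{2}\lambda(1-\lambda)\|x-y\|^2.$$
Dividing by $\lambda$ and letting $\lambda \downarrow 0^+$, the difference quotient converges to $\scal{\nabla f(y)}{x-y}$ by differentiability and the right-hand side converges to $-\tfrac{\gamma}{2}\|x-y\|^2$, giving the claimed gradient inequality.

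For the backward direction, fix $x,y \in K$ with $f(x) \le f(y)$ (WLOG), set $z_\lambda := y+\lambda(x-y)$, and introduce the auxiliary function $\theta(\lambda) := f(z_\lambda) + \tfrac{\gamma}{2}\lambda(1-\lambda)\|x-y\|^2$ on $[0,1]$. Strong quasiconvexity is equivalent to $\theta(\lambda) \le f(y) = \theta(0)$ on $[0,1]$. The boundary behavior is under control: $\theta(1) = f(x) \le f(y)$, and the hypothesis applied to the pair $(x,y)$ gives $\theta'(0) \le 0$. I argue by contradiction: if $\theta$ ever exceeds $f(y)$, then it attains its maximum at some interior $\lambda^\ast \in (0,1)$ with $\theta'(\lambda^\ast) = 0$, i.e., $\scal{\nabla f(z_{\lambda^\ast})}{x-y} = -\tfrac{\gamma}{2}(1-2\lambda^\ast)\|x-y\|^2$. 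If $f(z_{\lambda^\ast}) \ge f(y)$, the hypothesis with $u=y,\,v=z_{\lambda^\ast}$ (using $z_{\lambda^\ast}-y = \lambda^\ast(x-y)$) yields $\scal{\nabla f(z_{\lambda^\ast})}{x-y} \ge \tfrac{\gamma}{2}\lambda^\ast\|x-y\|^2$; combined with the preceding identity this forces $\lambda^\ast \ge 1$, a contradiction. Symmetrically, $f(z_{\lambda^\ast}) \ge f(x)$ with the pair $(x,z_{\lambda^\ast})$ yields $\lambda^\ast \le 0$. Both subcases are dispatched.

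The main obstacle is the remaining ``deep valley'' situation $f(z_{\lambda^\ast}) < \min\{f(x),f(y)\} = f(x)$. Here my plan is first to extract quasiconvexity of $f$ along the segment from the gradient hypothesis alone (the same maximum argument, applied to $\varphi(\lambda) := f(z_\lambda)$, excludes an interior strict maximum), so that $\varphi$ is unimodal on $[0,1]$ with a single valley at some $\lambda_m$. For each $\sigma$ on the decreasing branch $[0,\lambda_m]$ the intermediate value theorem supplies a matching point $\sigma^\ast \in [\lambda_m,1]$ with $\varphi(\sigma^\ast)=\varphi(\sigma)$; applying the gradient hypothesis to this equal-value pair yields the sharp one-sided estimate
$$\varphi'(\sigma) \;\le\; -\tfrac{\gamma}{2}(\sigma^\ast - \sigma)\|x-y\|^2,$$
with the symmetric estimate $\varphi'(\sigma) \ge \tfrac{\gamma}{2}(\sigma - \sigma^\ast)\|x-y\|^2$ on the increasing branch. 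Integrating these matching-point estimates across the valley produces precisely the quadratic gap $f(y) - \varphi(\lambda^\ast) \ge \tfrac{\gamma}{2}\lambda^\ast(1-\lambda^\ast)\|x-y\|^2$, which contradicts $\theta(\lambda^\ast) > f(y)$. I expect the matching-point refinement to be the technical crux, since the naive one-sided estimate using only the valley location $\lambda_m$ (rather than $\sigma^\ast$) is off by a factor of two and fails to close the gap.
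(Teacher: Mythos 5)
First, note that the paper does not prove this lemma at all: it is quoted from \cite[Theorems 1 and 6]{VNC-2}, so there is no in-paper argument to compare against and your attempt has to stand on its own. The necessity direction (difference quotient along the segment) is correct, and in the sufficiency direction the architecture built on the auxiliary function $\theta$, its interior maximizer $\lambda^{*}$, and the two subcases $f(z_{\lambda^{*}})\ge f(y)$ and $f(z_{\lambda^{*}})\ge f(x)$ is sound (for $\gamma>0$ and $x\ne y$; with $\gamma=0$ your strict-inequality contradictions degenerate and one is back to the classical Arrow--Enthoven argument, which needs the constancy-on-components reasoning rather than a single stationary point).

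The genuine gap is exactly at the point you call the crux, the deep-valley case $f(z_{\lambda^{*}})<f(x)$. Two concrete problems. (i) The matching point does not always exist: for $\sigma$ on the decreasing branch with $\varphi(\sigma)>f(x)=\varphi(1)$ (which happens near $\sigma=0$ whenever $f(x)<f(y)$), the value $\varphi(\sigma)$ is not attained on the increasing branch, so the IVT gives no $\sigma^{*}$; this is repairable by comparing with the endpoint $1$ itself, since $\varphi(1)\le\varphi(\sigma)$ yields $\varphi'(\sigma)\le-\tfrac{\gamma}{2}(1-\sigma)\|x-y\|^{2}$. (ii) Much more seriously, the assertion that ``integrating these matching-point estimates across the valley produces precisely the quadratic gap'' is not a proof and does not follow by direct integration. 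Writing $f(y)-\varphi(\lambda^{*})=-\int_{0}^{\lambda^{*}}\varphi'(\sigma)\,d\sigma$ and inserting your estimate only gives $\tfrac{\gamma}{2}\|x-y\|^{2}\int_{0}^{\lambda^{*}}(\sigma^{*}(\sigma)-\sigma)\,d\sigma$, and for an asymmetric valley (minimizer located early, steep rise afterwards) the matching points $\sigma^{*}(\sigma)$ sit well below $1$, so this integral need not dominate $\lambda^{*}(1-\lambda^{*})$; the symmetric integration over the mirror interval on the increasing branch fails in the opposite regime, and there are width/position configurations of the sublevel set at level $\varphi(\lambda^{*})$ for which both one-branch integrations fall short of the target. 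To close this case one needs ingredients your sketch never invokes: the stationarity $\theta'(\lambda^{*})=0$ must be used again here (together with the matching estimate it forces the sublevel set of $\varphi$ at level $\varphi(\lambda^{*})$ to have width at most $|1-2\lambda^{*}|$), and the two branches must be coupled quantitatively, e.g.\ through the level-set width function $w(v)=\beta(v)-\alpha(v)$, whose derivative in $v$ is controlled by both one-sided slope bounds and which, integrated in the level variable, converts depth into the missing quadratic term. Without some such two-branch argument the deep-valley case, and hence the sufficiency direction, remains unproved.
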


\noindent The previous result extends the well-known characterization for 
differentiable quasiconvex functions given by \cite{arrow1961quasi} 
($\gamma=0)$.\\ 
\noindent Before continuing, we present some examples of strongly quasiconvex functions that are not (necessarily) convex. 

\medskip

\begin{remark}\label{rem:exam}
 \begin{enumerate}
  \item \cris{Let $f\colon \RR^{p} \rightarrow \RR$ be given 
   by $f(x) = \|x\|^{\alpha}$, with $\alpha \in \,]0, 1[$. Clearly, $f$ is non-convex, but it is strongly quasiconvex on any $\mathbb{B} (0, r)$, $r > 0$, by \cite[Corollary 3.9]{nam2024strong} ($0<\alpha<1$).}

  \item\label{3.3 (ii)} Let $A, B \in \RR^{p\times p}$, $a, b \in 
  \RR^{p}$, $\alpha, \beta \in \RR$, and $f\colon 
  \RR^{p} \rightarrow \RR$ be the functions given by:
  \begin{align}\label{ex:f3}
   f(x)=\frac{\ft(x)}{\fh(x)}=\frac{\frac{1}{2} \scal{Ax}{x}+\scal{a}{x}+\alpha}{\frac{1}{2} \scal{Bx}{x}+\scal{b}{x}+\beta}.
  \end{align}
  Take $0 < m < M$ and define:
  $$K := \{x \in \RR^{p}\mid ~ m \leq \fh(x) \leq M\}.$$ 
   If $A$ is a positive definite matrix and at least one of the following 
   conditions holds:
  \begin{enumerate}
   \item[$(a)$] $B=0$ (the null matrix),
 
   \item[$(b)$] $\ft$ is nonnegative on $K$ and $B$ is negative semi-definite,
 
   \item[$(c)$] $\ft$ is non-positive on $K$ and $B$ is positive semi-definite,
 \end{enumerate}
  then $f$ is strongly quasiconvex on $K$ with modulus $\gamma =
  \frac{\sigma_{\min} (A)}{M}$ by \cite[Proposition 4.1]{iusem2024two}, where 
  $\sigma_{\min} (A)$ is the minimum eigenvalue of $A$.

  \item Let $\ft, \fh\colon \RR^{p} \rightarrow \RR$ 
  be two strongly quasiconvex functions with modulus $\gamma_{1}, 
  \gamma_{2} > 0$, respectively. Then $f := \max\{\ft, \fh\}$ is 
  strongly quasiconvex with modulus $\gamma := \min\{\gamma_{1}, 
  \gamma_{2}\} > 0$ (see \cite{VNC-2}).


  \item If $A: \RR^{p} \rightarrow \RR^{p}$ is a linear operator and $f$ is a strongly quasiconvex function with modulus $\gamma \geq 0$, then $g := f \circ A$ is strongly quasiconvex with modulus $\gamma \sigma_{\min}(A) \geq 0$ \cite{grad2025strongly} (where $\sigma_{\min}(A)$ is the minimum positive eigenvalue of $A$).
 \end{enumerate}
\end{remark}

\medskip

\begin{definition}(See, for instance,\cite[Definition 1]{hinder2020near})
Let $\kappa \in ]0, 1]$ and $f \colon \RR^{p} \to \RR$ be a continuously differentiable function. 
Then $f$ is said to be $(\kappa, \gamma)$-strongly quasar-convex with respect to $x^{*} \in \argmin\limits_{\XX}\,f$ on $\XX$ if
\begin{align*}
 f(x^{*}) \geq f(x) + \frac{1}{\kappa} \scal{\nabla f(x)}{x^{*} - x} + \frac{
 \gamma}{2} \|x^{*} - x\|^{2}, ~ \forall ~ x \in \XX. 
\end{align*}
When $\gamma = 0$, \cris{we simply say} that $f$ is $\kappa$-quasar-convex on $\XX$. 
\end{definition}

\medskip
\noindent  If $\kappa=1$, then a function is $1$-quasar-convex if and only if it is star-convex (see \cite{nesterov2006cubic}). In particular, every strongly convex function is $(1, \gamma)$-strongly qua\-sar-convex ($\gamma \geq 0$). \cris{Similarly, a function is $(1, \gamma)$-quasar-convex if and only if it is strongly quasiconvex (see \cite{necoara2019linear}).} These notions are variants of convexity and strong convexity, respectively, where the conditions are required to hold only at a minimizer $x^{*}$, rather than for all points in $\RR^{p}$. 
We recall a characterization of quasar-convexity.

\medskip

\begin{lemma}\cite[Lemma 10]{hinder2020near}\label{L2}
Let $f \colon \XX \to \RR$ be continuously differentiable  function with minimizer $x^{*} \in \argmin\limits_{\XX}\,f$, where the domain $\XX \subseteq \RR^{p}$ is open and convex. Then, $f$ is quasar-convex with modulus $\kappa \in ]0, 1]$ if and only if
\begin{align*}
f(\lambda x^{*}+(1-\lambda)x)  \leq \kappa \lambda f(x^{*})+(1-\kappa \lambda)f(x), ~ \forall ~ x \in \XX, ~ \forall ~ \lambda \in [0, 1].
\end{align*}
\end{lemma}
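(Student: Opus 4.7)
The plan is to reduce to a one-dimensional argument along the segment from $x$ to $x^{*}$. Define $\phi(\lambda) := f(\lambda x^{*} + (1-\lambda) x)$ for $\lambda \in [0,1]$; then $\phi(0) = f(x)$, $\phi(1) = f(x^{*})$, and $\phi'(\lambda) = \scal{\nabla f(x_{\lambda})}{x^{*} - x}$, where $x_{\lambda} := \lambda x^{*} + (1-\lambda) x \in \XX$ by convexity.

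For the direct implication, I would apply the defining $\kappa$-quasar-convexity inequality at the base point $x_{\lambda}$. Since $x^{*} - x_{\lambda} = (1-\lambda)(x^{*} - x)$, this reads
\begin{equation*}
f(x^{*}) - \phi(\lambda) \;\geq\; \frac{1-\lambda}{\kappa}\,\phi'(\lambda).
\end{equation*}
Setting $v(\lambda) := \phi(\lambda) - f(x^{*}) \geq 0$ (as $x^{*}$ is a minimizer), this rearranges to the differential inequality $(1-\lambda)\,v'(\lambda) + \kappa\, v(\lambda) \leq 0$ on $[0,1)$. Multiplying by the integrating factor $(1-\lambda)^{-\kappa-1}$ shows that $\lambda \mapsto v(\lambda)(1-\lambda)^{-\kappa}$ is non-increasing, so $v(\lambda) \leq v(0)(1-\lambda)^{\kappa}$. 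Combining this with Bernoulli's inequality $(1-\lambda)^{\kappa} \leq 1 - \kappa\lambda$ for $\kappa \in (0,1]$ and $\lambda \in [0,1]$, together with $v(0) = f(x) - f(x^{*}) \geq 0$, yields $\phi(\lambda) \leq f(x^{*}) + (1-\kappa\lambda)(f(x) - f(x^{*})) = \kappa\lambda f(x^{*}) + (1-\kappa\lambda) f(x)$; the endpoint $\lambda = 1$ follows directly from $f(x^{*}) \leq f(x)$.

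For the converse, I would rearrange the assumed inequality as $\phi(\lambda) - \phi(0) \leq \kappa\lambda\,(f(x^{*}) - f(x))$, divide by $\lambda > 0$, and let $\lambda \to 0^{+}$. This yields $\scal{\nabla f(x)}{x^{*} - x} = \phi'(0) \leq \kappa\,(f(x^{*}) - f(x))$, which is precisely the definition of $\kappa$-quasar-convexity at $x$ after dividing by $\kappa$; since $x \in \XX$ was arbitrary, the conclusion holds on all of $\XX$.

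The main technical step is the differential-inequality argument: choosing the integrating-factor formulation (rather than dividing by $v$) avoids having to treat separately the degenerate case where $v$ vanishes in the interior of $[0,1)$. The appeal to Bernoulli's inequality is what converts the sharper bound $v(0)(1-\lambda)^{\kappa}$ into the linear-in-$\lambda$ bound stated in the lemma; without it one would actually obtain a slightly stronger conclusion, which confirms that the stated inequality is indeed necessary and not just sufficient.
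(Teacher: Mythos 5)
Your proof is correct. Note that the paper does not prove this lemma at all — it is quoted verbatim from \cite[Lemma 11]{hinder2020near} — so there is no in-paper argument to compare against; your write-up is a sound, self-contained derivation in the same spirit as the cited source: reduce to the scalar function $\phi(\lambda)=f(\lambda x^{*}+(1-\lambda)x)$, observe that quasar-convexity applied at $x_{\lambda}$ gives $(1-\lambda)v'(\lambda)+\kappa v(\lambda)\le 0$ for $v=\phi-f(x^{*})\ge 0$, integrate via the monotonicity of $v(\lambda)(1-\lambda)^{-\kappa}$, and pass from $(1-\lambda)^{\kappa}$ to $1-\kappa\lambda$ by Bernoulli (valid since $\kappa\in(0,1]$ and $v(0)\ge 0$), with $\lambda=1$ handled by $f(x^{*})\le f(x)$; the converse by dividing by $\lambda$ and letting $\lambda\to 0^{+}$ recovers $\scal{\nabla f(x)}{x^{*}-x}\le\kappa\left(f(x^{*})-f(x)\right)$ exactly. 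The integrating-factor formulation is a clean way to avoid dividing by a possibly vanishing $v$, and your closing remark that the argument in fact yields the stronger bound $v(\lambda)\le v(0)(1-\lambda)^{\kappa}$ is accurate.
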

\medskip

\begin{proposition}\label{prop1}\cris{
Let $\ft \colon \RR \to \RR$ be a $\kappa$-quasar-convex function with respect to the point $x=0$ on $\RR$, such that $\min_{x \in \RR} \ft(x) = \ft(0) = 0,$ and assume $\ft \in \mathcal{C}^{1}$. Let $\fh \colon \mathcal{S}^{p-1} \to \RR$ be a nonnegative function that is continuously differentiable on the unit sphere $\mathcal{S}^{p-1} \subset \RR^{p}$. We define the function $f$ as $f(x)=\ft(\|x\|) \fh\left(\frac{x}{\|x\|}\right)$ with $f(0)=0.$
Then $f$ is nonnegative, of class $\mathcal{C}^{1}$ on $\RR^{p}\setminus\{0\}$, and $\kappa$-quasar-convex on $\RR^{p}$ with respect to $x^{*}=0$. 
Moreover, for every $x \in \RR^{p}\setminus\{0\},$ the gradient of $f$ is given by:}
\begin{align*}
\nabla f(x)=\frac{x}{\|x\|} \ft'(\|x\|) \fh\left(\frac{x}{\|x\|}\right)+\frac{1}{\|x\|} \left(\Id-\frac{xx^\top}{\|x\|^{2}}\right) \ft(\|x\|) \nabla \fh\left(\frac{x}{\|x\|}\right).
\end{align*}
\end{proposition}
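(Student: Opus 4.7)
The plan is to handle the four claims (non-negativity, $\mathcal{C}^1$ regularity, the gradient formula, and $\kappa$-quasar-convexity at the origin) in a way that takes advantage of the radial/tangential decomposition built into the product form $f(x)=f_1(\|x\|)f_2(x/\|x\|)$.

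First I would dispatch the easy facts. Since $0$ is a global minimizer of $f_1$ with $f_1(0)=0$, we have $f_1\ge 0$ on $\RR$, and combined with $f_2\ge 0$ on $\mathcal{S}^{d-1}$ this gives $f\ge 0$. For $x\neq 0$, the claimed formula for $\nabla f(x)$ follows from the product rule together with $\nabla\|x\|=x/\|x\|$ and the Jacobian identity $D_x(x/\|x\|)=\frac{1}{\|x\|}\bigl(\Id-\frac{xx^\top}{\|x\|^2}\bigr)$, after observing that this matrix is symmetric so transposing it when passing to the gradient of $f_2(x/\|x\|)$ changes nothing.

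For the $\mathcal{C}^1$ property at the origin, the key is that differentiability of $f_1$ together with $f_1(0)=\min f_1=0$ forces $f_1'(0)=0$, hence $f_1(r)=o(r)$ as $r\to 0$. Because $f_2$ and $\nabla f_2$ are bounded on the compact sphere, this yields $f(x)/\|x\|\to 0$ as $x\to 0$, giving differentiability at $0$ with $\nabla f(0)=0$. Continuity of $\nabla f$ at $0$ follows from the two bounds $|f_1'(\|x\|)|\to |f_1'(0)|=0$ (first term) and $f_1(\|x\|)/\|x\|\to 0$ (second term); both terms in the stated formula therefore vanish as $x\to 0$.

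The heart of the proof is the quasar-convexity inequality at $x^*=0$,
\[
f(0)\ge f(x)+\tfrac{1}{\kappa}\scal{\nabla f(x)}{0-x},\qquad\text{i.e.}\qquad \scal{\nabla f(x)}{x}\ge \kappa f(x).
\]
Here the chosen decomposition pays off: taking the inner product of the gradient formula with $x$, the second term vanishes because $\bigl(\Id-xx^\top/\|x\|^2\bigr)x=0$, leaving exactly
\[
\scal{\nabla f(x)}{x}=\|x\|\,f_1'(\|x\|)\,f_2\!\left(\tfrac{x}{\|x\|}\right).
\]
Applying the one-dimensional $\kappa$-quasar-convexity of $f_1$ at $0$ with the point $r=\|x\|$, i.e. $0=f_1(0)\ge f_1(r)-\tfrac{r}{\kappa}f_1'(r)$, gives $r f_1'(r)\ge \kappa f_1(r)$ for $r\ge 0$; multiplying by the non-negative factor $f_2(x/\|x\|)$ yields the desired estimate $\scal{\nabla f(x)}{x}\ge\kappa f(x)$, and the case $x=0$ is trivial since both sides of the quasar-convexity inequality equal $0$.

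The only step I expect to require care is the $\mathcal{C}^1$ claim at the origin: one must explicitly use $f_1'(0)=0$ (a consequence of minimality plus differentiability, not stated as a hypothesis) to control both the differentiability quotient and the limit of $\nabla f(x)$ as $x\to 0$. The rest is mechanical once the radial/tangential split is in place.
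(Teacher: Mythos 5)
Your proposal is correct, and it differs from the paper's proof in a substantive way: the paper disposes of the quasar-convexity claim by citing an external result (Proposition 8 of the Hermant--Aujol--Dossal--Rondepierre preprint \cite{hermant2024study}), whereas you verify it directly. Your verification is clean: pairing the gradient formula with $x$ kills the tangential term because $\bigl(\Id-\tfrac{xx^\top}{\|x\|^{2}}\bigr)x=0$, leaving $\scal{\nabla f(x)}{x}=\|x\|\,f_{1}'(\|x\|)\,f_{2}(x/\|x\|)$, and the scalar inequality $r f_{1}'(r)\geq\kappa f_{1}(r)$ (the one-dimensional quasar-convexity of $f_{1}$ at $0$) then gives $\scal{\nabla f(x)}{x}\geq\kappa f(x)$ after multiplying by $f_{2}\geq 0$; since $f\geq 0=f(0)$, the point $0$ is indeed a global minimizer of $f$, so this is exactly the required inequality. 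You are also more careful than the paper about the $\mathcal{C}^{1}$ claim: the paper's one-line assertion that $f$ is $\mathcal{C}^{1}$ because $f_{1}$ and $f_{2}$ are does not by itself cover the origin, where $x/\|x\|$ is undefined; your observation that interior minimality forces $f_{1}'(0)=0$, hence $f_{1}(r)=o(r)$, and that compactness of the sphere bounds $f_{2}$ and $\nabla f_{2}$, is precisely what makes $\nabla f(0)=0$ exist and $\nabla f$ continuous there. In short, your route is self-contained and elementary where the paper delegates the key step to a citation, at the cost of a somewhat longer argument; the paper's route is shorter but leaves the behavior at $x=0$ implicit.
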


\begin{proof}
Applying the product rule component-wise, we obtain:
\begin{align*}
\nabla f(x)=&\nabla_x \ft(\|x\|) \fh\left(\frac{x}{\|x\|}\right)+\ft(\|x\|) \nabla_x \fh\left(\frac{x}{\|x\|}\right)\\=&\frac{x}{\|x\|} \ft'(\|x\|) \fh\left(\frac{x}{\|x\|}\right)+\frac{1}{\|x\|} \left(\Id-\frac{xx^\top}{\|x\|^{2}}\right) \ft(\|x\|) \nabla \fh\left(\frac{x}{\|x\|}\right).
\end{align*}
Since both $\ft$ and $\fh$ are $\mathcal{C}_1$, it follows that $f(x)$ is also $\mathcal{C}_1$ on $\RR^{p}\setminus\{0\}$. The non-negativity of $\ft$ and $\fh$ implies that $f$ is nonnegative as well. The quasar-convexity of $f$ follows from \cite[Proposition 8]{hermant2024study}.
\hfill
\end{proof}

\medskip

\begin{example}\label{ex1} 
Consider the function $f \colon \RR^{2} \rightarrow \RR$, defined as: 
\begin{align*} 
f(x)=\ft(\|x\|) \fh \left( \frac{x}{\|x\|} \right), 
\end{align*} 
where $\ft(t) = \frac{t^{2}}{1+t^{2}}$ and $\fh(x_1, x_2)=\frac{1}{4N} \sum\limits_{i=1}^{N} \left( a_{i} \sin(b_{i} x_1)^{2}+c_{i} \cos(d_{i} x_2)^{2} \right)$,
with $N=10$. Here, ${a_{i}}$ and ${c_{i}}$ are independently and uniformly 
distributed on $[0, 20]$, while ${b_{i}}$ and ${d_{i}}$ are independently and 
uniformly distributed on $[-25, 25]$. The function is illustrated on the left side of 
Figure \ref{fig1}. 
\medskip

\noindent \cris{Since $\ft$ is $\tfrac{2}{1+R^{2}}$-quasar-convex on $[-R,R]$, by Proposition \ref{prop1}  (see Example \ref{Example 5}), the function $f$ is also $\tfrac{1}{1+R^{2}}$-quasar-convex on the ball centered at the origin with radius $R$, for any $\fh$ that satisfies the assumptions of Proposition \ref{prop1}. Note that this construction is similar to the one introduced in \cite{hermant2024study}, with the main difference being that here $\ft$ is only quasar-convex and $\fh$ is nonnegative, while in their case $\hat{\ft}_{1}(t)=t^{2}$ is strongly convex and $\hat{\fh}_{2}=\fh+1$ is lower bounded by $1$. Moreover, for any fixed direction, meaning that $c=\tfrac{x}{\|x\|}$ is constant, the function behaves like $\fh(c)\cdot \ft(\|x\|)$. However, since $\fh$ is a linear combination of high-frequency trigonometric functions, it can exhibit highly non-convex behavior. This is evident when examining the level set of a segment between two arbitrary points $u$ and $v$ that does not contain the minimizer, as shown on the left side of Figure \ref{fig1}.}

\end{example}

\begin{figure}[htbp] \centering \includegraphics[width=0.9\linewidth]{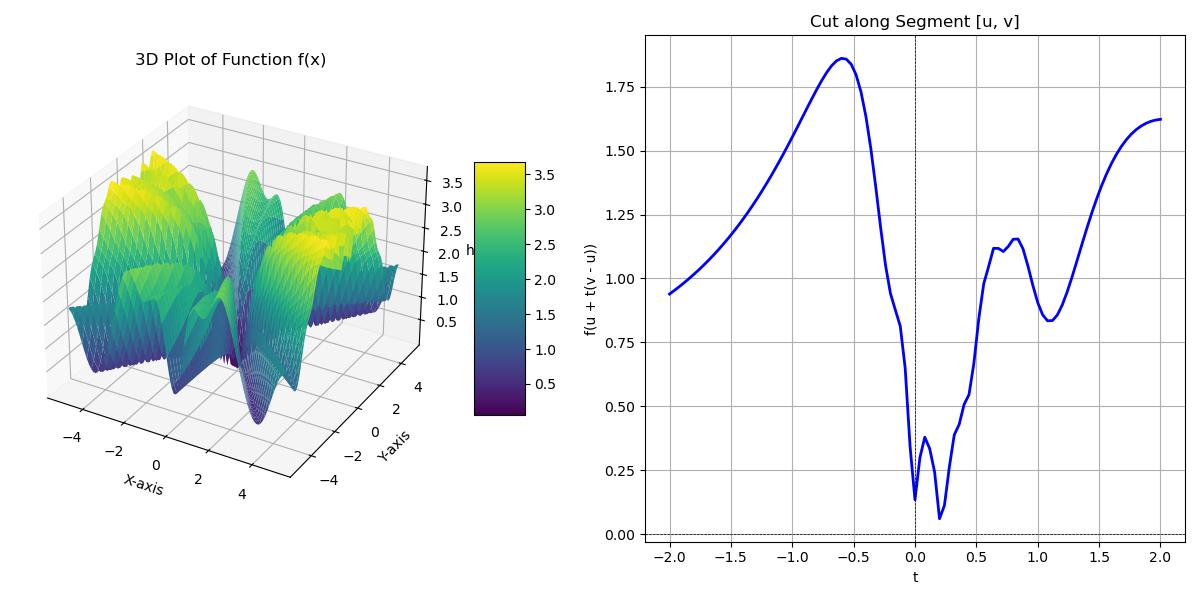} \caption{An illustration of the non-convex function $f(x)$ described in Example \ref{ex1}. On the left-hand side, a 3D plot of the function $f(x)$ is displayed, while on the right-hand side, an arbitrary segment that does not contain the minimizer is shown, highlighting the non-convexity of $f$.}\label{fig1} \end{figure}
\medskip

\noindent A differentiable function $f\colon \RR^{p} \to \RR$ is said to be (\cite[Definition 1]{hinder2020near}) $\Gamma$-weakly smooth with respect to $x^{*} \in \argmin\limits_{x\in\RR^{p}} f(x),$ if for any $x \in \RR^{p}$, we have 
$$\|\nabla f(x)\|^{2} \leq \Gamma \left(f(x)-f(x^{*})\right).$$

\noindent \cris{For every positive integer $\ell > 0$, we define $[\ell] := \{1, \ldots, \ell\}$.} The following lemma provides a method to interchange the limits of a double sum, and will be applied in the proof of Theorem \ref{theo:02}.

\medskip
\begin{lemma}\label{L:change}
Let $d,T\in\mathbb{N}$ with $d\ge 1$, and let $\{a_k\}_{k=1}^{T}$ be a sequence of nonnegative numbers. Then
\begin{equation}\label{eq:change}
\sum\limits_{t=1}^{T-1} \sum\limits_{k=t+1}^{\min\{T,\,t+d-1\}} a_k
=\sum\limits_{k=2}^{T} \sum\limits_{t=\max\{1,\,k-d+1\}}^{k-1} a_k
\le (d-1)\sum\limits_{k=1}^{T} a_k.
\end{equation}
\end{lemma}
\begin{proof}\cris{
The equality is obtained by changing the order of summation. Indeed, the left-hand side sums $a_k$ over all pairs $(t,k)$ satisfying $1\le t\le T-1,$ $t+1\le k\le \min\{T,t+d-1\}.$ For a fixed $k\in\{2,\dots,T\}$, $t$ are satisfies that $k-d+1\le t\le k-1$ and $1\le t\le T-1$, which is equivalent to
$\max\{1,k-d+1\}\le t\le k-1$, which are the limits of the equality in \eqref{eq:change}.\\}

\noindent \cris{For the inequality, note that for each fixed $k$ the inner sum $\sum\limits_{t=\max\{1,k-d+1\}}^{k-1} a_k$ contains at most $d-1$ constant terms. Hence
$$\sum\limits_{t=\max\{1,k-d+1\}}^{k-1} a_k \le (d-1)\,a_k,$$
and summing this bound over $k=2,\ldots,T$  gives the inequality in \eqref{eq:change}.\hfill}
\end{proof}

\section{Main Results}\label{sec:mr}

\noindent In this section, inspired by the techniques developed in \cite{wan2022online}, we derive non-stationary regret bounds for the iterates generated by DOGD when applied to quasar-convex functions. In each round $t$, DOGD queries the gradient $\nabla f_{t}(x_{t})$, but due to the delay, this gradient is received at the end of round $t'=t+d_{t}-1$ and can be utilized in round $t+d_{t}$, where $d_{t} \in \mathbb{Z}^{+}$ represents a nonnegative integer delay.  DOGD updates the decision $x_{t}$ with the sum of the gradients received by the end of round $t$. The set of rounds whose gradients are received at each round $t$ is denoted by $\FF_{t}=\{ k \in [T] \mid k+d_{k}-1=t \}$, where $[T]:=\{1,\ldots, T\}$. In the setting without delays, for all $t$, $D=T$, $d=d_{t}=1$, $t'=t$, and $\FF_{t}=\{t\}$. The detailed procedure for DOGD is summarized in Algorithm \ref{DOGD-SC}. Specifically, we address cases where the functions are $L$-Lipschitz and $\Gamma$-weakly smooth. 
\begin{algorithm}[ht!]
		\caption{Delayed Projected Online Gradient Descent for quasar-convex functions}
		\begin{algorithmic}[1]
			\State \textbf{Initialization:} Choose an arbitrary vector $x_{1} \in \XX$ and a sequence of step-sizes $\left\{\eta_{t}\right\}_{t=1}^{T}$
			\For{$t=1, 2, \ldots, T$}
			\State Query $r_{t}\approx \nabla f_{t}(x_{t}).$ 
			\State $x_{t+1}=\begin{cases}
				P_{\XX} \left( x_{t}-\eta_{t} \sum\limits_{k \in \FF_{t}} r_{k} \right) & \text{if } |\FF_{t}| > 0 \\
				x_{t} & \text{otherwise}
			\end{cases}$
			\EndFor
		\end{algorithmic}
		\label{DOGD-SC}
\end{algorithm}

\subsection{DOGD for Quasar-Convex Functions with Non-Stationary Regret}	

In the quasar-convex case, the following assumptions will be needed:
\medskip
\begin{assumption}
\begin{enumerate}[label=(B\arabic*)] \
\item\label{B4} The radius of the convex decision set $\XX$ is bounded by $R$, i.e., $\|x\| \leq R$ for all $x \in \XX$.
\item\label{B3} Each loss function $f_{t}$ is $\kappa$-quasar-convex for some $x^{*}_{t}\in \argmin\limits_{x\in \XX}f_{t}(x)$.
\item\label{B2} Each loss function $f_{t} \colon \XX \to \RR$ is $L$-Lipschitz.
\item\label{B5} Each loss function $f_{t} \colon \XX \to \RR$ is $\Gamma$-weakly smooth.
\end{enumerate}
\end{assumption}

\noindent To simplify the notation, for every $t \in [T]$, we define $r_{t}=\nabla f_{t}(x_{t})+m_{t}$, where $m_{t} \in \RR^{p}$ represents the deterministic gradient error. We assume that $m_{t}$ is bounded for some constant $\delta_{t} > 0$. The cumulative gradient error and the cumulative squared gradient over the horizon $T$ are defined as:
\begin{align}
\Delta_{T}=\sum\limits_{t=1}^{T} \delta_{t} \hspace{2mm} \text{and} \hspace{2mm} \Lambda_{T}=\sum\limits_{t=1}^{T} \delta^{2}_{t},\nonumber
\end{align}
respectively.
Additionally, we define 
$$x'_{t+1}=\begin{cases}
		x_{t}-\eta_{t}\sum\limits_{k \in \FF_{t}} r_{k}
  & \text{if } |\FF_{t}| > 0 \\
		x_{t} & \text{otherwise}
	\end{cases}$$
	which the term inside of the projection. Then, by definition, for every $t\in [T+d-1],$ it holds that
	\begin{align}\label{iterates}
\eta_{t}\sum\limits_{k\in\FF_{t}}r_{k}=(x_{t}-x'_{t+1}).
	\end{align}
 		According to Algorithm \ref{DOGD-SC}, there could exist some feedback that arrives after the round $T$ and is not used to update the decision. However, it is useful for the analysis. Therefore, for $t \in [T+1, T+d-1]$, we also define $\FF_{t}=\{k\in [T] \mid k+d_{k}-1=t\}$, and  the virtual update as:
		$$
		 x_{t+1}=\begin{cases}
			P_\XX \left( x_{t}-\eta_{t} \sum\limits_{k \in \FF_{t}}r_{k} \right) & \text{if } |\FF_{t}| > 0, \\
			x_{t} & \text{otherwise}.
		\end{cases}
		$$
Analogously, for any $t \in [T+1,T+d-1]$, we also 
 define
		$$
		x'_{t+1}=\begin{cases}
			x_{t}-\eta_{t} \sum\limits_{k \in \FF_{t}} r_{k} & \text{if } |\FF_{t}| > 0, \\
			x_{t} & \text{otherwise}.
		\end{cases} 
		$$
\medskip Let us also define $s=\min \left\{ t \in [T+d-1] \mid |\FF_{t}| > 0 \right\}.$ \cris{Observe that $\bigcup_{t=s}^{T+d-1} \FF_{t}=\bigcup_{t=1}^{T+d-1} \FF_{t}$ and by definition of $\FF_{t}$, $\bigcup_{t=s}^{T+d-1} \FF_{t}\subset [T]$. Since all feedback arrives between rounds $s$ up to $T+d-1$, we have that $\bigcup_{t=s}^{T+d-1} \FF_{t}= [T]$. Furthermore, since each feedback is received only once, $\FF_{i} \cap \FF_j=\emptyset$ for any $i \neq j$. Combining these results, for every finite sequence $\left\{a_{k}\right\}_{k=1}^{T}$ of real numbers, we have:\begin{align}\label{sets}
\sum\limits_{t=1}^{T} a_{t}=\sum\limits_{t=s}^{T+d-1} \sum\limits_{k \in \FF_{t}} a_{k}.
\end{align}}

\noindent  Our main result is given below.
\medskip

\begin{theorem}\label{theo:02}
Under assumptions \ref{B4}-\ref{B3}, the following assertions hold:
 \begin{enumerate}
     \item\label{T:Lipschitz} If \ref{B2} holds, the stationary regret of Algorithm \ref{DOGD-SC} with a constant step-size $0<\eta_{t}=\eta$ can be upper bounded by:
     \begin{align}\label{eq:R1} R^{NS}_{T}\leq &  \frac{2R^{2}}{\eta\kappa}+\frac{3R+\eta L(d-1)}{\eta\kappa}V_{T}+  \frac{L^{2}d+4(d-1)L^{2}}{2\kappa}\eta T+ \frac{\eta d}{2\kappa}\Lambda_{T}+\frac{\eta dL+2R+2\eta(d-1)L}{\kappa}\Delta_{T} \end{align}
      \item\label{T:Weakly-smooth} If \ref{B5} holds, the stationary regret of Algorithm \ref{DOGD-SC} with a constant step-size $0<\eta_{t}=\eta< \frac{1}{\bar{\alpha}}=\frac{2\kappa}{\left(d+4d^{\frac{1}{2}}(d-1)\right)\Gamma}$ can be upper bounded by:
\begin{align}\label{eq:R2}
R^{NS}_{T} \leq \frac{b^{2}+2ac+b \left(b^{2}+4ac\right)^{\frac{1}{2}}}{2a^{2}},
\end{align}
where $a=1-\alpha\eta>0$, 
$$b= \frac{\left(2R(d-1)\Gamma V_{T}\right)^{\frac{1}{2}}}{\kappa}+\frac{(2d)^{\frac{1}{2}}(d-1)\eta+\eta d}{\kappa}\left(\Gamma \Lambda_{T}\right)^{\frac{1}{2}},\hspace{2mm}\text{ and }\hspace{2mm}c=\frac{2R^{2}}{\kappa}+\frac{3RV_{T}}{\kappa}+ \frac{\eta d}{2\kappa}\Lambda_{T}+ \frac{2R}{\kappa} \Delta_{T}.$$
\end{enumerate}
\end{theorem}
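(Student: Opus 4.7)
My starting point for both parts is the quasar-convexity inequality \ref{B3}, which yields
$R^{NS}_T \le \frac{1}{\kappa}\sum_{t=1}^{T}\langle \nabla f_t(x_t),\, x_t - x^*_t\rangle$. The core idea is then to re-index the sum using \eqref{sets}: writing $\nabla f_t(x_t) = r_t - m_t$ and grouping gradients by the round $\tau$ at which they are fed back, I get
\[
R^{NS}_T \le \frac{1}{\kappa}\sum_{\tau=s}^{T+d-1}\sum_{k\in\FF_\tau}\langle r_k - m_k,\; x_k - x^*_k\rangle.
\]
To exploit the update rule, I insert and subtract $x_\tau$ and $x^*_\tau$, decomposing
$x_k - x^*_k = (x_\tau - x^*_\tau) + (x_k - x_\tau) + (x^*_\tau - x^*_k)$.
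The first piece couples directly to the update, because $\eta\sum_{k\in\FF_\tau} r_k = x_\tau - x'_{\tau+1}$ from \eqref{iterates}, so standard algebra on $\|x'_{\tau+1}-u\|^2$ gives
\[
\sum_{k\in\FF_\tau}\langle r_k,\, x_\tau - x^*_\tau\rangle \le \frac{\|x_\tau - x^*_\tau\|^2-\|x'_{\tau+1}-x^*_\tau\|^2}{2\eta}+\frac{\eta}{2}\Big\|\sum_{k\in\FF_\tau}r_k\Big\|^2,
\]
and projection non-expansiveness replaces $x'_{\tau+1}$ by $x_{\tau+1}$.

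\textbf{Telescoping with path variation.} Summing over $\tau$, the telescoping in the first term would close if the comparators were fixed; since they move, I split
$\|x_{\tau+1}-x^*_\tau\|^2 = \|x_{\tau+1}-x^*_{\tau+1}\|^2 + \text{cross}$,
and bound the cross term via $\|x^*_{\tau+1}-x^*_\tau\| \le $ (path variation increment), using \ref{B4} to bound diameters by $2R$. This produces the $\frac{2R^2}{\eta\kappa}$ term and the $\frac{R}{\eta\kappa}V_T$ terms in \eqref{eq:R1}. The remaining pieces $\langle r_k, x_k - x_\tau\rangle$ and $\langle r_k, x^*_\tau - x^*_k\rangle$ are handled by Cauchy-Schwarz: under \ref{B2} I bound $\|r_k\| \le L + \delta_k$; $\|x^*_\tau - x^*_k\|$ is controlled by $V_T$ between rounds $k$ and $\tau$; and $\|x_k - x_\tau\|$ by telescoping $\|x_{j+1}-x_j\| \le \eta \|\sum_{i\in\FF_j} r_i\|$. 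The gradient-error terms $\langle m_k, \cdot\rangle$ contribute the $\Delta_T$ pieces. To reverse the double summations of the form $\sum_\tau \sum_{k\in\FF_\tau}\sum_{j=k}^{\tau-1}(\cdot)$, which is where delay causes each round to be counted up to $d-1$ times, I invoke Lemma \ref{L:change}; this is the source of every $(d-1)$ and $d$ factor in the bound.

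\textbf{Weakly smooth case.} The structure is identical, but instead of $\|r_k\| \le L+\delta_k$ I use weak smoothness $\|\nabla f_k(x_k)\|^2 \le \Gamma(f_k(x_k)-f_k(x^*_k))$ everywhere a squared gradient norm appears. Writing $\|r_k\|^2 \le 2\|\nabla f_k(x_k)\|^2 + 2\delta_k^2$, the accumulated gradient-norm terms become $\Gamma \sum_k (f_k(x_k) - f_k(x^*_k)) = \Gamma R^{NS}_T$, i.e., the regret reappears on the right-hand side. Collecting all contributions gives, roughly,
\[
\kappa R^{NS}_T \le \alpha\eta\,\kappa R^{NS}_T + b\sqrt{R^{NS}_T} + c,
\]
where $\alpha\eta<1$ by hypothesis (ensuring $a=1-\alpha\eta>0$), the linear coefficient $b$ collects the cross terms in which $\sqrt{\Gamma R^{NS}_T}$ appears after Cauchy-Schwarz (these are the $V_T^{1/2}$ and $\Lambda_T^{1/2}$ pieces), and $c$ gathers the remaining constants. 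Treating this as a quadratic inequality in $\sqrt{R^{NS}_T}$ and solving for the positive root yields exactly $\frac{b^2+2ac+b\sqrt{b^2+4ac}}{2a^2}$, which is \eqref{eq:R2}.

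\textbf{Main obstacle.} The real bookkeeping difficulty is the delay-induced mismatch between the "reference point" $x_\tau$ of the update and the actual evaluation points $x_k$ for $k\in\FF_\tau$. Carefully controlling $\|x_k - x_\tau\|$ via at most $d-1$ consecutive steps, correctly applying Lemma \ref{L:change} so that no index is over- or under-counted, and keeping track of the interplay between the path-variation increments and the stale-gradient increments is where almost all the work lies. Once the bookkeeping is done, part (i) follows from direct collection, and part (ii) reduces to the quadratic inversion described above.
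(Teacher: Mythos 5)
Your plan follows essentially the same route as the paper's proof: the quasar-convexity bound, the three-way decomposition of $x_t-x^*_t$ around the delayed reference point, the use of \eqref{iterates}/\eqref{sets} with projection non-expansiveness and the path-variation telescoping, Cauchy--Schwarz plus the counting Lemma \ref{L:change} for the stale-gradient and comparator-drift terms, and the quadratic-inequality inversion in the weakly smooth case. The only deviations are cosmetic bookkeeping choices (e.g., bounding $\|r_k\|^2\le 2\|\nabla f_k(x_k)\|^2+2\delta_k^2$ uniformly rather than keeping the cross term, and capping indices beyond $T$), which would at most perturb the constants, not the structure or order of the bounds.
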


\begin{proof}
{\color{black} For every positive integer $t$, let $\bar{t} := \min\{t, T\}$.
 By Assumption \ref{B3}, for every $t \in [T]$, we have that:
 \begin{align}\label{eq:b1}
 \kappa\left( f_{t}(x_{t})-f_{t}(x^{*}_{t})\right) \leq & \scal{\nabla f_{t}(x_{t})}{
  x_{t}-x^{*}_{t}} \nonumber \\ 
  = & \left( \scal{\nabla f_{t}(x_{t})}{x_{t}-x_{t'}}
 + \scal{\nabla f_{t}(x_{t})}{x_{t'}  -x^{*}_{\bar{t'}}} + \scal{\nabla f_{t}(x_{t})}{x^{*}_{\bar{t'}}-x^{*}_{t}} \right) \nonumber\\ 
  =&  \scal{\nabla f_{t}(x_{t})}{x_{t}-x_{t'}}+ \scal{ r_{t}}{x_{t'}-x^{*}_{\bar{t'}}}- \scal{ m_{t}}{x_{t'}-x^{*}_{\bar{t'}}}+\scal{\nabla f_{t}(x_{t})}{x^{*}_{\bar{t'}}-x^{*}_{t}}.
 \end{align} 
 \noindent Summing \eqref{eq:b1} from $1$ to $T$, we obtain:
\begin{align}
 \kappa R_{T}^{NS} \leq  &  \sum\limits_{t=1}^{T} \scal{\nabla f_{t}(x_{t})}{x_{t}-x_{t'}} +\sum\limits_{t=1}^{T}\left(\scal{ r_{t}}{x_{t'} 
 - x^{*}_{\bar{t'}}}-\scal{m_{t}}{x_{t'}-x^{*}_{\bar{t'}}}\right) +
 \sum\limits_{t=1}^{T} \scal{\nabla f_{t}(x_{t})} 
 {x^{*}_{\bar{t'}} - x^{*}_{t}} & \nonumber\\ 
 \leq & \sum\limits_{t=1}^{T} \scal{\nabla f_{t}(x_{t})}{x_{t}-x_{t'}} +
 \sum\limits_{t=1}^{T}\left(\scal{r_{t}}{x_{t'}-x^{*}_{\bar{t'}}}+\|m_{t}|\| 
 \|x_{t'}-x^{*}_{\bar{t'}}\|\right) + \sum\limits_{t=s}^{T+d-1}
 \sum\limits_{k \in \FF_{t}}  \scal{\nabla f_{k}(x_{k})}{x^{*}_{\bar{t}}-x^{*}_{k}} \nonumber \\ 
 \leq & \sum\limits_{t=1}^{T} \scal{\nabla f_{t}(x_{t})}{x_{t}-x_{t'}} + 
 \sum\limits_{t=1}^{T} \scal{r_{t}}{x_{t'} - x^{*}_{\bar{t'}}} +  2R \Delta_{T} 
 + \sum\limits_{t=s}^{T+d-1} \sum\limits_{k\in\FF_{t}} \|\nabla f_{k}(x_{k})\| \| 
 x^{*}_{\t}-x^{*}_{k}\|. \label{eq:b2}
 \end{align}
 \cris{where the second inequality follows from Cauchy–Schwarz and \eqref{sets}, while the third inequality follows from the definition of $\Delta_{T}$, Assumption \ref{B4}, and Cauchy–Schwarz.}\medskip
 
\noindent Now, we establish an upper bound for the second term on the right side of \eqref{eq:b2}:
\begin{align}\label{eq:c6}
&\sum\limits_{t=1}^{T} \scal{r_{t}}{ x_{t'}-x^{*}_{\bar{t'}}} 
= \sum\limits_{t=s}^{T+d-1} \sum\limits_{k\in\FF_{t}}\scal{r_{k}}{x_{t}-x^{*}_{\t}}= \sum\limits_{t=s}^{T+d-1} \scal{\sum\limits_{k\in\FF_{t}}r_{k}}{x_{t}-x^{*}_{\t}}= \sum\limits_{t=s}^{T+d-1} \frac{1}{\eta} \scal{x_{t}-x_{t+1}'}{x_{t}-x^{*}_{\t}}\nonumber\\
= & \sum\limits_{t=s}^{T+d-1} \frac{1}{2\eta} \left( \|x_{t}-x^{*}_{\t}\|^{2}-\|x_{t+1}'-x^{*}_{\t}\|^{2}+\|x_{t}-x_{t+1}'\|^{2} \right) \nonumber \\
 =& \frac{1}{2\eta}\sum\limits_{t=s}^{T+d-1}  \left( \|x_{t}-x^{*}_{\t}\|^{2}-\|x_{t+1}'-x^{*}_{\t}\|^{2}+\eta^{2}\left\| \sum\limits_{k \in \FF_{t}}r_{k} \right\|^{2}  \right) \nonumber \\
\leq& \frac{1}{2\eta}\sum\limits_{t=s}^{T+d-1}  \left( \|x_{t}-x^{*}_{\t}\|^{2}-\|x_{t+1}-x^{*}_{\t}\|^{2}+\eta^{2}\left\| \sum\limits_{k \in \FF_{t}}r_{k} \right\|^{2} \right)  \nonumber \\
\leq& \frac{1}{2\eta}\sum\limits_{t=s}^{T+d-1}  \left( \|x_{t}-x^{*}_{\t}\|^{2}-\|x_{t+1}-x^{*}_{\t}\|^{2}+\eta^{2}\sum\limits_{k \in \FF_{t}} |\FF_{t}|\|r_{k} \|^{2} \right)  \nonumber \\
=& \frac{1}{2\eta} \sum\limits_{t=s}^{T-1} \left( \|x_{t}-x^{*}_{\t}\|^{2}-\|x_{t+1}-x^{*}_{\t}\|^{2}\right)+\frac{1}{2\eta} \sum\limits_{t=T}^{T+d-1}  \left( \|x_{t}-x^{*}_{T}\|^{2}-\|x_{t+1}-x^{*}_{T}\|^{2}\right)+ \frac{\eta}{2}\sum\limits_{t=s}^{T+d-1} \sum\limits_{k \in \FF_{t}} |\FF_{t}|\|r_{k} \|^{2}\nonumber \\ 
=& \frac{1}{2\eta}\sum\limits_{t=s}^{T-1}  \left( \|x_{t}-x^{*}_{t}\|^{2}-\|x_{t+1}-x^{*}_{t}\|^{2}\right)+\frac{1}{2\eta}\left( \|x_{T}-x^{*}_{T}\|^{2}-\|x_{T+d}-x^{*}_{T}\|^{2}\right)+ \frac{\eta}{2}\sum\limits_{t=s}^{T+d-1} \sum\limits_{k \in \FF_{t}} |\FF_{t}|\|r_{k} \|^{2}\nonumber \\ 
\leq & \frac{1}{2\eta}\sum\limits_{t=s}^{T-1} \left( \|x_{t}-x^{*}_{t}\|^{2}-\|x_{t+1}-x^{*}_{t}\|^{2}\right) +\frac{1}{2\eta}\left( \|x_{T}-x^{*}_{T}\|^{2}-\|x_{T+d}-x^{*}_{T}\|^{2}\right)+\frac{\eta d}{2}\sum\limits_{t=s}^{T+d-1} \sum\limits_{k \in \FF_{t}} \|r_{k} \|^{2},
\end{align}
\cris{where  the first equality follows from \eqref{sets}, the  third  equality and the fifth equalities follow from \eqref{iterates}. The first inequality is due to the non-expansiveness of the projection operator, while the second inequality results from the convexity of the norm. The sixth equality follows by splitting the sum into two parts, from $s$ to $T-1$ and from $T$ to $T+d-1$, and noting that in the second sum we have $\bar{t}=T$. The final equality holds developing the second term which is a telescopic sum.  The final inequality holds in view of, for every $k \in [T+d]$, $|\FF_{k}|\leq d$.}\medskip

\noindent We now proceed to develop the last sum in \eqref{eq:c6}. It follows from \eqref{sets} and the definition of $r_{t}$ that
\begin{align}\label{eq:c6.5}
    \sum\limits_{t=s}^{T+d-1} \sum\limits_{k \in \FF_{t}} \|r_{k} \|^{2}=\sum\limits_{t=1}^{T}  \|r_{t}\|^{2}=\sum\limits_{t=1}^{T} \|\nabla f_{t}(x_{t})+m_{t} \|^{2} 
    \leq&\sum\limits_{t=1}^{T} \left(\|\nabla f_{t}(x_{t})\|+\|m_{t}\|\right)^{2}\nonumber\\ 
    =&\sum\limits_{t=1}^{T} \left(\|\nabla f_{t}(x_{t})\|^{2}+2\|\nabla f_{t}(x_{t})\|\|m_{t}\|+\|m_{t}\|^{2}\right)\nonumber\\ 
    \leq&\sum\limits_{t=1}^{T} \left(\|\nabla f_{t}(x_{t})\|^{2}+2\|\nabla f_{t}(x_{t})\|\delta_{t}+\delta^{2}_{t}\right)\nonumber\\ 
    \leq&\sum\limits_{t=1}^{T} \left(\|\nabla f_{t}(x_{t})\|^{2}+2\|\nabla f_{t}(x_{t})\|\delta_{t}\right)+\Lambda_{T},
\end{align}
\cris{where the second inequality follows from the boundedness of $m_{t}.$ While the third is due to the definition of $\Lambda_{T}.$}  On the other hand, \cris{by triangle inequality and Assumption \ref{B4}}, for every $t\in [T-1]$, we have that:
\begin{align*} \|x_{t+1}-x^{*}_{t+1}\|^{2}\leq &\|x_{t+1}-x^{*}_{t}\|^{2}+2\|x_{t+1}-x^{*}_{t}\|\|x^{*}_{t+1}-x_{t}^{*}\|+\|x^{*}_{t+1}-x_{t}^{*}\|^{2}\nonumber\\ \leq &\|x_{t+1}-x^{*}_{t}\|^{2}+\left(2\|x_{t+1}-x^{*}_{t}\|+\|x^{*}_{t+1}-x_{t}^{*}\|\right)\|x^{*}_{t+1}-x_{t}^{*}\|\nonumber\\ \leq &\|x_{t+1}-x^{*}_{t}\|^{2}+6R\|x^{*}_{t+1}-x_{t}^{*}\|,
\end{align*}
which implies that
\begin{align}\label{eq:c7} -\|x_{t+1}-x^{*}_{t}\|^{2}\leq &-\|x_{t+1}-x^{*}_{t+1}\|^{2}+6R\|x^{*}_{t+1}-x_{t}^{*}\|.
\end{align}
\noindent Plugging \eqref{eq:c6.5} and \eqref{eq:c7} in \eqref{eq:c6} yields:
\begin{align}\label{eq:c8}
  \sum\limits_{t=1}^{T} \scal{r_{k}}{x_{t'}-x^{*}_{t'}} 
 \leq & \frac{1}{2\eta} \sum\limits_{t=s}^{T-1} \left( \|x_{t}-x^{*}_{t}\|^{2} -\|
 x_{t+1}-x^{*}_{t+1}\|^{2} + 6R\|x^{*}_{t+1}-x_{t}^{*}\|\right)  \nonumber\\ 
 &+\frac{1}{2\eta}\left( \|x_{T} - x^{*}_{T}\|^{2} - \|x_{T+d} - x^{*}_{T}\|^{2}
 \right)  + \frac{\eta d}{2}\sum\limits_{t=1}^{T} \left(\|\nabla f_{t}(x_{t})\|^{2} 
 + 2 \|\nabla f_{t}(x_{t})\| \delta_{t} \right)+\frac{\eta d}{2}
 \Lambda_{T} \nonumber\\ 
 = & \frac{1}{2\eta} \sum\limits_{t=s}^{T-1} \left( \|x_{t}-x^{*}_{t}\|^{2} -\|
 x_{t+1}-x^{*}_{t+1}\|^{2}\right) + \frac{3R}{\eta} \sum\limits_{t=s}^{T-1}\|x^{*}_{t+1}-x_{t}^{*}\|  \nonumber\\ 
 &+\frac{1}{2\eta}\left( \|x_{T} - x^{*}_{T}\|^{2} - \|x_{T+d} - x^{*}_{T}\|^{2}
 \right)  + \frac{\eta d}{2}\sum\limits_{t=1}^{T} \left(\|\nabla f_{t}(x_{t})\|^{2} 
 + 2 \|\nabla f_{t}(x_{t})\| \delta_{t} \right)+\frac{\eta d}{2}
 \Lambda_{T} \nonumber\\ 
 = & \frac{1}{2\eta}\left( \|x_{s}-x^{*}_{s}\|^{2} -\|
 x_{T}-x^{*}_{T}\|^{2}\right) + \frac{3R}{\eta} V_{T} \nonumber\\ 
 &+\frac{1}{2\eta}\left( \|x_{T} - x^{*}_{T}\|^{2} - \|x_{T+d} - x^{*}_{T}\|^{2}
 \right)  + \frac{\eta d}{2}\sum\limits_{t=1}^{T} \left(\|\nabla f_{t}(x_{t})\|^{2} 
 + 2 \|\nabla f_{t}(x_{t})\| \delta_{t} \right)+\frac{\eta d}{2}
 \Lambda_{T} \nonumber\\ 
 \leq & \frac{1}{2\eta} \|x_{s}-x^{*}_{s}\|^{2}+ \frac{3R}{\eta} 
 V_{T}+\frac{\eta d}{2}
 \sum\limits_{t=1}^{T} \left(\|\nabla f_{t}(x_{t})\|^{2}+2\|\nabla f_{t}(x_{t})\| 
 \delta_{t}\right) + \frac{\eta d}{2} \Lambda_{T} \nonumber\\ 
 \leq & \frac{2R^{2}}{\eta}+ \frac{3R}{\eta}V_{T}+\frac{\eta d}{2} 
 \sum\limits_{t=1}^{T} \|\nabla f_{t}(x_{t})\|^{2}+\eta d
 \sum\limits_{t=1}^{T} \|\nabla f_{t}(x_{t})\| 
 \delta_{t} + \frac{\eta d}{2} \Lambda_{T},
\end{align}
\cris{where the second equality is obtained by developing the telescopic sum and using the definition of $V_{T}$, while the last inequality follows from assumption \ref{B4}.} \medskip

\noindent \cris{Now we proceed to bound the first and third terms of \eqref{eq:b2} by considering different cases.}

\begin{enumerate}
\item \textbf{Case  $f_{t}$ is $L$-Lipschitz}:
We now proceed to upper bound the last term in the right-hand side of \eqref{eq:b2}:
\cris{\begin{align}\label{eq:c5}
& \sum\limits_{t=s}^{T+d-1} \sum\limits_{k\in\FF_{t}} \|\nabla f_{k}(x_{k})\| \|x^{*}_{\t}-x^{*}_{k}\| \leq L\sum\limits_{t=s}^{T+d-1} \sum\limits_{k\in\FF_{t}} \|x^{*}_{\t}-x^{*}_{k}\|=L \sum\limits_{t=1}^{T}\|x^{*}_{\overline{t'}}-x^{*}_{t}\|\nonumber\\
= &L \sum\limits_{t=1}^{T-1}\|x^{*}_{\overline{t'}}-x^{*}_{t}\| \leq L \sum\limits_{t=1}^{T -1} \sum\limits_{k=t+1}^{\overline{t'}}\|x^{*}_{k}-x^{*}_{k-1}\| 
= L \sum\limits_{t=1}^{T-1}\sum\limits_{k=t+1}^{\overline{t+d_{t}-1}}\|x^{*}_{k}-x^{*}_{k-1}\| 
 \nonumber\\ \leq & L \sum\limits_{t=1}^{T-1} \sum\limits_{k=t+1}^{\overline{t+d-1}}\|x^{*}_{k}-x^{*}_{k-1}\|\leq  L(d-1)\sum\limits_{t=1}^{T-1}\|x^{*}_{t+1}-x^{*}_{t}\|=L (d-1)V_{T},
  \end{align}}
\cris{where the first inequality follows from Assumption \ref{B2}, the first equality is due to \eqref{sets}, and the second equality is due to $x^{*}_{\overline{T'}}=x^{*}_{T}$. The second inequality comes from the triangle inequality. The third inequality is given by $d=\max\limits_{t \in [T]} d_{t}$, while the fourth follows from  Lemma \ref{L:change}.} \medskip

\noindent Substituting \eqref{eq:c8} and \eqref{eq:c5} into \eqref{eq:b2} and rearranging terms, we obtain:
  \begin{align}
  \kappa R^{NS}_{T}  \leq &  \sum\limits_{t=1}^{T} \scal{\nabla f_{t}(x_{t})}{x_{t}-x_{t'}}+ \frac{2R^{2}}{\eta}+ \frac{3R}{\eta}V_{T}+\frac{\eta d}{2}\sum\limits_{t=1}^{T} \left(\|\nabla f_{t}(x_{t})\|^{2}+2\|\nabla f_{t}(x_{t})\|\delta_{t}\right)\nonumber \\ &+\frac{\eta d}{2} \Lambda_{T}+2R\Delta_{T}+ L(d-1)V_{T}  \nonumber\\ \leq &  \sum\limits_{t=1}^{T} \|\nabla f_{t}(x_{t})\|\|x_{t}-x_{t'}\|+  \frac{2R^{2}}{\eta}+ \frac{3R}{\eta}V_{T}+\frac{\eta d}{2}\sum\limits_{t=1}^{T} \left(L^{2}+2L\delta_{t}\right)+\frac{\eta d}{2} \Lambda_{T}+2R\Delta_{T}+ L(d-1)V_{T}  \nonumber\\  \leq & \sum\limits_{t=1}^{T}L\|x_{t}-x_{t'}\|+\frac{2R^{2}}{\eta}+\frac{3R}{\eta}V_{T}+\frac{\eta dTL^{2}}{2}+\left(\eta dL+2R\right)\Delta_{T}+ \frac{\eta d}{2}\Lambda_{T}+L(d-1)V_{T},
   \label{eq:c9}
 \end{align}
 \cris{where the second inequality follows from the Cauchy-Schwarz inequality and Assumption \ref{B4}. The third inequality is derived from Assumption \ref{B4} and the development of  the second sum.} Then, it remains to bound $\sum\limits_{t=1}^{T} L\|x_{t}-x_{t'}\|$. To do so, we use the triangle inequality to obtain:
\begin{align}
\|x_{t'}-x_{t}\| \leq& \sum\limits_{i=t}^{t'-1} \|x_{i+1}-x_{i}\| \leq \sum\limits_{i=t}^{t'-1} \|x_{i+1}'-x_{i}\| =\sum\limits_{i=\max(t,s)}^{t'-1} \|x_{i+1}'-x_{i}\|= \eta\sum\limits_{i=\max(t,s)}^{t'-1} \left\| \sum\limits_{k \in \FF_{i}} r_{k}\right\|\nonumber\\ =& \eta\sum\limits_{i=\max(t,s)}^{t'-1} \left\| \sum\limits_{k \in \FF_{i}}\left(\nabla f_{k}(x_{k})+m_{k}\right)\right\| \leq  \eta\sum\limits_{i=\max(t,s)}^{t'-1} \left\| \sum\limits_{k \in \FF_{i}}\nabla f_{k}(x_{k})\right\|+\eta\sum\limits_{i=\max(t,s)}^{t'-1} \left\| \sum\limits_{k \in \FF_{i}}m_{k}\right\|\nonumber\\ \leq& \eta\sum\limits_{i=\max(t,s)}^{t'-1}  |\FF_{i}|L+\eta \sum\limits_{i=\max(t,s)}^{t'-1} \sum\limits_{k \in \FF_{i}}\|m_{k}\|\label{eq:c10}
\end{align}
\cris{where the first inequality follows from the triangle inequality, the second inequality from the non-expansiveness of the projection, the first equality from the fact that $x_{k+1}'=x_{k}$ for $k\leq s-1$, the second equality from \eqref{iterates}, and the third inequality from the fact that, for every $k \in [T+d]$, $|\FF_{k}|\leq d$.} Summing \eqref{eq:c10} from $1$ to $T$ we have that 
\cris{\begin{align}\label{eq:c11}
\sum\limits_{t=1}^{T}\|x_{t'}-x_{t}\| \leq & \eta \sum\limits_{t=1}^{T}\sum\limits_{i=\max(t,s)}^{t'-1} |\FF_{i}| L+\eta\sum\limits_{t=1}^{T}\sum\limits_{i=\max(t,s)}^{t'-1} \sum\limits_{k \in \FF_{i}}\delta_{k}
\end{align}}

To bound these terms, first note that $s \leq d$. Indeed, if $s > d$, then for every $k \in \FF_{s}$, $k+d_{k}-1=s > d$, implying $1 \notin \FF_{s}$. By the definition of $s$, $d_{1}=1+d_{1}-1 > s > d$, contradicting the maximality of $d$. We then consider two cases:  \medskip

\begin{enumerate}
        \item \textbf{Case $s>T$:} In this case, the algorithm does not update, and we have:
              \begin{align*}
\sum\limits_{t=1}^{T} \sum\limits_{i=\max(t,s)}^{t'-1} |\FF_{i}| &= \sum\limits_{t=1}^{T} \sum\limits_{i=s}^{t'-1} |\FF_{i}|=\sum\limits_{t=1}^{T} \sum\limits_{i=s}^{t+d_{t}-2} |\FF_{i}| \leq \sum\limits_{t=1}^{T} \sum\limits_{i=s}^{T+d-2} |\FF_{i}|\leq (d-1) \sum\limits_{t=s}^{T+d-1} |\FF_{i}| = (d-1)T,
\end{align*}

where the first equality holds in view of $s \geq T$, ensuring $\max(t,s) \geq t$ for all $t \in [T]$. The second equality and first inequality follow from $t'=t+d_{t}-1 \leq T+d-1$, while the second inequality holds since $d \geq s > T$. Similarly,

\begin{align*}
\sum\limits_{t=1}^{T} \sum\limits_{i=\max(t,s)}^{t'-1} \sum\limits_{k \in \FF_{i}}\|m_{k}\| &= \sum\limits_{t=1}^{T} \sum\limits_{i=s}^{t'-1} \sum\limits_{k \in \FF_{i}}\|m_{k}\|=\sum\limits_{t=1}^{T} \sum\limits_{i=s}^{t+d_{t}-2} \sum\limits_{k \in \FF_{i}}\|m_{k}\| \leq \sum\limits_{t=1}^{T} \sum\limits_{i=s}^{T+d-2} \sum\limits_{k \in \FF_{i}}\delta_{k} \\
& \leq (d-1) \sum\limits_{t=s}^{T+d-1} \sum\limits_{k \in \FF_{i}}\delta_{k} \leq  (d-1) \Delta_{T}.
\end{align*}

\item \textbf{Case $s \leq T$:} In this case, we have:
\begin{align*}
\sum\limits_{t=1}^{T} \sum\limits_{i=\max(t,s)}^{t'-1} |\FF_{i}| &=\sum\limits_{t=1}^{s-1} \sum\limits_{i=s}^{t'-1} |\FF_{i}|+\sum\limits_{t=s}^{T} \sum\limits_{i=t}^{t'-1} |\FF_{i}| \leq \sum\limits_{t=1}^{s-1} \sum\limits_{i=s}^{T+d-2} |\FF_{i}|+\sum\limits_{t=s}^{T} \sum\limits_{i=t}^{t+d-2} |\FF_{i}| \\
&=\sum\limits_{t=1}^{s-1} \sum\limits_{i=s}^{T+d-2} |\FF_{i}|+\sum\limits_{t=s}^{T+d-2} \sum\limits_{i=\max(s, t-d+2)}^{t}|\FF_{t}| \\
& \leq (s-1) \sum\limits_{i=s}^{T+d-2} |\FF_{i}|+(d-1) \sum\limits_{t=s}^{T+d-2} |\FF_{t}| \leq 2(d-1) \sum\limits_{t=s}^{T+d-2} |\FF_{t}| \leq  2(d-1)T,
\end{align*}
where the first inequality follows from $t'=t+d_{t}-1 \leq t+d-1 \leq T+d-1$, the second equality by reordering the sum, \cris{the second inequality from the fact that the second inner sum has at most $d-1$ constant terms},  and the third inequality since $d \geq s$. Similarly,
\begin{align*}
 & \sum\limits_{t=1}^{T} \sum\limits_{i=\max(t,s)}^{t'-1} \sum\limits_{k \in 
 \FF_{i}}\|m_{k}\|  = \sum\limits_{t=1}^{s-1} \sum\limits_{i=s}^{t'-1} 
 \sum\limits_{k \in \FF_{i}}\|m_{k}\|+\sum\limits_{t=s}^{T} \sum\limits_{i=t}^{t'-1}
 \sum\limits_{k \in \FF_{i}}\|m_{k}\| \\ 
 \leq & \sum\limits_{t=1}^{s-1} \sum\limits_{i=s}^{T+d-2} \sum\limits_{k \in
 \FF_{i}}\delta_{k}+\sum\limits_{t=s}^{T} \sum\limits_{i=t}^{t+d-2} 
 \sum\limits_{k \in \FF_{i}}\delta_{k} = \sum\limits_{t=1}^{s-1} 
 \sum\limits_{i=s}^{T + d-2} \sum\limits_{k \in \FF_{i}} \delta_{k} + \sum\limits_{t=s}^{T 
 + d - 2} \sum\limits_{i=\max(s, t-d+2)}^{t}\sum\limits_{k \in \FF_{t}}\delta_{k} \\
 \leq & (s-1) \sum\limits_{i=s}^{T+d-2} \sum\limits_{k \in \FF_{i}}\delta_{k} + 
 (d-1) \sum\limits_{t=s}^{T+d-2}\delta_{t} \leq 2(d-1) \sum\limits_{t=s}^{T + d - 
 2} \sum\limits_{k \in \FF_{t}}\delta_{k}\leq  2(d-1)\Delta_{T}.
\end{align*}
    \end{enumerate}    

Then, combining the last inequality with \eqref{eq:c9}  and \eqref{eq:c11}, we get:
\begin{align}
  R^{NS}_{T}  \leq &  \frac{2R^{2}}{\eta\kappa}+\frac{3R+\eta L(d-1)}{\eta\kappa}V_{T}+  \frac{L^{2}d+4(d-1)L^{2}}{2\kappa}\eta T+ \frac{\eta d}{2\kappa}\Lambda_{T}+\frac{\eta dL+2R+2\eta(d-1)L}{\kappa}\Delta_{T}. \nonumber
 \end{align}
and the proof of this bound is complete.

\item \textbf{Case  $f_{t}$ is weakly smooth}: We now proceed to upper bound the \cris{last term in the right-hand side of \eqref{eq:b2}}:
\cris{\begin{align}\label{eq:b5}
 &\hspace{-7mm}\left(\sum\limits_{t=s}^{T+d-1} \sum\limits_{k\in\FF_{t}} \|\nabla f_{k}(x_{k})\| \| 
 x^{*}_{\t}-x^{*}_{k}\|\right)^{2}=\left(\sum\limits_{t=1}^{T} \|\nabla f_{t}(x_{t})\|\|x^{*}_{\overline{t'}} -
 x^{*}_{t}\|\right)^{2} \leq \left( \sum\limits_{t=1}^{T}\|x^{*}_{\overline{t'}} -
 x^{*}_{t}\|^{2}\right) \left(\sum\limits_{t=1}^{T} \|\nabla f_{t}(x_{t})\|^{2} 
 \right) \nonumber\\
\leq & 2R \left (\sum\limits_{t=1}^{T}\|x^{*}_{\overline{t'}}-x^{*}_{t}\|\right) \left(\Gamma\sum\limits_{t=1}^{T}\left(f_{t}(x_{t})-f_{t}(x^{*}_{t})\right)\right)
\leq 2R \Gamma \left(\sum\limits_{t=1}^{T}\sum\limits_{k=t+1}^{\overline{t'}}\|x^{*}_{k}-x^{*}_{k-1}\|\right) R^{NS}_{T} \nonumber \\ 
\leq & 2R \Gamma \left( \sum\limits_{t=1}^{T-1}\sum\limits_{k=t+1}^{\overline{t + d 
 - 1}} \|x^{*}_{k}-x^{*}_{k-1}\|\right)  R^{NS}_{T} 
 \leq  2R(d-1) \Gamma \left( \sum\limits_{t=2}^{T} \|x^{*}_{k} - x^{*}_{k-1} \| 
 \right) R^{NS}_{T} = 2R(d-1) \Gamma V_{T} R^{NS}_{T},
 \end{align}}
\noindent\cris{where the first equality is due to \eqref{sets}, the first inequality follows from applying Cauchy-Schwarz, the second from Assumption \ref{B5}, and the third from the triangle inequality and the definition of $R_{T}^{NS}$. The fourth inequality holds as 
$d=\max\limits_{t \in [T]} d_{t}$ and the last inequality follows from Lemma \ref{L:change}. }

Now, we control the term $\sum\limits_{t=1}^{T}\|\nabla f_{t}(x_{t})\|\delta_{t}$, \cris{which is the fourth term in the right-hand side of \eqref{eq:c8}}.
Applying the Cauchy-Schwarz inequality, the definition of $\Lambda_{T}$, and Assumption \ref{B5} we obtain:
 \begin{align}
   \hspace{-3mm} \left(\sum\limits_{t=1}^{T} \|\nabla f_{t}(x_{t})\|\delta_{t}\right)^{2}\leq \left(\sum\limits_{t=1}^{T} \|\nabla f_{t}(x_{t})\|^{2}\right)\left(\sum\limits_{t=1}^{T} \delta_{t}^{2}\right)  \leq \left(\Gamma\sum\limits_{t=1}^{T} \left( f_{t}(x_{t})-f_{t}(x^{*}_{t})\right)\right)\Lambda_{T}=\Gamma R^{NS}_{T}\Lambda_{T}. \label{eq:b8i}
 \end{align}\noindent Plugging \eqref{eq:b5}, \eqref{eq:b8i}, and \eqref{eq:c8} into \eqref{eq:b2} we get:
   \cris{\begin{align}
  \kappa R^{NS}_{T}  \leq &  \sum\limits_{t=1}^{T}\scal{\nabla f_{t}(x_{t})}{x_{t}-x_{t'}}+\frac{2R^{2}}{\eta}+ \frac{3R}{\eta}V_{T}+\frac{\eta d}{2}\sum\limits_{t=1}^{T}\|\nabla f_{t}(x_{t})\|^{2}+\eta d \left( \Gamma R^{NS}_{T}\Lambda_{T}\right)^{\frac{1}{2}}+\frac{\eta d}{2} \Lambda_{T}\nonumber\\
  &+ 2R \Delta_{T}+\left(2R(d-1)\Gamma\right)^{\frac{1}{2}} \left(V_{T} R^{NS}_{T}\right)^{\frac{1}{2}}\nonumber \\
    \leq &  \sum\limits_{t=1}^{T}\|\nabla f_{t}(x_{t})\|\|x_{t}-x_{t'}\|+\frac{2R^{2}}{\eta}+ \frac{3R}{\eta}V_{T}+\frac{\eta d\Gamma}{2}R^{NS}_{T}+\eta d \left( \Gamma R^{NS}_{T}\Lambda_{T}\right)^{\frac{1}{2}}+\frac{\eta d}{2} \Lambda_{T}+ 2R \Delta_{T}\nonumber\\
  &+\left(2R(d-1)\Gamma\right)^{\frac{1}{2}} \left(V_{T} R^{NS}_{T}\right)^{\frac{1}{2}}
   \label{eq:b8}
 \end{align}}
 where the second inequality follows from Assumption \ref{B5}. To bound the regret, we must control the term $\sum\limits_{t=1}^{T} \|\nabla f_{t}(x_{t})\| \|x_{t}-x_{t'}\|$. Applying the Cauchy-Schwarz inequality and Assumption \ref{B5}, we obtain:
 \begin{align}\label{eq:b13}
   \hspace{-3mm} \left(\sum\limits_{t=1}^{T} \|\nabla f_{t}(x_{t})\|\|x_{t}-x_{t'}\|\right)^{2}\leq&\left(\sum\limits_{t=1}^{T} \|\nabla f_{t}(x_{t})\|^{2}\right)\left(\sum\limits_{t=1}^{T} \|x_{t}-x_{t'}\|^{2}\right) \leq\Gamma R^{NS}_{T}\left(\sum\limits_{t=1}^{T} \|x_{t}-x_{t'}\|^{2}\right). 
 \end{align}
 Then, it remains to bound the second term on the right-hand side of \eqref{eq:b13}:
\begin{align}
\|x_{t'}-x_{t}\|^{2} \leq& (d-1)\sum\limits_{i=t}^{t'-1} \|x_{i+1}-x_{i}\|^{2} \leq (d-1)\sum\limits_{i=t}^{t'-1} \|x_{i+1}'-x_{i}\|^{2} =(d-1)\sum\limits_{i=\max(t,s)}^{t'-1} \|x_{i+1}'-x_{i}\|^{2}\nonumber\\
= & (d-1) \eta^{2} \sum\limits_{i = \max(t,s)}^{t'-1} \left\| \sum\limits_{k  \in \FF_{i}} r_{k} \right\|^{2} \leq  2(d-1) \eta^{2} \sum\limits_{i = \max(t,s)}^{t'-1}  \sum\limits_{k \in \FF_{i}}|\FF_{i}|\left(\|\nabla f_{k}(x_{k}) \|^{2}+\|m_{k} \|^{2} \right)\nonumber \\ \leq & 2d(d-1) \eta^{2} \sum\limits_{i= \max(t,s)}^{t'-1}  \sum\limits_{k \in \FF_{i}} \left(\|\nabla f_{k}(x_{k}) \|^{2}+\delta_{k}^{2}\right),  \label{eq:b10}
\end{align}
  where the first and third inequalities are due to the convexity of the norm. The first equality holds as $\|x_{k+1}-x_{k}\|=0$ for $k \in [s-1]$, the second from \eqref{iterates}, and the final inequality since $|\FF_{k}| \leq d$ for all $k \in [t]$.\medskip
  
\noindent Summing \eqref{eq:b10} from $1$ to $T$ we have that 
\begin{align}
 & \sum\limits_{t=1}^{T}\|x_{t'}-x_{t}\|^{2} \leq 2d(d-1) \eta^{2} 
 \sum\limits_{t=1}^{T} \sum\limits_{i=\max(t,s)}^{t'-1}  \sum\limits_{k \in \FF_{i}}
 \left(\|\nabla f_{k}(x_{k})\|^{2}+\delta_{k}^{2}\right) \nonumber\\ 
 = & 2d(d-1)\eta^{2}\sum\limits_{t=1}^{T}\sum\limits_{i=\max(t,s)}^{t+d_{t} 
 -2}  \sum\limits_{k \in \FF_{i}}\left(\|\nabla f_{k}(x_{k})\|^{2} + \delta_{k}^{2}
 \right) \leq 2d(d-1) \eta^{2} \sum\limits_{t=1}^{T} \sum\limits_{i=\max(t,s)}^{t 
 +d-2} \sum\limits_{k \in \FF_{i}}\left(\|\nabla f_{k}(x_{k})\|^{2}+ \delta_{k}^{2} 
 \right) \nonumber\\
 \leq &  2d(d-1)^{2} \eta^{2}
 \sum\limits_{t=1}^{T} \left\|\nabla f_{t}(x_{t}) \right\|^{2} + 2d(d-1)^{2} 
 \eta^{2} \Lambda_{T}  
 \leq  2 \Gamma d(d-1)^{2}\eta^{2}R^{NS}_{T}+2d(d-1)^{2}\eta^{2}\Lambda_{T} 
 \label{eq:b11}
\end{align}

where the second inequality follows from the definition of $d$, the third by noting that each term is at most $d-1$ times, and the fourth is due to Assumption \ref{B5}. Combining \eqref{eq:b13} and \eqref{eq:b11}, we obtain:
\begin{align}\label{eq:b16}
 \sum\limits_{t=1}^{T} \|\nabla f_{t}(x_{t})\|\|x_{t}-x_{t'}\|\leq& \left( 2d\right)^{\frac{1}{2}}\Gamma(d-1)\eta  R^{NS}_{T}+(2d\Gamma R^{NS}_{T})^{\frac{1}{2}}(d-1)\eta\left(\Lambda_{T}\right)^{\frac{1}{2}}. 
\end{align}

\noindent Substituting  \eqref{eq:b16} into \eqref{eq:b8}, we obtain
  \begin{align}
  R^{NS}_{T}  \leq & \frac{2R^{2}}{\eta\kappa}+\frac{3R}{\eta\kappa }V_{T}+ \frac{2R}{\kappa} \Delta_{T}+ \frac{\eta d}{2\kappa}\Lambda_{T}+ \frac{\eta d\Gamma}{2\kappa}R^{NS}_{T}+\frac{\left( 2d\right)^{\frac{1}{2}}\Gamma(d-1)\eta  }{\kappa}R^{NS}_{T}\nonumber\\
   &+\frac{\left(2R(d-1)\Gamma\right)^{\frac{1}{2}}}{\kappa} \left(V_{T} R^{NS}_{T}\right)^{\frac{1}{2}}+\frac{(2d\Gamma)^{\frac{1}{2}}(d-1)\eta}{\kappa}\left(\Lambda_{T}R^{NS}_{T}\right)^{\frac{1}{2}}+ \frac{\eta d}{\kappa}\left(\Gamma R^{NS}_{T}\Lambda_{T}\right)^{\frac{1}{2}}.\nonumber
 \end{align}
Rearranging terms, we get that $a R^{NS}_{T}-b\left(R^{NS}_{T}\right)^{\frac{1}{2}}\leq c.$  Since $R^{NS}_{T}\geq 0$, by solving the second-order inequality, we get,
  \begin{align}
     R^{NS}_{T}&\leq \left( \frac{b+\sqrt{b^{2}+4ac}}{2a}\right)^{2}\leq \frac{b^{2}+2ac+b\left(b^{2}+4ac\right)^{\frac{1}{2}}}{2a^{2}},\nonumber
 \end{align}
 \end{enumerate} \hspace{9mm} which completes the proof. } \hfill
\end{proof}\medskip

\noindent As an immediate consequence of Theorem \ref{theo:02}, we obtain the following corollary when $m_{k}\equiv 0$.\medskip

\begin{corollary}\label{cor:01}
Under assumptions \ref{B4}-\ref{B3} and considering that the full gradient is available, i.e., for all $t \in [T]$, $r_{t}=\nabla f_{t}(x_{t})$, the following assertions hold: \begin{enumerate} \item If \ref{B2} holds, the stationary regret of Algorithm \ref{DOGD-SC} with a constant step-size $0<\eta_{t}=\eta$ can be upper bounded by:
     \begin{align*}
     R^{NS}_{T} \leq \frac{2R^{2}}{\eta\kappa}+\frac{3RV_{T}}{\eta\kappa}+  \frac{\eta L^{2}Td}{2\kappa}+\frac{L(d-1)V_{T}}{\kappa}+\frac{2\eta(d-1)L^{2}T}{\kappa}\end{align*}
      \item If \ref{B5} holds, the stationary regret of Algorithm \ref{DOGD-SC} with a constant step-size $0<\eta_{t}=\eta< \frac{1}{\bar{\alpha}}=\frac{2\kappa}{\left(d+2d^{\frac{1}{2}}(d-1)\right)\Gamma}$ can be upper bounded by:
\begin{align*}
R^{NS}_{T} \leq \frac{\bar{b}^{2}+2\bar{a}\bar{c}+\bar{b} \left(\bar{b}^{2} +
 4\bar{a}\bar{c}\right)^{\frac{1}{2}}}{2\bar{a}^{2}},
\end{align*}
where $\bar{a} = 1 - \bar{\alpha} \eta > 0$, $\bar{b} = \frac{\left(2R (d-1) 
\Gamma V_{T} \right)^{\frac{1}{2}}}{\kappa},$ and $\bar{c} = \frac{2R^{2} + 3R
 V_{T}}{\kappa}$.
\end{enumerate}
\end{corollary}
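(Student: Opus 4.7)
The plan is to obtain Corollary \ref{cor:01} as a direct specialization of Theorem \ref{theo:02} to the noise-free regime $r_t = \nabla f_t(x_t)$, so that $m_t \equiv 0$, the deterministic error bounds $\delta_t$ can be taken equal to zero, and therefore $\Delta_T = \Lambda_T = 0$.

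For part (i) this is essentially immediate: I would plug $\Delta_T = \Lambda_T = 0$ into \eqref{eq:R1}, so the two sums $\frac{\eta d}{2\kappa}\Lambda_T$ and $\frac{\eta d L + 2R + 2\eta(d-1)L}{\kappa}\Delta_T$ drop out, and then split $\frac{3R + \eta L(d-1)}{\eta\kappa}V_T$ and $\frac{L^2 d + 4(d-1)L^2}{2\kappa}\eta T$ into the announced summands. The claim then matches verbatim.

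For part (ii) the same substitution immediately gives $b = \bar b$ and $c = \bar c$ in \eqref{eq:R2}. The only delicate point is to sharpen $a = 1-\alpha\eta$ to $\bar{a} = 1-\bar\alpha\eta$ with the smaller constant $\bar\alpha = \frac{(d + 2d^{1/2}(d-1))\Gamma}{2\kappa}$. To do this I would retrace the derivation of the coefficient of $R^{NS}_T$ on the left-hand side of the second-order inequality built in the proof of Theorem \ref{theo:02}(ii). One piece is $\frac{\eta d \Gamma}{2\kappa}$, obtained by applying the weak-smoothness bound to $\sum_t \|\nabla f_t(x_t)\|^2$; the other comes from $\sum_t \|\nabla f_t(x_t)\|\|x_t - x_{t'}\|$ via \eqref{eq:b10}-\eqref{eq:b11}. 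In \eqref{eq:b10} the factor $2$ in $2d(d-1)\eta^2$ arises exclusively from the estimate $\|\nabla f_k(x_k)+m_k\|^2 \leq 2\|\nabla f_k(x_k)\|^2 + 2\|m_k\|^2$; when $m_k \equiv 0$ the convexity-of-norm step collapses to $\|\sum_{k \in \FF_i} \nabla f_k(x_k)\|^2 \leq |\FF_i| \sum_{k \in \FF_i} \|\nabla f_k(x_k)\|^2$ without any factor $2$, so the bound on $\sum_t \|x_t - x_{t'}\|^2$ tightens to $\Gamma d(d-1)^2 \eta^2 R^{NS}_T$ and, via Cauchy-Schwarz, the contribution of $\sum_t \|\nabla f_t(x_t)\|\|x_t-x_{t'}\|$ to the coefficient of $R^{NS}_T$ becomes $\frac{d^{1/2}(d-1)\eta \Gamma}{\kappa}$ in place of $\frac{(2d)^{1/2}(d-1)\eta\Gamma}{\kappa}$. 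Adding the two pieces gives precisely $\bar\alpha \eta$, and solving $\bar a\,R^{NS}_T - \bar b\,(R^{NS}_T)^{1/2} \leq \bar c$ exactly as at the end of Theorem \ref{theo:02} yields the announced bound.

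The only real obstacle is carefully verifying this factor-of-two improvement inside \eqref{eq:b10}, which is what allows the step-size regime to be widened from $\eta < 1/\alpha$ to $\eta < 1/\bar\alpha$; everything else is a mechanical substitution $\Delta_T = \Lambda_T = 0$ followed by the same second-order-inequality solving step already carried out in the proof of the theorem.
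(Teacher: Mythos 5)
Your proposal is correct and follows essentially the same route as the paper: the paper's proof is precisely the substitution $m_t\equiv 0\Rightarrow\delta_t=0\Rightarrow\Lambda_T=\Delta_T=0$ in \eqref{eq:R1} and \eqref{eq:R2}, together with the observation that the factor $2$ in \eqref{eq:b10} is no longer needed, which is exactly your factor-of-two sharpening yielding $\bar\alpha$ and $\bar a$. Your write-up just makes explicit the recomputation of the coefficient of $R^{NS}_T$ that the paper leaves implicit.
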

\begin{proof}
If $m_{t}=0$, then $\delta_{t}=0$, leading to $\Lambda_{T}=\Delta_{T}=0$. 
Moreover, in \eqref{eq:b10}, it is unnecessary to impose an upper bound of $2$, 
which will impact the computation of $\alpha$ and subsequently affect the value of
 $\bar{a}$.\hfill
\end{proof}

\medskip

\begin{remark}\label{R:4.4} 
In this remark, we only focus on the implications of Corollary \ref{cor:01} under the condition that the error is zero. In the next subsection, we will consider the case where the error is not necessarily zero.
\begin{enumerate}
\item If $f_{t}$ is $L$-Lipschitz for every $t \in [T]$, then the regret bound \eqref{eq:R1} can be interpreted as follows: The term $\frac{2R^{2}}{\eta\kappa}$ represents the initialization error, which decreases as the step-size $\eta$ increases. The second term, $\frac{3RV_{T}}{\eta\kappa}$, incorporates the cumulative path variation $V_{T}$ and also diminishes with a larger $\eta$. In contrast, the terms $\frac{\eta L^{2} Td}{2\kappa}$ and $\frac{2\eta(d-1)L^{2} T}{\kappa}$ grow with $\eta$, suggesting a trade-off between these terms and the first two. The term $\frac{L(d-1)V_{T}}{\kappa}$ highlights that, in the presence of delay ($d > 1$), there is an influence of both the cumulative path variation and the delay, independent of the choice of the step-size, which is different with respect to \cite{pun2024online}.
\item\label{R:4.4(ii)} If $V_{T}$ grows sub-linearly, i.e., $V_{T}=\mathcal{O}(T^{\alpha})$ with $\alpha < 1$, then selecting $\eta=\mathcal{O}(T^{\frac{\alpha-1}{2}})$ ensures a sub-linear regret bound of $\mathcal{O}(dT^{\frac{1+\alpha}{2}})$, as stated in Theorem \ref{theo:02}. Specifically, the optimal regret bound is obtained by setting
$$
\eta=\left(\frac{2R(2R+3V_{T})}{TL^{2}(5d-4)}\right)^{\frac{1}{2}}.
$$
With this choice of $\eta$, Theorem \ref{theo:02} yields a regret bound of 
$$
\frac{\left(2RTL^{2}(5d-4)(2R+3V_{T})\right)^{\frac{1}{2}}}{\kappa}+\frac{L(d-1)V_{T}}{\kappa}.
$$
However, this choice of $\eta$ requires prior knowledge of $V_{T}$, which may not always be available.
\item If $d=1,$ meaning that there is not delay, then \eqref{eq:R2} becomes,
$$R^{NS}_{T} \leq \frac{2R^{2}}{\eta\kappa}+\frac{3RV_{T}}{\eta\kappa}+  \frac{\eta L^{2}Td}{2\kappa},$$
which is the same regret obtained in \cite[Theorem 1 (ii)]{pun2024online}. When $d\geq1$ we extend the results for quasar-convex functions.
\item If $f_{t}$ is $\Gamma$-weakly smooth for every $t \in [T]$, then the regret bound \eqref{eq:R1} can be viewed as the square of the sum of two terms arising from solving a second-order inequality. The first term, $b$, represents the square root of the cumulative path variation and is not null if there exists a delay $(d > 1)$. The second term, $(b^{2}+4ac)^{\frac{1}{2}}$, includes both $b$ and the initialization error along with $V_{T}$. Both terms are dominated by a factor of order $V_{T}^{\frac{1}{2}}$, making the overall regret grow as $\mathcal{O}(dV_{T})$.  Then, if $V_{T}$ grows sub-linearly, the algorithm  ensures a sub-linear regret bound. Since the feasible set is bounded $V_{T}$ is at most linear. Then, similar to \cite[Theorem 1]{pun2024online}, the order of the regret for $\Gamma$-weakly smooth function is better than the one of $L$-Lipschitz. Furthermore, the choice of the step-size $\eta$ is crucial: a smaller $\text{a}=1-\alpha \eta$ can impact the regret bound significantly, as it appears in the denominator, thereby magnifying the overall regret. But, in this setting, the choice of $\eta$ does not require prior knowledge of $V_{T}.$
\item If $d=1$, indicating no delay, then \eqref{eq:R2} simplifies to
$$ R^{NS}_{T} \leq \frac{c}{a} = \frac{2 R^{2} + 3 R V_{T}}{(1 - \alpha \eta) \kappa},$$
which matches the regret bound given in \cite[Theorem 1 (i)]{pun2024online}. Furthermore, the step-size $\eta$ must satisfy the same condition, i.e., $\eta \in \left]0, \frac{2\kappa}{\Gamma}\right[.$
\item Note that for $\kappa=1$ and under the assumption of differentiability, our results apply to star-convex functions with delay. To the best of our knowledge, this is the first work to establish a non-stationary regret bound for this class of functions in the presence of delays. A similar result holds for convex functions; however, in the case where $d=1$, we do not recover the bounds proposed by \cite{besbes2015non}, which are of the order $\mathcal{O}\left(V_{T}^{\frac{2}{3}}T^{\frac{1}{3}}\right)$. In the delayed setting, we extend the work of \cite{wan2023non} by providing non-stationary regret bounds for quasar-convex functions but assuming that $f_{t}$ is $\Gamma$-weakly smooth.
\item  For both regret bounds, a decrease in the constant $\kappa$ indicates that the functions become "more non-convex" resulting in larger regret bounds. This highlights the impact of the level of non-convexity on the performance of the algorithm. 
\end{enumerate}
\end{remark}

\subsection{Extension to the Bandit Setting with Delays}

In this section, we study the zeroth-order version of Algorithm \ref{DOGD-SC}, which allows the method to address more complex scenarios where decision-making relies on partial gradient information and delayed feedback.  To handle the bandit setting, following previous studies \cite{agarwal2010optimal, saha2011improved, wan2022online}, we introduce two changes to Algorithm \ref{DOGD-SC}.\medskip

\noindent First, instead of querying the full gradient $\nabla f_{t}(x_{t})$, we estimate it by querying the function $f_{t}$ at $(p+1)$ points: $x_{t}$ and $\left\{x_{t}+h_{t} e_{i}\right\}_{i=1}^{p},$ where $\{e_{i}\}_{i=1}^{p}$ represents the canonical  basis and $h_{t}>0$ is the discretization parameter. The gradient approximation is given by:
\begin{align}\label{zeroth order}
    r_{t}=\sum\limits_{i=1}^{p} \frac{f_{t}(x_{t}+h_{t} e_{i})-f_{t}(x_{t})}{h_{t}} e_{i},
\end{align}
and arrives at the end of round $t'$.\medskip

\noindent Second, we ensure the feasibility of these points, as $x_{t}$ is feasible, but $\left\{x_{t}+h_{t} e_{i}\right\}_{i=1}^{p}$ may not be. To maintain feasibility, we assume that there exists a $\bar{r}>0$ such that $\mathbb{B}(0,\bar{r})\subset \XX$ and restrict the feasible set to a subset of the original decision set $\XX$, defined as:
\begin{align*}
    \XX_{h}=\left\{\left(1-\frac{h}{\bar{r}}\right)x \mid x \in \XX \right\},
\end{align*}
where $h=\max\limits_{t\in [T]}h_{t} \in ]0, \bar{r}[$. \cris{Since $0\in \XX$ we have that $\XX_{h}\subset \XX$. This ensures that for any $x_{t} \in \XX_{h}\subset \XX$ and for all $i \in [p]$, the point $x_{t}+h_{t}e_{i}$ remains feasible.}\medskip

\begin{theorem}
\label{theo:03}
Let assumptions \ref{B4} and \ref{B3} hold, and assume that each loss function $f_{t} \colon \XX \to \RR$ is $G$-smooth. \cris{Moreover,  assume that there exists a $\bar{r}>0$ such that $\mathbb{B}(0,\bar{r})\subset \XX$.} Then, the stationary regret of Algorithm \ref{DOGD-SC} with a constant step-size $0 < \eta_{t}=\eta < \frac{1}{\alpha_1}=\frac{\kappa}{\left(d+4d^{\frac{1}{2}}(d-1)\right)G}$, projecting over $\XX_{h}$, and $r_{t}$ as in \eqref{zeroth order}, can be upper bounded by:
\begin{align*}
R^{NS}_{T} \leq \frac{b_{1}^{2}+2a_{1}c_{1}+b_{1} \left(b^{2}_{1}+4a_{1}c_{1}\right)^{\frac{1}{2}}}{2a_{1}^{2}},
\end{align*}
where $a_{1}=1-\alpha_{1}\eta>0$, 
$$b_{1}= \frac{\left(2R(d-1)\Gamma V_{T}\right)^{\frac{1}{2}}}{\kappa}+\frac{(2d)^{\frac{1}{2}}(d-1)\eta+\eta d}{\kappa}\left(\Gamma \bar{\Lambda}_{T}\right)^{\frac{1}{2}},\hspace{2mm}\text{ and }\hspace{2mm}c_{1}=\frac{2R^{2}}{\kappa}+\frac{3RV_{T}}{\kappa}+ \frac{\eta d}{2\kappa}\bar{\Lambda}_{T}+ \frac{2R}{\kappa} \bar{\Delta}_{T}.$$
with $\bar{\Lambda}_{T}=\frac{pG^{2}}{4}\sum\limits_{t=1}^{T}h_{t}^{2}$ and $\bar{\Delta}_{T}=\frac{p^{\frac{1}{2}}G}{2}\sum\limits_{t=1}^{T}h_{t}.$
\end{theorem}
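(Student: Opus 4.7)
The plan is to recognize Theorem \ref{theo:03} as a direct instantiation of Theorem \ref{theo:02} (ii), where the coordinate-wise finite-difference estimator $r_t$ from \eqref{zeroth order} plays the role of a noisy gradient $r_t=\nabla f_t(x_t)+m_t$. Two ingredients are needed: a uniform bound $\|m_t\|\le\delta_t$ with $\delta_t=\tfrac{\sqrt{p}\,G h_t}{2}$, so that $\sum_t\delta_t=\bar{\Delta}_T$ and $\sum_t\delta_t^{2}=\bar{\Lambda}_T$; and a weak-smoothness constant $\Gamma$ under which the step-size condition and the constants $a,b,c$ of Theorem \ref{theo:02} (ii) translate to $a_1,b_1,c_1$. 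Once these are verified, the conclusion will follow by solving the same quadratic inequality $a_1 R_T^{NS}-b_1\sqrt{R_T^{NS}}\le c_1$ exactly as in the last line of that proof.

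For the pointwise error, $G$-smoothness of $f_t$ yields, for each canonical direction $e_i$,
\begin{equation*}
\bigl|f_t(x_t+h_t e_i)-f_t(x_t)-h_t\,\scal{\nabla f_t(x_t)}{e_i}\bigr|\le \tfrac{G h_t^{2}}{2},
\end{equation*}
so each coordinate of $r_t-\nabla f_t(x_t)$ is bounded by $\tfrac{Gh_t}{2}$ in absolute value, and summing across $p$ coordinates gives $\|m_t\|\le\tfrac{\sqrt{p}\,Gh_t}{2}$; summing over $t$ produces $\bar{\Delta}_T$ and $\bar{\Lambda}_T$ verbatim. For the weak-smoothness constant, the standard consequence of $G$-smoothness at a minimizer $\nabla f_t(x_t^{*})=0$, namely $\|\nabla f_t(x)\|^{2}\le 2G\bigl(f_t(x)-f_t(x_t^{*})\bigr)$, delivers $\Gamma=2G$. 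Substituting $\Gamma=2G$ into $\bar{\alpha}=(d+4d^{1/2}(d-1))\Gamma/(2\kappa)$ from Theorem \ref{theo:02} (ii) returns exactly $\alpha_1=(d+4d^{1/2}(d-1))G/\kappa$ and the stated step-size window, while a term-by-term inspection shows that $b,c$ become $b_1,c_1$ once $\Lambda_T\mapsto\bar{\Lambda}_T$ and $\Delta_T\mapsto\bar{\Delta}_T$.

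The principal obstacle will be the projection layer. In Theorem \ref{theo:02} (ii) the non-expansiveness of $P_{\XX}$ is invoked against the comparator $x_t^{*}\in\XX$ in the telescoping argument leading to \eqref{eq:c8}, whereas the bandit variant projects onto the shrunken set $\XX_h\subsetneq\XX$ while $x_t^{*}\in\argmin_{\XX}f_t$ need not lie in $\XX_h$. The natural remedy is to introduce the scaled comparator $\tilde{x}_t^{*}=(1-h/R)x_t^{*}\in\XX_h$, apply non-expansiveness of $P_{\XX_h}$ at $\tilde{x}_t^{*}$, and control the gap $\|x_t^{*}-\tilde{x}_t^{*}\|\le h$ via $G$-smoothness and \ref{B4}, so that any additional discrepancy is of lower order in $h_t$ than the already-present $\bar{\Delta}_T,\bar{\Lambda}_T$ terms and is absorbed into the constants. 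Once this bookkeeping is settled, the proof of Theorem \ref{theo:02} (ii) carries over with the substitutions above, yielding the claimed regret bound.
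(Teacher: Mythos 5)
Your core argument coincides with the paper's proof: by $G$-smoothness (the standard second-order Taylor estimate), each coordinate of $r_t-\nabla f_t(x_t)$ is at most $\tfrac{Gh_t}{2}$, hence $\|m_t\|\le\tfrac{p^{1/2}Gh_t}{2}$, which gives exactly $\bar{\Delta}_{T}$ and $\bar{\Lambda}_{T}$; $G$-smoothness implies $2G$-weak smoothness, so $\Gamma=2G$ turns $\bar{\alpha}$ of Theorem \ref{theo:02}(ii) into $\alpha_1$ and $b,c$ into $b_1,c_1$; and the bound then follows by invoking Theorem \ref{theo:02}(ii) verbatim. This is precisely how the paper concludes.

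Your third paragraph goes beyond the paper, which applies Theorem \ref{theo:02}(ii) directly and never addresses the mismatch between projecting onto $\XX_h$ and comparing against $x_t^{*}\in\argmin_{\XX}f_t$, which need not lie in $\XX_h$; flagging this is legitimate. However, your quantitative claim that the resulting discrepancy is \emph{of lower order} in $h_t$ and can be absorbed into the constants is not justified as stated: replacing $x_t^{*}$ by $\tilde{x}_t^{*}=(1-h/R)x_t^{*}$ costs $f_t(\tilde{x}_t^{*})-f_t(x_t^{*})$ per round, which under $G$-smoothness and \ref{B4} is $\mathcal{O}(h)$ per round with $h=\max_t h_t$, i.e.\ $\mathcal{O}(Th)$ in total --- at least as large as $\bar{\Delta}_{T}=\tfrac{p^{1/2}G}{2}\sum_t h_t$, and possibly much larger when $h_t$ decays. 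So this correction would alter the stated constants rather than vanish; to obtain the theorem exactly as written one must (as the paper implicitly does) read Theorem \ref{theo:02}(ii) with decision set $\XX_h$, so that the comparators and $V_T$ are understood relative to $\XX_h$, or else accept an extra additive $\mathcal{O}(Th)$ term in the bound.
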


\begin{proof}
Since $ \nabla f_{t}$ is $G$-Lipschitz, it follows from \cite[Subsection 1.1.2]{polyak1987introduction}, that for every $x \in \RR^{p}$,  $t\in [T]$, and $i\in [p]$
\begin{align*}
\left| f_{t}(x_{t}+h_{t}e_{i})-f_{t}(x_{t})-h_{t} \scal{\nabla f_{t}(x_{t})}{e_{i}}  \right| \leq \frac{G h_{t}^{2}}{2},
\end{align*}
which implies that
\begin{align*}
 \left\|\sum\limits_{i=1}^{p} \left[\frac{f_{t}(x_{t}+h_{t} e_{i})-f_{t}(x_{t})}{h_{t}}
 e_{i}\right]-\nabla f_{t}(x_{t})\right\|=&\sqrt{\sum\limits_{i=1}^{p} \left( \frac{f_{t} 
 (x_{t} + h_{t}e_{i})-f_{t}(x_{t})}{h_{t}}-\scal{\nabla f_{t}(x_{t})}{e_{i}} 
 \right)^{2}} \leq \frac{p^{\frac{1}{2}} G h_{t}}{2}.
\end{align*}
\cris{Moreover, since $0\in \XX$ we have that $x_{t} \in\XX_{h}\subset \XX$. Thus, for any $t\in [T]$ and $i \in [p]$, the point $x_{t}+h_{t}e_{i}\in\XX$.} Then, we can set $\delta_{t}\equiv \frac{p^{\frac{1}{2}}Gh_{t}}{2}$, which gives that $\bar{\Lambda}_{T}=\frac{pG^{2}}{4}\sum\limits_{t=1}^{T}h_{t}^{2}$ and $\bar{\Delta}_{T}=\frac{p^{\frac{1}{2}}G}{2}\sum\limits_{t=1}^{T}h_{t}.$ On the other hand, from \cite[Proposition 1]{pun2024online} $G$-smooth implies $2G$-weakly smooth. We conclude the proof applying Theorem \ref{theo:02} \ref{T:Weakly-smooth}.\hfill
\end{proof}

\medskip

\begin{remark} 
In this remark, we explore the implications of our formulation.
\begin{enumerate}
    \item \cris{It follows from \cite[Proposition~1]{pun2024online} that $G$-smoothness implies $2G$-weak smoothness. Hence, Theorem~\ref{theo:02}\ref{T:Weakly-smooth} can be applied under assumption, that every $f_{t}$ is $G$-smooth in place of assumption~\ref{B5}.}

    \item By  Theorem \ref{theo:03}, for every $x \in \RR^p$ and sufficiently small $h_{t}> 0$, we have:
    $$\sum\limits_{i=1}^{p}\left[ \frac{f_{t}(x_{t}+h_{t} e_{i})-f_{t}(x_{t})}{h_{t}} e_{i} \right]-\nabla f(x)=\mathcal{O}(h_{t}^{2}).$$
    Therefore, when $ h_ {t}> 0 $, the gradient approximation is biased, so our method is more flexible than the stochastic methods that require unbiased estimators, such as \cite{pun2024online}.
    \item Theorem \ref{theo:02} also applies to a $2p$-point gradient estimator:
    $$\sum\limits_{i=1}^{p} \left[ \frac{f(x_{t}+h_{t} e_{i})-f_{t}(x_{t}-h_{t} e_{i})}{2h_{t}} e_{i} \right].$$
From \cite[Subsection 1.1.2]{polyak1987introduction}, for every $x \in \RR^p$ and $i \in [p]$:
\begin{align*}
 & \hspace{1.0cm} \left| f(x+h e_{i})-f(x-h e_{i})-2h \scal{\nabla f(x)}{e_{i}} \right| 
 \leq G h^{2} \\ 
 & \Longrightarrow \, \left\| \sum\limits_{i=1}^{p} \left[ \frac{f(x+h e_{i})-f_{t} (x_{t}-he_{i})}{2h} e_{i} \right]-\nabla f(x) \right\| \leq \frac{p^{\frac{1}{2}} G h}{2},
\end{align*}
and we proceed analogously to Theorem \ref{theo:03}. However, the main drawback 
is the increased number of function queries required.
\item From the definitions of $a_1$, $b_1$, and $c_1$, we deduce that $a_1$ is constant, $b_1$ scales with the maximum of $\bar{\Lambda}_{T}$ and $V_{T}^{\frac{1}{2}}$, and $c_1$ scales with the maximum of $\bar{\Delta}_{t}$, $\bar{\Lambda}_{T}$, and $V_{T}$. Consequently, the overall regret grows as $\mathcal{O}(\max\{V_{T}, \bar{\Delta}_{T},\bar{\Lambda}_{T}\})$, making the choice of $\left\{h_{t}\right\}_{t\in [T]}$ crucial. If $V_{T}$ grows sub-linearly, i.e., $V_{T}=\mathcal{O}(T^\alpha)$ with $\alpha < 1$, selecting $h_{t} \equiv \mathcal{O}(T^{\alpha-1})$ ensures a sub-linear regret of $\mathcal{O}(T^\alpha)=\mathcal{O}(V_{T})$. However, this choice of $h_{t}$ requires prior knowledge of $V_{T}$, which may not always be available.
\item \cris{We extend the results of \cite{wan2023non} to quasar-convex functions in the bandit setting. In addition, we complement the results of \cite[Theorem~2]{wan2022online}, which address the bandit setting with delays and stationary regret for strongly convex,  to quasar-convex functions in the bandit setting with delays and dynamic regret. Furthermore, it is important to note that the sum of quasar-convex functions may not be quasar-convex, which means that we could not directly apply the techniques of \cite[Theorem 2]{wan2022online}.}
\end{enumerate}
\end{remark}

\medskip

\noindent Now, we study the convergence of the delayed zeroth-order method for strongly quasar-convex functions in an offline setting, which will be applied to determine the minimizer for applications in quadratic fractional functions. 
\medskip

\begin{corollary}\label{cor1}
    Let $f \colon \XX \to \RR$ be $(\kappa, \mu)$-strongly quasar-convex with respect to $x^{*} \in \argmin\limits_{x \in \XX} f(x)$, and assume that $f$ is $G$-smooth.  Let $\left\{r_{t}\right\}_{t \in \NN}$ be defined as in \eqref{zeroth order} by the sequence $\left\{h_{t}\right\}_{t \in \NN}$, where $h_t$ is upper bounded by a constant $h$. Let the convex decision set $\XX$ be bounded with radius $R$. Consider the iterates $\left\{x_{t}\right\}_{t \in \NN}$ generated by Algorithm \ref{DOGD-SC} using a constant step-size $0 < \eta_{t} = \eta < \frac{1}{\alpha_1} = \frac{\kappa}{\left(d + 4 d^{\frac{1}{2}}(d-1)\right)G}$, where $f_{t} \equiv f$, with projection onto $\XX_{h}$. Assume further that $\bar{\Delta}_{t}$ and $\bar{\Lambda}_{t}$ are sub-linear terms.
Then, $X_{t} \to x^{*}$ as $t \to +\infty$, where $X_{T} = \frac{1}{T} \sum\limits_{k=1}^{T} x_{t}$ for all $T > 0$.
\end{corollary}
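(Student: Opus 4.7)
The plan is to specialize Theorem~\ref{theo:03} to the offline setting to obtain vanishing of the average functional gap, and then to convert this into strong convergence of the Ces\`aro iterate via a quadratic growth inequality implied by strong quasar-convexity.

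First, I set $f_t \equiv f$, so $x_t^{*} = x^{*}$ for every $t$ and the cumulative path variation satisfies $V_T = 0$. The non-stationary regret collapses to $R_T^{NS} = \sum_{t=1}^{T} (f(x_t) - f(x^{*}))$, and in Theorem~\ref{theo:03} the constant $a_1 > 0$ is fixed while $b_1 = \tfrac{(2d)^{1/2}(d-1)\eta + \eta d}{\kappa}(\Gamma \bar{\Lambda}_T)^{1/2}$ and $c_1 = \tfrac{2R^{2}}{\kappa} + \tfrac{\eta d}{2\kappa}\bar{\Lambda}_T + \tfrac{2R}{\kappa}\bar{\Delta}_T$. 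The sub-linearity assumption on $\bar{\Lambda}_T$ and $\bar{\Delta}_T$ forces $b_1^{2} = o(T)$ and $c_1 = o(T)$, hence the whole regret bound is $o(T)$, and dividing by $T$ gives $\tfrac{1}{T}\sum_{t=1}^{T}(f(x_t) - f(x^{*})) \to 0$.

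Second, I would establish a quadratic growth inequality: there is a constant $c = c(\kappa,\mu) > 0$ such that $f(x) - f(x^{*}) \geq c\|x - x^{*}\|^{2}$ for every $x \in \XX$. To prove it, fix $x$, set $y_t := x^{*} + t(x - x^{*})$, and let $h(t) := f(y_t) - f(x^{*}) \geq 0$ for $t \in [0,1]$. Applying the defining strong quasar-convex inequality at $y_t$ gives $h'(t) \geq \tfrac{\kappa}{t} h(t) + \tfrac{\kappa\mu t}{2}\|x - x^{*}\|^{2}$. Multiplying by the integrating factor $t^{-\kappa}$ produces $(t^{-\kappa}h(t))' \geq \tfrac{\kappa\mu}{2} t^{1-\kappa}\|x - x^{*}\|^{2}$; integrating from $\varepsilon$ to $1$ and letting $\varepsilon \to 0^{+}$ (the boundary term vanishes because $G$-smoothness together with $\nabla f(x^{*}) = 0$ forces $h(\varepsilon) = O(\varepsilon^{2})$, so $\varepsilon^{-\kappa} h(\varepsilon) = O(\varepsilon^{2-\kappa}) \to 0$) yields
\begin{equation*}
f(x) - f(x^{*}) \;\geq\; \frac{\kappa\mu}{2(2-\kappa)}\,\|x - x^{*}\|^{2}.
\end{equation*}

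Finally, combining the two steps,
\begin{equation*}
\frac{1}{T}\sum_{t=1}^{T}\|x_t - x^{*}\|^{2} \;\leq\; \frac{2(2-\kappa)}{\kappa\mu}\cdot\frac{1}{T}\sum_{t=1}^{T}\bigl(f(x_t) - f(x^{*})\bigr) \;\longrightarrow\; 0,
\end{equation*}
and convexity of $v \mapsto \|v - x^{*}\|^{2}$ together with Jensen's inequality gives $\|X_T - x^{*}\|^{2} \leq \tfrac{1}{T}\sum_{t=1}^{T}\|x_t - x^{*}\|^{2} \to 0$, so $X_T \to x^{*}$. The main obstacle is the quadratic growth step: the ODE-comparison argument is elementary, but it relies on $\nabla f(x^{*}) = 0$, which is guaranteed because the definition of $(\kappa,\mu)$-strong quasar-convexity is taken with respect to $x^{*} \in \argmin_{\RR^{p}} f$; everything else reduces to plugging $V_T = 0$ into Theorem~\ref{theo:03} and applying Jensen.
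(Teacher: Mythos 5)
Your proposal is correct and follows the same skeleton as the paper's argument: specialize the delayed bandit regret bound of Theorem \ref{theo:03} with $f_t\equiv f$ (so $V_T=0$), use sub-linearity of $\bar{\Delta}_T,\bar{\Lambda}_T$ to get $R_T^{NS}=o(T)$, convert the averaged functional gap into $\frac{1}{T}\sum_t\|x_t-x^*\|^2\to 0$, and finish with Jensen/convexity of $\|\cdot-x^*\|^2$. The one genuinely different ingredient is the middle step: the paper imports \cite[Lemma 1 (ii)]{pun2024online}, which combines strong quasar-convexity with weak smoothness (via Cauchy--Schwarz and $\|\nabla f(x)\|^2\le 2G(f(x)-f(x^*))$) to get $\|x_t-x^*\|^2\le \frac{8G}{\kappa^2\mu^2}(f(x_t)-f(x^*))$, whereas you derive a self-contained quadratic growth inequality $f(x)-f(x^*)\ge \frac{\kappa\mu}{2(2-\kappa)}\|x-x^*\|^2$ by an ODE-comparison along the segment $[x^*,x]$. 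Your route buys a constant independent of $G$ and makes the growth property transparent (for $\kappa<1$ the boundary term already vanishes by mere differentiability, since $h(\varepsilon)=O(\varepsilon)$ gives $\varepsilon^{-\kappa}h(\varepsilon)=O(\varepsilon^{1-\kappa})$; $G$-smoothness is only truly needed at $\kappa=1$), while the paper's route is a one-line citation with a constant that is uniform in $\kappa\in(0,1]$ without any limiting argument. One caveat to state explicitly: your boundary-term estimate uses $\nabla f(x^*)=0$, which is automatic for an unconstrained minimizer as in the paper's Definition of quasar-convexity, but the corollary takes $x^*\in\argmin_{x\in\XX}f$ with $\XX$ bounded, so a boundary minimizer is not literally covered by that justification; this is not a defect relative to the paper, since the cited lemma's weak-smoothness inequality with respect to $x^*$ implicitly forces $\nabla f(x^*)=0$ as well, but it is worth flagging (or restricting to $\kappa<1$, where your argument needs no such hypothesis).
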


\medskip

\begin{proof}
 By the convexity of the norm, the $(\kappa, \mu)$-strong quasar-convexity of $f$ with respect to $x^{*}$, and \cite[Lemma 1 (ii)]{pun2024online}, for any $T > 0$, we obtain:
\begin{align}\label{R: convergence of iterates}
\|X_T - x^{*}\|^2 \leq \frac{\sum\limits_{t=1}^{T}\|x_t - x^{*}\|^2}{T} \leq \frac{8G \sum\limits_{t=1}^{T}(f(x_t) - f(x^{*}))}{T\kappa^2 \mu^2} \leq \frac{8G R_T}{T\kappa^2 \mu^2}.
\end{align}
Since $f_t \equiv f$ we obtain that $V_T = 0$. Moreover, since  $\bar{\Lambda}_T$ and $\bar{\Delta}_T$ are sub-linear terms, the the right-hand side of \eqref{R: convergence of iterates} tends to zero, thereby ensuring the convergence of the average iterate.
 \hfill
\end{proof}

\medskip

\begin{remark}
   As far as we know, this is the first theorem that proved the convergence of the iterates of a delayed zeroth-order version of gradient descent for quasar-convex functions. However, the proof relies on making the regret tends to zero over time, which does not fully exploit the advantages of $(\kappa,\mu)$-strong quasar-convexity. For example, in \cite[Proposition 2]{pun2024online}, it is shown that under similar assumptions, an exponential convergence rate can be achieved when the full gradient is available and there is no delay.
\end{remark}

\section{Examples and Applications}\label{sec:ea}

\subsection{New Examples of Quasar-Convex Functions}

In this section, we prove that strongly quasiconvex functions are (strongly) 
quasar-convex on convex and compact sets. To do this, we first present the 
following result, its easy proof is omitted. 
\medskip
\begin{proposition}\label{prop:lc}
 Let $f$ be a differentiable function such that there exists $\gamma, \kappa > 0$ 
 such that
\begin{align}
 & f(x) \leq f(y) ~ \Longrightarrow ~ \scal{\nabla f(y)}{x-y}
  \leq -\frac{\gamma}{2}\|x-y\|^{2}, \label{gen:char} \\
 & f(x) \leq f(y) ~ \Longrightarrow ~ \scal{\nabla f(y)}{x-y}\leq\kappa (f(x)-f(y)). \label{kappa:qc}
\end{align}
Then
\begin{align*}\label{s:qcx}
 f(x) \leq f(y) ~ \Longrightarrow ~  f(x) \geq f(y)+\frac{2}{\kappa} \scal{\nabla f(y)}{x-y}+ \frac{\gamma}{2 \kappa}\|x-y\|^{2}.
\end{align*}
\end{proposition}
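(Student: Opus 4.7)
The statement says its proof is easy, and indeed the plan is to combine the two hypotheses \eqref{gen:char} and \eqref{kappa:qc} by a simple additive manipulation, with no calculus or further convexity machinery needed.

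Fix $x,y$ with $f(x)\leq f(y)$, so that both assumed implications apply at this pair. Divide \eqref{gen:char} by $\kappa>0$ to obtain
\[
 \tfrac{1}{\kappa}\scal{\nabla f(y)}{x-y} + \tfrac{\gamma}{2\kappa}\|x-y\|^{2} \;\leq\; 0,
\]
and divide \eqref{kappa:qc} by $\kappa$ to obtain
\[
 \tfrac{1}{\kappa}\scal{\nabla f(y)}{x-y} \;\leq\; f(x)-f(y).
\]
Adding these two inequalities yields
\[
 \tfrac{2}{\kappa}\scal{\nabla f(y)}{x-y} + \tfrac{\gamma}{2\kappa}\|x-y\|^{2} \;\leq\; f(x)-f(y),
\]
which rearranges exactly to the desired inequality
\(
 f(x) \geq f(y)+\tfrac{2}{\kappa}\scal{\nabla f(y)}{x-y}+\tfrac{\gamma}{2\kappa}\|x-y\|^{2}.
\)

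There is no real obstacle here; the only thing to notice is that one should not try to ``substitute'' the bound from \eqref{gen:char} into \eqref{kappa:qc} (which would go the wrong way, since both are upper bounds on $\scal{\nabla f(y)}{x-y}$), but rather rewrite \eqref{gen:char} as a nonpositive quantity and add it to \eqref{kappa:qc}. This is what produces the extra factor $2/\kappa$ in front of the gradient term and the factor $\gamma/(2\kappa)$ in front of the squared norm, matching the conclusion.
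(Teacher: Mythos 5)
Your proof is correct: dividing \eqref{gen:char} by $\kappa$, dividing \eqref{kappa:qc} by $\kappa$, and adding the two inequalities gives exactly the claimed bound, which is precisely the elementary computation the paper has in mind when it omits the proof as easy. Nothing further is needed.
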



\medskip
\noindent In the next theorem, we prove that differentiable strongly quasiconvex functions are quasar-convex:

\medskip

\begin{theorem}\label{sqcx:qconvexT}
 Let $\XX \subseteq \RR^{p}$ be a convex and compact set, $f\colon \RR^{p} 
 \rightarrow \RR$ be a differentiable strongly quasiconvex with modulus 
 $\gamma > 0$ and $\argmin\limits_{\XX}\,f=\{x^{*}\}$. Then there exists $n_{1} 
 \in \mathbb{N}$ such that 
 \begin{align*}
  \scal{\nabla f(y)}{x^{*}-y} \leq \frac{2}{n} \left( 
  f(x^{*})-f(y) \right), ~ \forall ~ n \geq n_{1}.
 \end{align*}
 As a consequence, $f$ is quasar-convex with modulus $\kappa=\frac{2}{n_{1}+2} 
 \in \, ]0, 1[$ and $\gamma > 0$ on $\XX$.
\end{theorem}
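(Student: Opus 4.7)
The plan is to first derive a quantitative comparison between $\scal{\nabla f(y)}{x^{*}-y}$ and $f(x^{*})-f(y)$ using Lemma \ref{char:gradient} together with the compactness of $K$, and then combine the resulting inequality algebraically with the same lemma to read off the quasar-convex inequality with explicit constants.

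Since $f(x^{*})\leq f(y)$ for every $y\in K$, Lemma \ref{char:gradient} applied to the pair $(x^{*},y)$ gives the quadratic bound
\[
\scal{\nabla f(y)}{x^{*}-y}\leq -\frac{\gamma}{2}\|y-x^{*}\|^{2}, \qquad \forall\,y\in K.
\]
If one can secure a constant $M>0$ with $f(y)-f(x^{*})\leq M\|y-x^{*}\|^{2}$ for every $y\in K$, then setting $n_{1}:=\lceil 4M/\gamma\rceil$ gives, for any $y\in K$ and any $n\geq n_{1}$,
\[
\scal{\nabla f(y)}{x^{*}-y}\leq -\frac{\gamma}{2}\|y-x^{*}\|^{2}\leq -\frac{\gamma}{2M}\bigl(f(y)-f(x^{*})\bigr)\leq \frac{2}{n}\bigl(f(x^{*})-f(y)\bigr),
\]
where the last step uses $f(x^{*})-f(y)\leq 0$ together with $\tfrac{2}{n}\leq \tfrac{\gamma}{2M}$. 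This is the quantitative inequality in the statement.

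For the ``as a consequence'' part, the inequality at $n=n_{1}$ rearranges to $f(x^{*})\geq f(y)+\tfrac{n_{1}}{2}\scal{\nabla f(y)}{x^{*}-y}$; adding to it the Lemma \ref{char:gradient} bound rewritten as $\scal{\nabla f(y)}{x^{*}-y}+\tfrac{\gamma}{2}\|y-x^{*}\|^{2}\leq 0$ yields
\[
f(x^{*})\geq f(y)+\frac{n_{1}+2}{2}\scal{\nabla f(y)}{x^{*}-y}+\frac{\gamma}{2}\|y-x^{*}\|^{2},
\]
which is exactly the $(\kappa,\gamma)$-quasar-convex inequality with $\kappa=\tfrac{2}{n_{1}+2}\in(0,1)$ and the same modulus $\gamma$.

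The main obstacle is producing the constant $M$, i.e.\ a uniform quadratic \emph{upper} bound on the excess $f(y)-f(x^{*})$. On $K\setminus B(x^{*},\varepsilon)$ this is immediate by compactness and continuity of $f$ (the ratio is continuous on a compact set). The sensitive region is a neighbourhood of $x^{*}$: strong quasiconvexity only guarantees a quadratic bound from \emph{below} via the growth condition \eqref{qgc}, so the matching upper bound has to be extracted from the additional regularity in the setting, typically through continuity of $\nabla f$ on the compact $K$ combined with $\nabla f(x^{*})=0$ at an interior minimizer, so that
\[
f(y)-f(x^{*})=\int_{0}^{1}\scal{\nabla f(x^{*}+t(y-x^{*}))}{y-x^{*}}\,\mathrm{d}t
\]
together with a local Lipschitz estimate for $\nabla f$ delivers the required $\tfrac{L}{2}\|y-x^{*}\|^{2}$ bound; once $M<\infty$ is in hand, the rest of the proof is the algebraic combination above.
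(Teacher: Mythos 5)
Your argument is only conditionally complete: granted a constant $M$ with $f(y)-f(x^{*})\leq M\|y-x^{*}\|^{2}$ for all $y\in K$, the choice $n_{1}=\lceil 4M/\gamma\rceil$ and the final combination with Lemma \ref{char:gradient} (essentially Proposition \ref{prop:lc} specialized to $x=x^{*}$) do give both conclusions. But that quadratic \emph{upper} bound is exactly the crux, and it is not available under the theorem's hypotheses; in fact it is false in general. The point $x^{*}$ minimizes $f$ over $K$, not over $\RR^{p}$, and since $f$ is strongly quasiconvex on the whole space its constrained minimizer will typically lie on the boundary of $K$ with $\nabla f(x^{*})\neq 0$; then $f(y)-f(x^{*})$ grows \emph{linearly} in $\|y-x^{*}\|$ along feasible directions and no finite $M$ exists. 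Concretely, $f(x)=x^{2}$ on $\RR$ is strongly quasiconvex, $K=[1,2]$, $x^{*}=1$: here $f(y)-f(1)=(y-1)(y+1)$ is not $O((y-1)^{2})$ near $y=1$, although the asserted inequality itself does hold (with $n\geq 2$). Even at an interior minimizer your sketch needs a local Lipschitz estimate for $\nabla f$, which is an additional hypothesis: the theorem assumes only differentiability, and mere continuity of the gradient with $\nabla f(x^{*})=0$ yields $f(y)-f(x^{*})=o(\|y-x^{*}\|)$, not $O(\|y-x^{*}\|^{2})$. Indeed, the remark following Corollary \ref{sqcx:qconvexC} records that Lipschitz continuity of the gradient is the known sufficient condition on the whole space; the purpose of this theorem is precisely to avoid it by exploiting compactness. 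So the step you yourself flag as the main obstacle is left unproved and cannot be proved as stated.

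The paper's proof takes a different route that never requires dominating $f(y)-f(x^{*})$ by $\|y-x^{*}\|^{2}$: it works with the quadratic growth condition \eqref{qgc} (a \emph{lower} bound on the excess), uses compactness only to bound $f$ from above by a constant $M$ on $K$, and compares $\scal{\nabla f(y)}{x^{*}-y}\leq -\tfrac{\gamma}{2}\|x^{*}-y\|^{2}$ from Lemma \ref{char:gradient} with $\tfrac{2}{n}\left(f(x^{*})-M\right)\leq \tfrac{2}{n}\left(f(x^{*})-f(y)\right)$. If you want to keep your strategy, you must either strengthen the hypotheses (interior minimizer and Lipschitz gradient, i.e.\ a strictly smaller class) or bound $f(y)-f(x^{*})$ directly by a multiple of $\scal{\nabla f(y)}{y-x^{*}}$, which is what the statement actually asserts; as written, the proposal does not establish the theorem.
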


\begin{proof}
By Lemma \ref{exist:unique}, $\argmin\limits_{\XX}\,f=\{x^{*}\}$. Then, by equation \eqref{qgc}, we have
 \begin{align*}
  & f(x^{*})+\frac{\gamma}{4}\|x^{*}-y\|^{2} \leq f(y) ~ \Longleftrightarrow ~ 0 < \frac{\gamma}{2}\|x^{*}-y
  \|^{2} \leq 2(f(y)-f(x^{*})), ~ \forall ~ y \in \XX.
 \end{align*}
 Since $\XX$ is compact and $f$ is continuous, there exists $M>0$ such that $f(y) \leq M$ 
 for all $y \in \XX$, thus,
 \begin{align*}
  & 0 < \frac{\gamma}{2}\|x^{*}-y\|^{2} \leq 2(f(y)-f(x^{*})) \leq 2(M-f(x^{*})), ~ \forall ~ y \in \XX.
 \end{align*}
 
\noindent By Archimedean axiom, there exists $n_{1} \in \mathbb{N}$ such that,
 $$0 < \frac{2}{n} (M-f(x^{*})) \leq \frac{\gamma}{2}\|x^{*}-y\|^{2}, ~ \forall ~ n \geq n_{1}.$$
 Using now relation \eqref{gen:char} (which holds in view of Lemma \ref{char:gradient}), we have
 \begin{align*}
  0 < & ~  \frac{2}{n} (M-f(x^{*})) \leq \frac{\gamma}{2}\|x^{*}-y\|^{2} \leq \scal{\nabla f(y)}{y-x^{*}}, ~ \forall ~ n \geq n_{1}. 
 \end{align*}
  Finally, since $f(y) \leq M$ for all $y \in \XX$, we have for all $n \geq n_{1}$ that
  $$\scal{\nabla f(y)}{x^{*}-y} \leq \frac{2}{n} 
  (f(x^{*})-M) \leq \frac{2}{n} (f(x^{*})-f(y)), ~ \forall ~ y \in 
  \XX.$$
\noindent \cris{Now, since $\kappa$ must lie in the interval $]0,1]$, we choose $\kappa=\frac{2}{n_{1}+2} \in \, ]0, 1[$. Then, $f$ is $\kappa$-quasar convex on $\XX$.} \hfill
\end{proof}
\medskip 

\noindent In the unbounded case, we have the following sufficient condition:

\medskip

\begin{proposition}\label{non:decerasing}
 Let $f\colon \RR^{p} \rightarrow \RR$  be a differentiable and strongly quasiconvex with modulus $\gamma > 0$. \cris{If $\lambda \mapsto f(x+\lambda u)$, for $\lambda>0$, is non\-de\-crea\-sing for any $(x, u) \in \RR^{p}\times\RR^{p}$, then $f$ sa\-tis\-fies relation \eqref{kappa:qc}.}
\end{proposition}

\begin{proof}
 Let $(x, y) \in \XX^{2}$ be such that $x \in S_{f(y)} (f)$. Then for all $\lambda \in [0, 1]$, we have ($v=y-x$)
 \begin{align*}
  f(x) \leq f( \lambda y + (1-\lambda)x) & \leq \max\{f(y), f(x)\} - \lambda 
  (1-\lambda) \frac{\gamma}{2} \| y - x \|^{2} \\
  & \leq f(y) - \lambda (1-\lambda) \frac{\gamma}{2} \| y - x \|^{2}.
 \end{align*}
 By taking $\lambda =\frac{1}{2}$, we have
 \begin{align*}
  0 < \frac{\gamma}{2} \| y - x \|^{2} \leq 4(f(y) - f(x)), ~ \forall 
  ~ x \in S_{f(y)} (f).
 \end{align*}
 As in the previous proof, and taking into consideration relation \eqref{gen:char}, there exists $n_{0} \in \mathbb{N}$ such that
 $$0 < \frac{4}{n} (f(y) - f(x)) \leq \frac{\gamma}{2} \| y - x \|^{2}
  \leq \scal{\nabla f(y)}{y - x}, ~ \forall ~ x \in S_{f(y)} (f), ~ \forall ~ 
 n \geq n_{0},$$
 i.e., there exists $\kappa > 0$ such that $\kappa (f(y) - f(x)) \leq \scal{\nabla f(y)}{y - x}$ for all $x \in S_{f(y)} (f)$, i.e., $f$ satisfies relation \eqref{kappa:qc}. \hfill
\end{proof}

\noindent As a consequence, we have the desired result.
\medskip

\begin{corollary}\label{sqcx:qconvexC}
 Let $\XX \subseteq \RR^{p}$ be a convex set, $f\colon \RR^{p} \rightarrow \RR$ be a differentiable strongly quasiconvex with modulus 
 $\gamma > 0$ and $\argmin\limits_{\XX}\,f=\{x^{*}\}$. If at least one of the following assertions hold:
 \begin{itemize}
  \item[$(a)$] $\XX$ is bounded.
  
 \item[$(b)$] $\XX = \RR^{p}$ and \cris{$\lambda \mapsto f(x+\lambda u)$, for $\lambda>0$, is non\-de\-crea\-sing for any $(x, u) \in \RR^{p}\times\RR^{p}$.}
 \end{itemize}
 Then there exists $n_{1} \in \mathbb{N}$ such that $f$ is strongly quasar-convex with modulus
 $\kappa=\frac{1}{n_{1}+2} \in \, ]0, \frac{1}{2}[$ and $\frac{\gamma}{\kappa} > 0$ on $\XX$.
\end{corollary}

\begin{proof}
 $(a)$ (resp. $(b)$): Since $f$ is a differentiable strongly quasiconvex function with modulus $\gamma > 0$, it satisfies relation \eqref{gen:char} by Lemma \ref{char:gradient}, while in view of Theorem \ref{sqcx:qconvexT} (resp. Proposition \ref{non:decerasing}), there exists $n_{1} \in \mathbb{N}$ for which $f$ satisfies relation \eqref{kappa:qc} when $x=x^{*}$ with $\kappa=\frac{2}{n_{1}+2} \in \, ]0, 1[$. Therefore, it follows from Proposition \ref{prop:lc} (applied just for $x=x^{*}$) that 
 \begin{equation*}
  f(x^{*}) \geq f(y)+\frac{2}{\kappa} \scal{\nabla f(y)}{x^{*}-y}+\frac{\gamma}{2 \kappa}\|x^{*}-y\|^{2}, ~ \forall ~ y \in \XX,
\end{equation*}
i.e., $f$ is strongly quasar-convex with modulus $\frac{\gamma}{\kappa} > 0$ and $\kappa=\frac{1}{n_{1}+2} \in \, ]0, \frac{1}{2}[$ on $\XX$. \hfill
\end{proof}

\medskip

\begin{remark}
 \begin{enumerate}
  \item The reverse statement in the previous results does not hold in general. Indeed, the function in Example \ref{ex1} is strongly quasar-convex and it is not strongly quasiconvex (it is not even quasiconvex) since there are sublevel sets that are not convex, while the function $f(x) = (x^{2}+\frac{1}{8})^{1/6}$ is quasar-convex on $\mathbb{R}$ with $\kappa = 1/2$, and it is not strongly quasiconvex as a consequence of \cite[Theorem 1]{lara2022strongly}.

  \item Another sufficient condition for differentiable strongly quasiconvex functions to be quasar-convex on the whole space $\mathbb{R}^{p}$ is that their gradient should be Lipschitz continuous (see \cite[page 15]{lara2024characterizations}). The extension of Corollary \ref{sqcx:qconvexC} and/or Theorem \ref{sqcx:qconvexT} to the whole space remains as open problems.
 \end{enumerate}
\end{remark}

\medskip

\noindent  As noted in Corollary \ref{sqcx:qconvexC}, even when differentiable strongly quasiconvex functions are strongly quasar-convex, it could be hard to determine the parameters $\kappa \in \, ]0, 1]$ and $\gamma > 0$ of strong quasar-convexity. However, in the next proposition, we show how to compute these parameters for the case of quadratic fractional functions. \medskip

\begin{proposition}\label{prop:param}
 Let $A, B \in \RR^{p\times p}$, $a, b \in 
  \RR^{p}$, $\alpha, \beta \in \RR$, and $f\colon 
  \RR^{p} \rightarrow \RR$ be the functions given by:
  \begin{align*}
   f(x)=\frac{\ft(x)}{\fh(x)}=\frac{\frac{1}{2} \scal{Ax}{x}+\scal{a}{x}+\alpha}{\frac{1}{2} \scal{Bx}{x}+\scal{b}{x}+\beta}.
  \end{align*}
  Take $0 < m < M$ and define:
  $$K := \{x \in \RR^{p}\mid ~ m \leq \fh(x) \leq M\}.$$ 
   If $A$ is a positive definite matrix and at least one of the following 
   conditions holds:
  \begin{enumerate}
   \item[$(a)$] $B=0$ (the null matrix),
 
   \item[$(b)$] $\ft$ is nonnegative on $K$ and $B$ is negative semi-definite,
 
   \item[$(c)$] $\ft$ is non-positive on $K$ and $B$ is positive semi-definite,
 \end{enumerate}
  then the function $f$ is $\kappa$-quasar-convex on $K$ with $\kappa = \frac{m}{M}$ and is $(\frac{\kappa}{2}, \gamma)$-strongly quasar-convex on $K$, where $\gamma = \frac{\sigma_{\min}(A)}{m}$. Moreover, for every $\lambda \in ]0,1[$, $f$ is $(\lambda \kappa, (\lambda^{-1} - 1)\frac{\sigma_{\min}(A)}{4M})$-strongly quasar-convex on $K$.
\end{proposition}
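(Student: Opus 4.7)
The plan is to linearize the fractional structure of $f$ by introducing, for each fixed $y \in K$, the auxiliary quadratic $h_{y}(x) := \ft(x) - f(y)\,\fh(x)$. By the quotient rule, $\nabla h_{y}(y) = \nabla \ft(y) - f(y)\nabla \fh(y) = \fh(y)\nabla f(y)$, and the Hessian of $h_{y}$ is $A - f(y)B$. Hence a statement about $\nabla f$ translates into one about the gradient of the much simpler quadratic $h_{y}$, bypassing the nonlinearity of the ratio.

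The linear-algebraic heart of the argument---already implicit in Remark \ref{rem:exam}(2)---is to verify that in each of the three cases $A - f(y)B$ is positive definite with smallest eigenvalue at least $\sigma_{\min}(A)$. In case (a) this is trivial; in case (b) one has $f(y) = \ft(y)/\fh(y) \geq 0$ on $K$ (since $\ft \geq 0$ and $\fh > 0$) and $B \preceq 0$, so $-f(y)B \succeq 0$; case (c) is symmetric. By Weyl's inequality, $h_{y}$ is thus $\sigma_{\min}(A)$-strongly convex. Writing its strong-convexity inequality between $y$ and $x^{*}$, together with the identity $h_{y}(x) - h_{y}(y) = \fh(x)(f(x) - f(y))$ (a one-line computation from $\ft(y) = f(y)\fh(y)$ and $\ft(x) = f(x)\fh(x)$), and dividing through by $\fh(y) > 0$, yields the master estimate
\begin{equation*}
\scal{\nabla f(y)}{x^{*} - y} \;\leq\; \frac{\fh(x^{*})}{\fh(y)}\bigl(f(x^{*}) - f(y)\bigr) \;-\; \frac{\sigma_{\min}(A)}{2\,\fh(y)}\,\|x^{*} - y\|^{2}.
\end{equation*}

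From this master estimate the three assertions follow by bookkeeping. Since $f(x^{*}) - f(y) \leq 0$, the hypothesis $m \leq \fh \leq M$ on $K$ gives $\fh(x^{*})/\fh(y) \geq m/M = \kappa$ and $1/\fh(y) \geq 1/M$, so that $\scal{\nabla f(y)}{x^{*} - y} \leq \kappa(f(x^{*}) - f(y)) - (\sigma_{\min}(A)/(2M))\|x^{*} - y\|^{2}$. Dropping the quadratic term proves $\kappa$-quasar-convexity (first assertion); dividing by $\kappa$ and rearranging gives $(\kappa, \sigma_{\min}(A)/m)$-strong quasar-convexity, which trivially implies the claimed $(\kappa/2, \sigma_{\min}(A)/m)$ form, since $\scal{\nabla f(y)}{x^{*} - y} \leq 0$ makes the defining inequality monotone in the first parameter. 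For the third assertion with $\lambda \in (0, 1)$, I would interpolate: split $\scal{\nabla f(y)}{x^{*} - y} = \lambda\scal{\nabla f(y)}{x^{*} - y} + (1-\lambda)\scal{\nabla f(y)}{x^{*} - y}$, control the first summand by $\lambda\kappa(f(x^{*}) - f(y))$ via the first assertion and the second by the quadratic growth $f(y) - f(x^{*}) \geq (\sigma_{\min}(A)/(4M))\|x^{*} - y\|^{2}$ inherited from Remark \ref{rem:exam}(2) combined with \eqref{qgc}, then divide by $\lambda\kappa$ and rearrange into the required $(\lambda\kappa, (\lambda^{-1}-1)\sigma_{\min}(A)/(4M))$-strong quasar-convex form.

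The only genuinely non-routine step in the whole argument is the positive-definiteness check on $A - f(y)B$ in cases (b)--(c); once the master estimate is in hand, all three items reduce to elementary inequality manipulations combined with the bounds $m \leq \fh \leq M$.
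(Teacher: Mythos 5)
Your argument is correct, and it takes a genuinely different route from the paper for the core inequalities. The paper never introduces the auxiliary quadratic $h_{y}(x)=\ft(x)-f(y)\fh(x)$: it instead (i) invokes Remark \ref{rem:exam}(2) to get strong quasiconvexity of $f$ on $K$ with modulus $\tfrac{\sigma_{\min}(A)}{M}$ and hence a unique minimizer, (ii) proves $\kappa$-quasar-convexity with $\kappa=\tfrac{m}{M}$ via the secant characterization of Lemma \ref{L2}, bounding $f(\lambda x^{*}+(1-\lambda)x)$ by the ratio of convex combinations of $\ft$ and $\fh$, and (iii) obtains the strong quasar-convexity by adding to this the gradient inequality $\scal{\nabla f(x)}{x^{*}-x}\leq-\tfrac{\sigma_{\min}(A)}{2M}\|x-x^{*}\|^{2}$ coming from the strong quasiconvexity characterization (Lemma \ref{char:gradient}), in the spirit of Proposition \ref{prop:lc}. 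Your Dinkelbach-style linearization bypasses both Lemma \ref{L2} and Lemma \ref{char:gradient}: the single master estimate
\begin{equation*}
\scal{\nabla f(y)}{x^{*}-y}\;\leq\;\frac{\fh(x^{*})}{\fh(y)}\bigl(f(x^{*})-f(y)\bigr)-\frac{\sigma_{\min}(A)}{2\fh(y)}\|x^{*}-y\|^{2}
\end{equation*}
delivers the $\kappa$-quasar-convexity and, after dividing by $\kappa$, even the sharper $(\kappa,\tfrac{\sigma_{\min}(A)}{m})$-strong quasar-convexity, from which the stated $(\tfrac{\kappa}{2},\tfrac{\sigma_{\min}(A)}{m})$ form follows by the monotonicity in $\kappa$ you note; your interpolation for the third assertion likewise produces a quadratic coefficient $(\lambda^{-1}-1)\tfrac{\sigma_{\min}(A)}{4M}$, which is at least the $\tfrac{\gamma}{2}$ required by the statement. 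So your approach is more self-contained (the eigenvalue check on $A-f(y)B$ replaces the appeal to the fractional-programming result behind Remark \ref{rem:exam}(2) for the first two claims) and yields slightly stronger constants, while the paper's approach leans on its quasiconvexity toolkit; for the third assertion both proofs coincide in using the quadratic growth \eqref{qgc} inherited from strong quasiconvexity. Two small points you should make explicit: the existence (and, for the strong statements, uniqueness) of $x^{*}\in\argmin_{K}f$, which you use throughout, comes exactly from the strong quasiconvexity of Remark \ref{rem:exam}(2) together with Lemma \ref{exist:unique}, and, as in the paper, $A$ and $B$ are tacitly treated as symmetric so that $A-f(y)B$ is indeed the Hessian of $h_{y}$ and $\sigma_{\min}$ refers to its smallest eigenvalue.
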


\begin{proof}
Since $\fh(x) > 0$ for all $x \in \dom(\ft) \cap \XX$, we have $\dom(f) = \dom(\ft) \cap \XX=\XX$. By Remark \ref{rem:exam} \ref{3.3 (ii)}, $f$ is strongly quasiconvex on $\XX$, ensuring a unique minimizer $x^{*} \in \argmin\limits_\XX f(x)$. For every $\lambda \in [0, 1]$, it holds that  
\begin{align*}
 & \hspace{2.0cm} f(\lambda x^{*} + (1-\lambda) x) \leq \frac{\lambda \ft(x^{*}) + (1-\lambda) \ft(x)}{\lambda \fh(x^{*}) + (1-\lambda) \fh(x)} \\ 
 & \Longleftrightarrow \, f(\lambda x^{*} + (1-\lambda) x) \leq f(x) + \frac{\lambda \fh(x^{*})}{\lambda \fh(x^{*}) + (1-\lambda) \fh(x)} \left(f(x^{*}) - f(x)\right).
 \end{align*}
Using that $\fh(x^{*}) \geq m$ and $\fh(x) \leq M$, it follows that  
$$f(\lambda x^{*} + (1-\lambda) x) \leq f(x) + \lambda \frac{m}{M} \left(f(x^{*}) - f(x)\right).$$
By Lemma \ref{L2}, $f$ is $\kappa$-quasar-convex on $\XX$ with $\kappa = \frac{m}{M}$. Moreover, applying lemmas \ref{char:gradient} and \ref{L2}, we have 
$$\scal{\nabla f(x)}{x^{*} - x} \leq -\frac{\sigma_{\min}(A)}{2M} \|x - x^{*}\|^2.$$
Combining the above, we get
$$\frac{\sigma_{\min}(A)}{2M\kappa} \|x - x^{*}\|^2 + \frac{2}{\kappa} 
\scal{\nabla f(x)}{x^{*} - x} \leq \lambda \kappa \left(f(x^{*}) - f(x)\right)$$
which implies that $f$ is $(\frac{\kappa}{2}, \gamma)$-strongly quasar-convex. 
\medskip

\noindent Finally, for fixed $\lambda \in ]0,1[$, we have  
$$\scal{\nabla f(x)}{x - x^{*}} \geq \kappa \left(f(x) - f(x^{*})\right) = \kappa \lambda \left(f(x) - f(x^{*})\right) + \left(1-\lambda\right)\kappa \left(f(x) - f(x^{*})\right),$$
where the last term satisfies  
$$\left(1-\lambda\right)\kappa \left(f(x) - f(x^{*})\right) \geq \frac{\left(1-\lambda\right)\kappa \sigma_{\min}(A)}{8M} \|x - x^{*}\|^2.$$
Thus, $f$ is $\left(\lambda\kappa, \frac{\left(1-\lambda\right)\kappa\sigma_{\min}(A)}{4M\lambda}\right)$-strongly quasar-convex on $\XX$.  \hfill
\end{proof}

\medskip 

\begin{remark}
The parameter $\lambda$ introduces a trade-off between $\kappa$ and $\gamma$. By setting $\lambda = 1$, we obtain the largest constant for quasar-convexity, but losing strong quasar-convexity. In contrast, setting $\lambda \leq \frac{m}{4M+m}$, we achieve a larger constant for strong convexity, greater than $\gamma$, but this significantly reduces the constant for quasar-convexity.  By selecting the constants as $(\frac{\kappa}{2}, \gamma),$ the quasar-convexity constant becomes larger than the previous choice. However, the strong quasar-convexity parameter cannot be arbitrarily chosen or adjusted in this case.
\end{remark}

\medskip

\noindent In the next example, we check that quadratic fractional functions satisfies the assumptions of our algorithm.

\medskip 

\begin{example}
 Let $A, B \in \RR^{p\times p}$, $a, b \in 
  \RR^{p}$, $\alpha, \beta \in \RR$, $f\colon 
  \RR^{p} \rightarrow \RR$, $0 < m < M$ and $K$ defined as in Proposition \ref{prop:param}. Then the gradient is given by:
$$\nabla f(x) = \frac{\fh(x)(Ax + a) - \ft(x)(Bx+b)}{(\fh(x))^2}.$$ 
Additionally, if $K$ is bounded by $R$, then $f$ is $L$-smooth, where
\begin{align*} 
 L =&\frac{\|A\|^2}{m} + 2 \frac{(\|B\|R+\|b\| )(\|A\| R+ \|a\|)}{m^2} + 
 \frac{\left( \|A\| R^2 + 2\|a\| R + 2\alpha \right) \left(\|B\|R+\|b\|\right)^2}{m^3}
 + \frac{\left( \frac{1}{2}\|A\| R^2 + \|a\| R + \alpha\right) \|B\|^{2}}{m^2}.\end{align*}
Indeed, the Hessian $f(x) $ is given by:
\begin{align*}
 \nabla^2 f(x) =& \frac{A^{\top}A}{\fh(x)} - \frac{\left(Bx+b\right)(Ax 
 + a)^\top}{(\fh(x))^2} - \frac{\left(Ax + a\right) \left(Bx+b\right)^\top}{(\fh(x))^2} 
 - \ft(x) \frac{B^{\top}B}{(\fh(x))^2}+ 2\ft(x) \frac{\left(Bx+b\right) \left(Bx +
 b\right)^\top}{(\fh(x))^3}. 
\end{align*}
We now proceed to bound the Hessian, assuming that $K$ is bounded. Since $\fh(x) \geq m$, we can bound the terms as follows:
\begin{align*}\|\nabla^2 f(x)\| \leq &\ 2 \frac{\left(\|B\|\|x\|+\|b\|\right)\left(\|A\| \|x\| + \|a\|\right)}{m^2} + 2\frac{\left( \frac{1}{2} \|A\| \|x\|^2 + \|a\| \|x\| + \alpha \right)\left(\|B\|\|x\|+\|b\|\right)^2}{m^3}\nonumber\\&\frac{\|A\|^2}{m} ++\frac{\left( \frac{1}{2} \|A\| \|x\|^2 + \|a\| \|x\| + \alpha \right)\|B\|^2}{m^2}\leq L.\end{align*}
Thus, $f$ is $L$-smooth. 
\end{example}

\subsection{Other Non-Convex Examples}

The following two non-convex examples of quasar-convex functions are already known, 
we will check that they satisfies the assumptions of our algorithm.

\medskip

\begin{example}\label{Example 5} We set $\XX = \{x \in \RR^{p} \mid \|x\| \leq R\}$ and set the function $f \colon \XX \rightarrow \RR$. The function is defined as: \begin{align}\label{ex:function1} f(x) = \ft(\|x\|) \fh \left( \frac{x}{\|x\|} \right), \end{align} where $\ft(t) = \frac{t^{2}}{1+t^{2}}$ and $\fh(x) = \sum\limits_{i=1}^{p} a_{i} \sin^{2}(b_{i} x_{i})$ with $\{a_{i}\}_{i=1}^{p} \subseteq [0, m_{1}]^{p}$ and $\{b_{i}\}_{i=1}^{p} \subseteq [-m_{2}, m_{2}]^{p}$. \cris{Since $x^{*} = 0$ and $f(x^{*})=0$, quasar-convexity by definition is equivalent to finding $\kappa$ such that, for every $x \in [-R,R]$,}
\cris{\[
\kappa \leq \frac{\scal{x}{\nabla f(x)}}{f(x)} = \frac{2}{x^{2}+1}.
\]}

\noindent \cris{This implies that $f_{t}$ is $\tfrac{2}{R^{2}+1}$-quasar-convex, and consequently $f$ is also $\tfrac{2}{R^{2}+1}$-quasar-convex on $\XX$ with respect to $x^{*}=0$ (see Proposition~\ref{prop1}), with $V_{T}=0$.} The gradient of $f$ is given by:
$$\nabla f(x) = \frac{x}{\|x\|} \ft'(\|x\|) \fh\left(\frac{x}{\|x\|}\right) + \frac{1}{\|x\|} \left(\Id - \frac{xx^\top}{\|x\|^{2}}\right) \ft(\|x\|) \nabla \fh\left(\frac{x}{\|x\|}\right)$$
where, for every $u \in \RR^{p}$ and $\alpha>0$, $\nabla \fh(u)$ and $\ft'(\alpha)$  are given by: \begin{align*} \nabla \fh(u) &= \left( a_{i} b_{i} \sin(2b_{i} u_{i}) \right)_{i=1}^{p} \text{ and }\ft'(\alpha) = \frac{2\alpha}{(\alpha^{2}+1)^{2}}.\end{align*}

\noindent Note that, by defining  $\nabla f(0)=0$ the gradient is continuous on $\XX$. Moreover, the function $f$ is $(m_{1}p+m_{1}m_{2}p^{\frac{1}{2}})$-Lipschitz continuous. Indeed,
 \begin{align*} \|\nabla f(x)\| &\leq \left\|\frac{x}{\|x\|}\right\| \left\|\ft'(\|x\|)\right\| \left\|\fh\left(\frac{x}{\|x\|}\right)\right\| + \left\|\frac{\ft(\|x\|)}{\|x\|}\right\| \left\|\Id - \frac{xx^\top}{\|x\|^{2}}\right\| \left\|\nabla \fh\left(\frac{x}{\|x\|}\right)\right\| \nonumber \\ &\leq \left\| \fh\left(\frac{x}{\|x\|}\right)\right\| + \left\|\nabla \fh\left(\frac{x}{\|x\|}\right)\right\|\leq m_1 p +m_1 m_2 p^{\frac{1}{2}}. \end{align*}
\end{example}
\medskip

\begin{example} 
In this example, we focus on GLM using the logistic function, which is an example of 
a quasar-convex and weakly-smooth function \cite{wang2023continuized}, and provides an instance 
where we can control the path variation $V_{T}$.  We set the feasible set as 
$\XX=\{x \in \RR^{p} \mid \|x\| \leq R\} $. The function $ f \colon \XX \to \RR $ 
is defined as follows:
\begin{align} 
f(x) = \frac{1}{2m} \sum\limits_{i=1}^{m} \left( \sigma(\scal{a_{i}}{x}) - 
b_{i} \right)^{2}, \label{ex:f2}
\end{align}
where, for every $i\in [m]$, $a^{i}\in\RR^{p}$ and $b^{i} = \sigma(
\scal{a_{t}^{i}}{x^{*}})$, with $\sigma$ denoting the logistic function defined 
as follows $\sigma(\alpha)=\frac{e^{\alpha}}{1+e^{\alpha}}$. Note that 
$\sigma'(\alpha)=\frac{e^{\alpha}}{(1+e^{\alpha})^{2}}$ and $\sigma''(\alpha)= \frac{e^{\alpha}(1-e^{\alpha})}{(1+e^{\alpha})^{3}}$, which implies that 
$\sigma'$ is bounded by $\frac{1}{4}$. The gradient of $f$ is given by:
\begin{align*} 
\nabla f(x) = & \frac{1}{m} \sum\limits_{i=1}^{m} \sigma' (\scal{a_{i}}{x})\left( \sigma(\scal{a_{i}}{x})-b_{i} \right)a_{i}^{\top},
\end{align*}
Hence $f$ is $\Gamma$-weakly smooth with $\Gamma = \frac{\max\limits_{i\in[m]} 
\|a_{i}\|^{2}}{8}$. Indeed, since $4\sigma^{\prime} \leq 1,$ and $f(x^{*})=0$, 
we have, for every $x\in \XX$, that
\begin{align*}
\|\nabla f(x)\|^{2} \leq & \frac{1}{m} \sum\limits_{i=1}^{m} \left\|\sigma' (\scal{a_{i}}{x}) \left( \sigma (\scal{a_{i}}{x}) - b_{i} \right) a_{i}^{\top} \right\|^{2} 
\leq \frac{1}{m} \sum\limits_{i=1}^{m} \sigma'(\scal{a_{i}}{x})^{2} \left( \sigma(\scal{a_{i}}{x}) - b_{i} \right))^{2} \left\|a_{i}^{\top} \right\|^{2} 
\nonumber\\ 
\leq &  \frac{\max\limits_{i\in[m]}\|a_{i}\|^{2}}{16m} \sum\limits_{i=1}^{m}\left( \sigma(\scal{a_{i}}{x})-b_{i} \right))^{2} 
= \frac{\max\limits_{i\in[m]}\|a_{i}\|^{2}}{8} \left(f(x) -f(x^{*}) \right),
\end{align*}
Since $f(x^{*})=0$ and the positiveness of $f$, for $\kappa=\min(1,8\sigma'(R)),$ we have that 
\begin{align*}
 f(x^{*}) \geq & \left(1 - \frac{8 \sigma'(R)}{\kappa} \right) f(x) \geq - \frac{2
 \sigma'(R)}{\kappa} \frac{1}{2m} \sum\limits_{i=1}^{m} \left( \sigma(\scal{a_{i}}{x}) 
 - \sigma (\scal{a_{i}}{x^{*}}) \right) \left(\scal{a_{i}}{x} - \scal{a_{i}}{x^{*}} 
 \right) + f(x) \nonumber\\ 
 \geq & -\frac{1}{\kappa}\frac{1}{2m} \sum\limits_{i=1}^{m} 2\sigma'(\scal{a_{i}}{x})) 
 \left( \sigma(\scal{a_{i}}{x})- \sigma(\scal{a_{i}}{x^{*}}) \right)\left( 
 \scal{a_{i}}{x}- \scal{a_{i}}{x^{*}} \right) +f(x)\nonumber\\ 
 \geq & \frac{1}{\kappa}\scal{\nabla f(x)}{x^{*}-x} +f(x),
\end{align*}
which implies that $f$  is $\kappa$-quasar-convex. 
\end{example}

\section{Numerical Experiments}\label{sec:ne}

 In this section, we present numerical experiments to evaluate the performance of Delayed Online Gradient Descent on quasar-convex functions. All numerical examples are implemented in Python on a laptop equipped with an AMD Ryzen 5 3550 Hz processor, Radeon Vega Mobile Gfx, and 32 GB of RAM. The corresponding code can be downloaded from this \href{https://github.com/cristianvega1995/Delayed-Feedback-in-Online-Quasar-Convex-Optimization-A-Non-Stationary-Approach}{repository}. \medskip

\noindent We conduct experiments in three different settings: highly non-convex functions, GLM, and quadratic fractional functions. In all experiments, unless otherwise specified, we set $T = 20,000$ and run the algorithm with $d \in \{1, 5, 10, 20\}$. At each iteration, we set $r_{t} = \nabla f_{t}(x_{t})$, and the delay $d_t$ is drawn from a discrete uniform distribution over $[d]$. Each experiment is repeated $20$ times with random realizations of the sequence $\{f_{t}\}_{t \in [T]}$, and we report both the average performance and its standard deviation. \medskip

\noindent For the first experiment, we consider the Lipschitz continuous function presented in \eqref{ex:function1}. We set $R = 100$ and $p = 100$. \cris{For every $t \in [T]$, we define $f_{t}$ using parameters $\{a^{t}_{i}\}_{i=1}^{p}$ and $\{b^{t}_{i}\}_{i=1}^{p}$, which are drawn independently and uniformly from $[0,1]^{p}$ and $[-2.5,2.5]^{p}$, respectively.} Since $x^{*}_{t} = 0$, we have $f_{t}(x^{*}_{t}) = 0$ for every $t \in [T]$, and thus $V_{T} = 0$. The initialization is given by $x_{0} \sim U(0.2,0.4)^{p}$. The step-size is chosen as 
\[
\eta = \frac{2R}{L}\,(T(5d-4))^{-\tfrac{1}{2}},
\]
which, according to Remark \ref{R:4.4} \ref{R:4.4(ii)}, is the optimal step-size for minimizing the upper bound of the regret.

 \begin{figure}[htbp]
    \centering
    \includegraphics[scale=0.5]{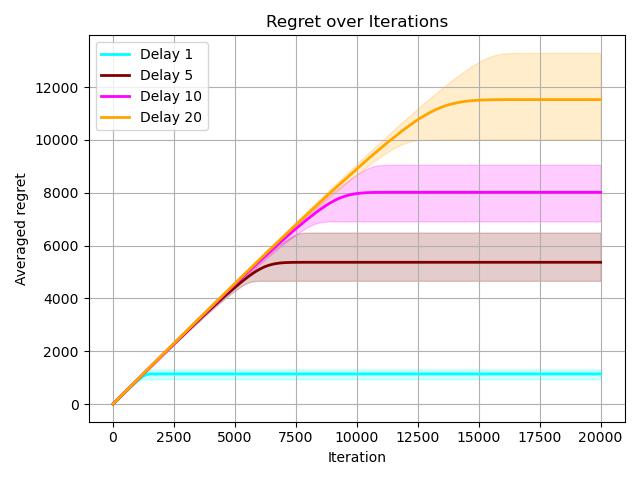}
    \caption{Average regret of Algorithm \ref{DOGD-SC} on the Lipschitz continuous function defined in \eqref{ex:function1} withh $d \in \{1, 5, 10, 20\}$.}
    \label{fig:noteo}
\end{figure}
\begin{table}[htbp]
\centering
\begin{tabular}{|l|l|l|}
\hline
\textbf{Algorithm} & \textbf{Iter} & \textbf{std}  \\ \hline
$d= \,~1$ & ~~1,476 & \, 67.18 \\ \hline
$d= \,~5$ & ~~6,707 & 315.02 \\ \hline
$d=10$ & 10,015 & 540.10  \\ \hline
$d=20$ & 14,469 & 749.89  \\ \hline
\end{tabular}
\caption{First iteration when \cris{the value of $f_{t}(x_{t})-f_{t}(x^{*}_{*})$} is less than $0.1$ (iter), along with the standard deviation of the final iterate (std), as shown in Figure \ref{fig:noteo}.}

\label{tab:Alg1}
\end{table}

\begin{figure}[htbp]
\centering
\includegraphics[scale=0.5]{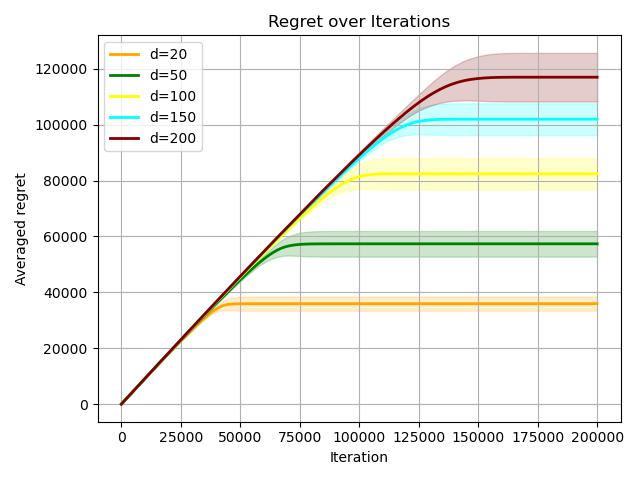}
\caption{Average regret of Algorithm \ref{DOGD-SC} on the Lipschitz continuous function defined in \eqref{ex:function1} in a high-delay setting with $d \in \{20, 50, 100, 150, 200\}.$}
\label{fig:noteo1}
\end{figure}
\begin{table}
\centering
\begin{tabular}{|l|l|l|}
\hline
\textbf{Algorithm} & \textbf{Iter} & \textbf{std}  \\ \hline
$d= \,~20$ & ~~45,694& 2,375 \\ \hline
$d= \,~50$ & ~~71,466 & 3,746 \\ \hline
$d=100$ & 103,445 & 6,024  \\ \hline
$d=150$ & 127,445 & 5,842  \\ \hline
$d=200$ & 142,963 & 5,784  \\ \hline
\end{tabular}

\caption{First iteration when \cris{the value of $f_{t}(x_{t})-f_{t}(x^{*}_{*})$} is less than $0.1$ (iter), along with the standard deviation of the final iterate (std), as shown in Figure \ref{fig:noteo1}.}
\label{tab:Alg1.5}
\end{table}

\noindent Figure \ref{fig:noteo} illustrates the performance of Delayed Online Gradient Descent on highly non-convex functions. As observed, the regret curves grow sub-linearly with the number of iterations and flatten after a certain point. Additionally, the regret increases with the delay, meaning that larger delays result in higher regret and require more iterations to flatten the curve, as shown in Table \ref{tab:Alg1}. These results support our theoretical prediction that larger delays lead to greater regret. A similar trend is observed for the standard deviation, which also increases as the delay grows. \medskip

\noindent We also conduct experiments in a high-delay setting, where $d \in \{20, 50, 100, 150, 200\}$ and $T = 200,000$, while keeping the other parameters the same as in the previous experiment. As seen in Figure \ref{fig:noteo1} and Table \ref{tab:Alg1.5}, the regret curves grow sub-linearly with the number of iterations and then stabilize. The delay causes the regret to increase, and the number of iterations required for stabilization also grows, supporting our earlier findings. However, in this case, the standard deviation does not necessarily increase with the delay. Note that although the same class of functions and parameters is used, this experiment is not directly comparable to the previous one since the step-size decreases as $T$ increases. For instance, when $d = 20$, the regret curve stabilizes at iteration $45{,}203$, compared to $14{,}469$ in the previous experiment, due to the smaller step-size (reduced by a factor of $\sqrt{10}$), highlighting the importance of the step-size choice. \medskip

\noindent For the second experiment, we consider the function presented in \eqref{ex:f2}, which is weakly smooth. We set $R=1$ and $p=100$. The sequence of optimal solutions is initialized as $x_1^{*} \sim \mathcal{N}(0, \Id)$ and evolves, for every $t\in[T-1]$, according to the update rule:
\begin{align*}
x_{t+1}^{*} = x_{t}^{*} + \frac{0.1}{t^{\frac{1}{2}}} v_{t},
\end{align*}
where $v_1, \dots, v_{T-1} \sim \mathcal{N}(0, \Id)$ are independent and identically distributed standard Gaussian vectors. This implies that $f_{t}(x_{t}^{*}) = 0$ for every $t \in [T]$, and that $V_T = \mathcal{O}(T^{\frac{1}{2}})$. The initial iterate is sampled as $x_{0} \sim \mathcal{N}(0, \Id)$ and then normalized. For every $t\in [T]$, we collect $m = 1000$ independent and identically distributed samples $\{(a_{t}^{i}, b_{t}^{i})\}_{i=1}^{m}$, where the inputs are $a_{t}^{i} \sim \mathcal{N}(0, \Id)$, and the outputs are defined as $b_{t}^{i} = \sigma(\scal{a_{t}^{i}}{x^{*}_{t}})$. The step-size is set as
\begin{align}\label{step-size1}
\eta_{t} = 0.99 \frac{2\kappa}{(d + 2d^{\frac{1}{2}}(d-1))\Gamma}.
\end{align}

\noindent Figure \ref{fig:noteo2} illustrates the performance of Delayed Online Gradient Descent on GLM. As observed in the previous experiment, both the regret and the standard deviation increase with the delay, and all regret curves grow sub-linearly. In this example, when there is no delay, the cumulative regret is significantly lower than in the delayed cases and stabilizes much faster. This highlights the impact of delay on cumulative regret, as larger delays lead to both higher regret and slower stabilization.\\

 \begin{figure}[htbp]
\centering
\includegraphics[scale=0.5]{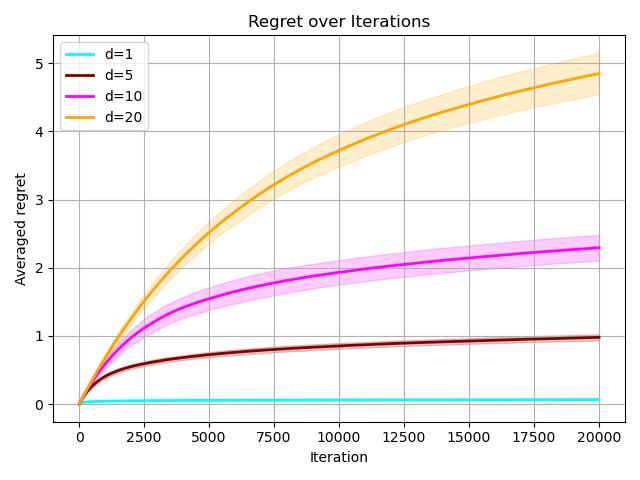}
\caption{Average regret of Algorithm \ref{DOGD-SC} on the weakly smooth function defined in \eqref{ex:f2}.}
\label{fig:noteo2}
\end{figure}
\begin{table}
\centering
\begin{tabular}{|l|l|l|}
\hline
\textbf{Algorithm} & \textbf{Iter} & \textbf{std}  \\ \hline
$d=~\,1$ & ~~~~~~78 & 0.029 \\ \hline
$d=~\,5$ & ~~1,862 & 0.053 \\ \hline
$d=10$ & ~~5,328 & 0.127 \\ \hline
$d=20$ & 15,728 & 0.366  \\ \hline
\end{tabular}
\caption{First iteration when \cris{the value of $f_{t}(x_{t})-f_{t}(x^{*}_{*})$} is less than $10^{-4}$ (iter), along with the standard deviation of the final iterate (std), as shown in Figure \ref{fig:noteo2}.}
\label{tab:Alg2}
\end{table}

\noindent To explore the impact of path variation $V_T$, we keep $d$, $\Gamma$, $\kappa$, $\theta$, and $R$ constant while varying $V_T$. Consequently, the upper bound on the regret depends only on $V_T$. To achieve this, we modify the recursion for $x_t^{*}$ by introducing a parameter $\text{a}$ as follows:  
$$x_{t+1}^{*} = x_{t}^{*} + \frac{0.1}{t^{\text{a}}} v_{t},$$  
where $\text{a} \in \{0.0625, 0.125, 0.25, 0.5, 1\}$, and $\{v_t\}_{t \in [T]}$ is sampled as in the previous experiment. This implies that $V_T = \mathcal{O}(T^{1-\text{a}})$. The initial iterate $x_0$ is sampled as in the previous experiment, and for each $t \in [T]$, $\eta_t$, $b_t$, and $A_t$ are sampled as in the previous experiment. Even though we are comparing different functions, by normalizing $A_t$ and $b_t$, we ensure that all parameters except $V_T$ remain the same, allowing us to study the effect of $V_T$ on the upper bound of the regret.

\begin{figure}[htbp]
\centering
\includegraphics[scale=0.5]{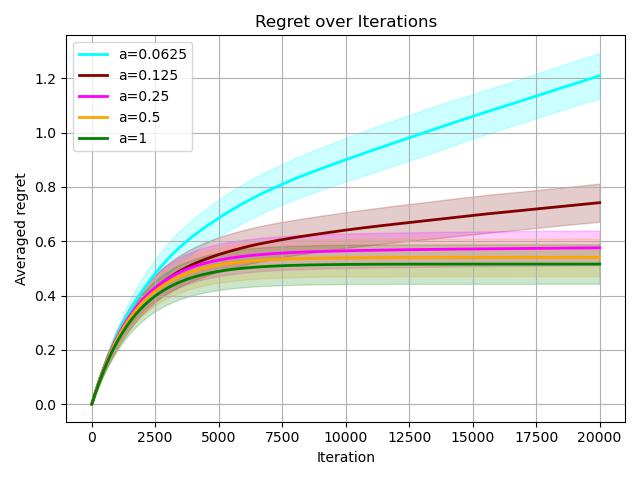}
\caption{Average regret of Algorithm \ref{DOGD-SC} on the weakly smooth function defined in \eqref{ex:f2} for differents $V_{T}$.}
\label{fig:noteo3}
\end{figure}
\begin{table}
\centering
\begin{tabular}{|l|l|l|}
\hline
\textbf{Algorithm} & \textbf{Iter} & \textbf{std}  \\ \hline
$\text{a} = 0.0625$ & 2,939& 0.084 \\ \hline
$\text{a} = 0.125$ & 2,135 & 0.070 \\ \hline
$\text{a} = 0.25$ & 2,021 & 0.064  \\ \hline
$\text{a} = 0.5$ & 1,945 & 0.068  \\ \hline
$\text{a} = 1$ & 1,867 & 0.073  \\ \hline
\end{tabular}
\caption{First iteration when \cris{the value of $f_{t}(x_{t})-f_{t}(x^{*}_{*})$} is less than $10^{-4}$ (iter), along with the standard deviation of the final iterate (std), as shown in Figure \ref{fig:noteo3}.}
\label{tab:Alg3}
\end{table}

\noindent As seen in Figure \ref{fig:noteo3}, the regret decreases as the value of $\text{a}$ increases. All regret curves, except for $\text{a} = 0.0625$, exhibit sub-linear growth, indicating that they may require more iterations to flatten rather than suggesting linear growth. In Table \ref{tab:Alg3}, we observe that, as $\text{a}$ increases, \cris{the gap $f_{t}(x_{t})-f_{t}(x^{*}_{*})$} decreases more quickly. However, the standard deviation remains consistent across different values of $\text{a}$. These results reinforce our findings on the impact of cumulative path variation on regret.\\

\noindent For the third experiment, we consider the function defined in \eqref{ex:f3}. We set $B_{t} \equiv 0$, $R = 10$, and $p = 50$. The parameters of the function $f_{t}$ are initialized as follows: $A_{0}$ is a positive definite matrix with norm $1$, while $a_{0}$ and $b_{0}$ are independently sampled from a standard normal distribution and then normalized to have norm $0.1$.

\noindent For each $t \in [T]$, the parameters are updated according to the following recursions:
\begin{align*}
    A_{t} &= \frac{A_{t-1} + 0.01\, t^{-\tfrac{1}{2}} v_{t}^{1}}{\|A_{t-1} + 0.01\, t^{-\tfrac{1}{2}} v_{t}^{1} \|}, \hspace{3mm}
    a_{t} = \frac{a_{t-1} + 0.01\, t^{-\tfrac{1}{2}} v_{t}^{2}}{10 \|a_{t-1} + 0.01\, t^{-\tfrac{1}{2}} v_{t}^{2}\|}, \hspace{3mm} \text{and} \hspace{3mm}
    b_{t} = \frac{b_{t-1} + 0.01\, t^{-\tfrac{1}{2}} v_{t}^{3}}{10 \|b_{t-1} + 0.01\, t^{-\tfrac{1}{2}} v_{t}^{3}\|},
\end{align*}
where $v_{t}^{1}$ is a positive definite matrix, and $v_{t}^{2}$ and $v_{t}^{3}$ are independent and identically distributed standard Gaussian vectors. We also set $\beta_{t} \equiv \|b_{t}\|R + 100$ and $\alpha_{t} = 10$, which ensures that, for all $x \in \XX$, $x \in K$ with $m = 100$ and $M = \|b_{t}\|R + \beta$. We set the step-size $\eta$ as in \eqref{step-size1}. Since the sequence of optimal solutions is not available a priori (and consequently neither is $V_{T}$), we implement a gradient descent subroutine at each iteration $t \in [T]$, which, as guaranteed by Corollary \ref{cor1}, converges to the unique solution. The stopping criterion for this subroutine is based on the relative error, with a tolerance of $10^{-6}$. To improve execution time, we adopt a warm-start strategy, initializing the subroutine at iteration $t$ with the solution from iteration $t-1$.

\begin{figure}[htbp]
\centering
\includegraphics[scale=0.5]{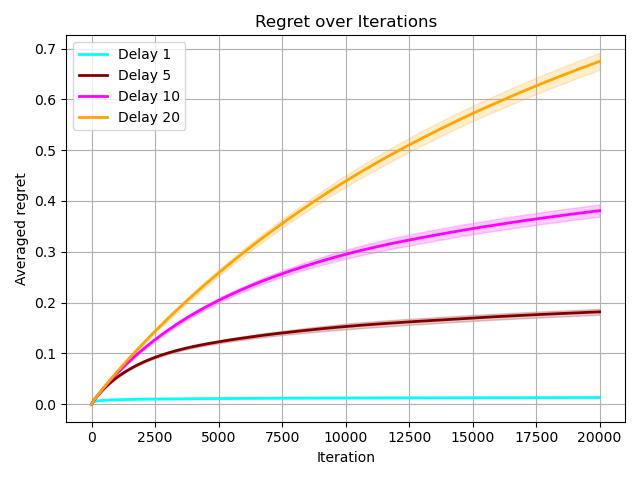}
\caption{Average regret of Algorithm \ref{DOGD-SC} on the weakly smooth function defined in \eqref{ex:f3} for differents $V_{T}$.}
\label{fig:noteo4}
\end{figure}
\begin{table}
\centering
\begin{tabular}{|l|l|l|}
\hline
\textbf{Algorithm} & \textbf{Iter} & \textbf{std}  \\ \hline
$d=~~1$ & ~~~~162& 3.1$\cdot 10^{-4}$ \\ \hline
$d=~~5$ & ~~4126 & 5.9$\cdot 10^{-3}$ \\ \hline
$d=10$ & 12069 & 1.2$\cdot 10^{-2}$ \\ \hline
$d=20$ & 19985 & 1.7$\cdot 10^{-2}$  \\ \hline
\end{tabular}
\caption{First iteration when \cris{the value of $f_{t}(x_{t})-f_{t}(x^{*}_{*})$} is less than $10^{-5}$ (iter), along with the standard deviation of the final iterate (std), as shown in Figure \ref{fig:noteo4}.}
\label{tab:Alg4}
\end{table}

\noindent Figure \ref{fig:noteo4} illustrates the performance of Delayed Online Gradient Descent on quadratic fractional functions. As observed in the previous experiments, the regret curves grow sub-linearly with the number of iterations and then stabilize, flattening after a certain number of iterations. As in the previous experiments the regret increases with the delay. Table \ref{tab:Alg4} shows that larger delays require more iterations for the curve to stabilize. A similar behavior is observed for the standard deviation. \medskip

\noindent To investigate the impact of error when the full gradient is unavailable, and consequently how $\Lambda_{T}$ and $\Delta_{T}$ affect the regret, we set the parameters as in the previous experiment, but fix $d=5$ since the curve stabilizes faster than for $d=10$ in the previous experiment, which allows a wider range for selecting the parameter $\text{a}$. In this case, we define $r_{k}$ as in \eqref{zeroth order} with $h_{t}=\tfrac{1}{t^{\text{a}}}$, where $\text{a} \in \{0.4, 0.6, 0.8, 1\}$. As a result, the upper bound on the regret depends only on $\Lambda_{T}$ and $\Delta_{T}$, allowing us to analyze their effects on the regret’s upper bound. We also compare the results with the case when the full gradient is available ($h_{t}\equiv 0$). This leads to $\Delta_{T} \geq \mathcal{O}(T^{1-\text{a}})$ for $\text{a} \in \{0.4, 0.6, 0.8\}$, and $\Delta_{T} \geq \mathcal{O}(\ln(T))$ when $\text{a}=1$.

\begin{figure}[htbp]
\centering
\includegraphics[scale=0.35]{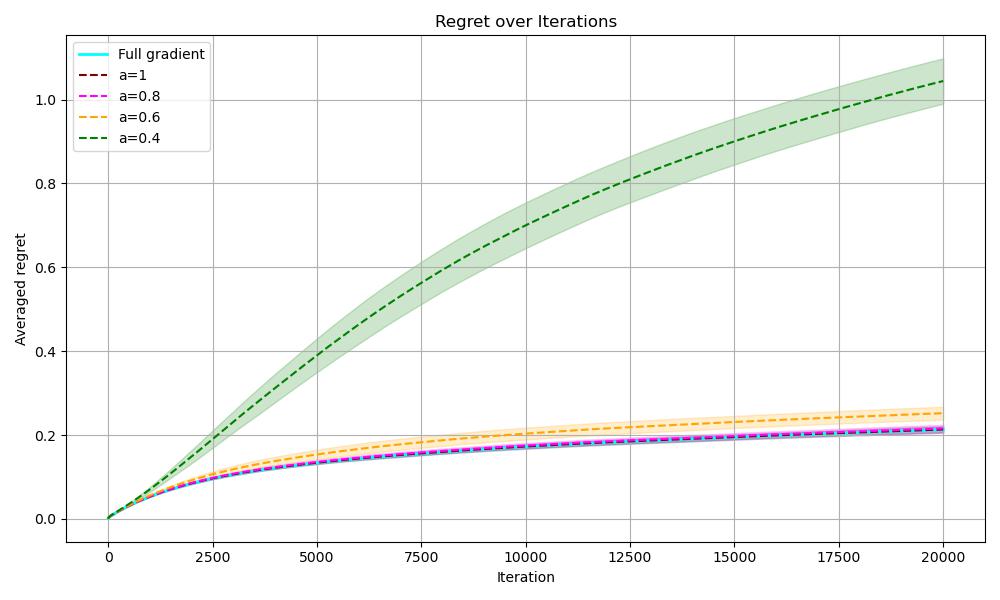}
\caption{Average regret of Algorithm \ref{DOGD-SC} with $d=5$ on the weakly smooth function defined in \eqref{ex:f3} for differents $\Delta_{T}$ and $\Lambda_{T}$.}
\label{fig:noteo6}
\end{figure}
\begin{table}
\centering
\begin{tabular}{|l|l|l|l|}
\hline
\textbf{Algorithm} & \textbf{Iter} & \textbf{std} & \textbf{Time} [s]\\ \hline
$\text{Full gradient}$ & 5394& $7.6\cdot 10^{-3}$ & ~~1.0  \\ \hline
$\text{a} = 1$ & 5,394 & $7.8\cdot 10^{-3}$ & 19.9\\ \hline
$\text{a} = 0.8$ &5,495 & $8.5\cdot 10^{-3}$&  19.7\\ \hline
$\text{a} = 0.6$ & 6,853 & $1.5\cdot 10^{-2}$& 19.5\\ \hline
$\text{a} = 0.4$ & - & $5.4\cdot 10^{-2}$& 19.6\\ \hline

\end{tabular}
\caption{First iteration where \cris{the value of $f_{t}(x_{t})-f_{t}(x^{*}_{*})$} falls below $10^{-5}$ (iter) and the standard deviation of the final iterate (std), as illustrated in Figure \ref{fig:noteo6}. The average execution time for each experiment is also included.}
\label{tab:Alg6}
\end{table}

\noindent As illustrated in Figure \ref{fig:noteo6}, the regret decreases as the value of $\text{a}$ increases. For $\text{a} = 1$ and $\text{a} = 0.8$, the regret curve closely follows the full gradient curve, indicating a good approximation. A similar behavior occurs when $\text{a} = 0.6$, though the gradient approximation is less accurate compared to the previous cases. When $\text{a} = 0.4$, the regret increases, and the gap $f_{t}(x_{t}) - f_{t}(x^{*}_{*})$ does not drop below $10^{-5}$ within 20,000 iterations. Table \ref{tab:Alg6} shows that as the value of $\text{a}$ increases, fewer iterations are required to reach the given tolerance level. A similar trend is observed for the standard deviation. Furthermore, the standard deviation for the full gradient and for $\text{a} = 1$ is comparable, though the gradient computation in the bandit setting incurs a significantly higher execution time. This raises the question of exploring cheaper gradient approximations, such as the two-point estimator proposed in \cite{agarwal2010optimal,wan2022online}.

\section{Conclusions and Future Work}\label{sec:cfw}

In this work, we introduce a projected gradient descent algorithm to address online quasar-convex optimization in a delayed feedback setting for both weakly smooth and Lipschitz gradient functions. We establish a sub-linear dynamic regret rate for the proposed algorithm, which depends on the cumulative path variation, the delay, the radius of the set, and the time horizon. Furthermore, we extend our algorithm to the (zeroth-order) bandit setting and establish the convergence of the delayed zeroth-order projected gradient descent method for strongly quasar-convex functions. We also provide new examples of quasar-convex functions that improve the applicability of online optimization to quasar-convex functions, such as strongly quasiconvex functions. Experimental results validated our theoretical findings, highlighting the influence of path variation and delay on algorithmic behavior. \medskip

\noindent Future research will extend this work to more general settings such as mirror descent, inertial schemes, second-order methods, and non-smooth functions. Another research direction is to investigate stochastic error in order to incorporate more complex zeroth-order schemes, particularly the two-point estimator in \cite{flaxman2004online,wan2022online}. We also plan to study how to weaken conditions to consider unbounded feasible sets and thus consider linear constraints.

\section{Declarations}







\subsection{Availability of supporting data}

No data sets were generated during the current study. The used {\sc python} codes are available from all authors on reasonable request.

\subsection{Author Contributions}

 All authors contributed equally to the study conception, design and implementation and wrote and corrected the manuscript.

\subsection{Competing Interests}

There are no conflicts of interest or competing interests related to this
manuscript.

\subsection{Funding}

 This research was partially supported by UTA research project Fortalecimientos Grupos de Investigaci\'on C\'odigo 8802-25 and by ANID---Chile through Fondecyt Regular 1241040 (Lara).

\subsection{Acknownledgements:}

{\color{black} The authors wishes to thank the reviewer for their comments and suggestions that helped to improve the paper.}

\printbibliography

@inproceedings{agarwal2006algorithms,
  title={Algorithms for portfolio management based on the newton method},
  author={Agarwal, Amit and Hazan, Elad and Kale, Satyen and Schapire, Robert E},
      booktitle={Proc. 23rd Int. Conf. Mach. Learn. (ICML’06)},
  pages={9--16},
  year={2006},
volume = {35},
doi = {https://doi.org/10.1145/1143844.1143846}
}

@article{chen2024online,
  author  = {X. Chen and Y. Liu and G. Hong},
  title   = {An online learning approach to dynamic pricing and capacity sizing in
 service systems},
  journal = {Oper. Res.},
  volume  = {35},
  pages   = {9--16},
  year    = {2023},
  doi = {https://doi.org/10.1287/opre.2020.0612}
}

@inproceedings{agarwal2010optimal,
	title = {Optimal Algorithms for Online Convex Optimization with Multi-Point Bandit Feedback.},
	author = {Agarwal, Alekh and Dekel, Ofer and Xiao, Lin},
	year = {2010},
	booktitle = {Proc. 23rd Conf. Learn. Theory (COLT’10)},
	pages = {28--40},
	organization = {Citeseer},
}

@inproceedings{blum1997universal,
  title={Universal portfolios with and without transaction costs},
  author={Blum, Avrim and Kalai, Adam},
  booktitle={Proc. 10th Ann. Conf. Comput. Learn. Theory (COLT’97)},
  pages={309--313},
  year={1997},
    doi= {https://doi.org/10.1145/267460.267518},
}

@inbook{blum2005line,
	title = {On-line algorithms in machine learning},
	author = {Blum, Avrim},
	year = {2005},
	journal = {Online algorithms: the state of the art},
	publisher = {Springer},
	pages = {306--325},
doi={https://doi.org/10.1007/BFb0029575}
}

@article{besbes2015non,
	title = {Non-stationary Stochastic Optimization},
	author = {Omar Besbes and Yonatan Gur and Assaf Zeevi},
	year = {2015},
	journal = {Oper. Res.},
	volume = {63},
	number = {5},
	pages = {1227--1244},
        doi= {https://doi.org/10.1287/opre.2015.1408},
        publisher={INFORMS}
}

@article{flaxman2004online,
	title = {Online convex optimization in the bandit setting: gradient descent without a gradient},
	author = {Flaxman, Abraham D and Kalai, Adam Tauman and McMahan, H Brendan},
	year = {2004},
	journal = {arXiv preprint cs/0408007}
}

@article{grad2025strongly,
	title = {Strongly quasiconvex functions: what we know (so far)},
	author = {Grad, Sorin-Mihai and Lara, Felipe and Marcavillaca, Ra{\'u}l T},
	year = {2025},
	journal = {J. Optim. Theory Appl.},
	publisher = {Springer},
	address = {New York},
	fjournal = {Journal of Optimization Theory and Applications},
	volume = {205},
	number = {2},
	pages = {38},
        doi = {. https://doi.org/10.1007/s10957-025-02641-4}
}

@article{guminov2017accelerated,
	title = {Accelerated Methods for $\alpha $-Weakly-Quasi-Convex Problems},
	author = {Guminov, Sergey and Gasnikov, Alexander and Kuruzov, Ilya},
	year = {2017},
	journal = {arXiv preprint arXiv:1710.00797}
}

@article{hardt2016gradient,
	title = {Gradient descent learns linear dynamical systems},
	author = {Hardt, Moritz and Ma, Tengyu and Recht, Benjamin},
  journal={J. Mach. Learn. Res.},
  volume={19},
  number={29},
  pages={1--44},
  year={2018}
}

@article{hazan2016introduction,
	title = {Introduction to online convex optimization},
	author = {Hazan, Elad and others},
	year = {2016},
	journal = {Found. Trends Opt.},
	publisher = {Now Publishers, Inc.},
	volume = {2},
	number = {3-4},
	pages = {157--325},
        doi = {http://www.doi.org/10.1561/2400000013 }
}

@article{hazan2007logarithmic,
	title = {Logarithmic regret algorithms for online convex optimization},
	author = {Hazan, Elad and Agarwal, Amit and Kale, Satyen},
	year = {2007},
	journal = {Mach. Learn.},
	publisher = {Springer},
	volume = {69},
	number = {2},
	pages = {169--192},
        doi = {https://doi.org/10.1007/s10994-007-5016-8}
}

@inproceedings{helmbold1995predicting,
	title = {Predicting nearly as well as the best pruning of a decision tree},
	author = {Helmbold, David P and Schapire, Robert E},
	year = {1995},
	booktitle = {Proc. 8th Ann. Conf. Comput. Learn. Theory (COLT’95)},
	pages = {61--68}
}

@inproceedings{hinder2020near,
	title = {Near-Optimal Methods for Minimizing Star-Convex Functions and Beyond},
	author = {Oliver Hinder and Aaron Sidford and Nimit Sohoni},
	year = {2020},
	booktitle = {Proc. 33rd Conf. Learn. Theory (COLT’20)},
	publisher = {PMLR},
	pages = {1894--1938}
}

@article{iusem2024two,
  title={A two-step proximal point algorithm for nonconvex equilibrium problems with applications to fractional programming},
 author = {A. Iusem and F. Lara and R.T. Marcavillaca and L.H. Yen},
  journal = {J. Glob. Optim.},
  volume={90},
  number={3},
  pages={755--779},
  year={2024},
  publisher={Springer},
  doi = {https://doi.org/10.1007/s10898-024-01419-8}
}

@inproceedings{joulani2013online,
	title = {Online learning under delayed feedback},
	author = {Joulani, Pooria and Gyorgy, Andras and Szepesv{\'a}ri, Csaba},
	year = {2013},
	booktitle = {Proc. 30th Int. Conf. Mach. Learn. (ICML’13)},
	pages = {1453--1461},
	organization = {PMLR},
  volume = 	 {28},
  number =       {3}
}

@article{kalai2005efficient,
	title = {Efficient algorithms for online decision problems},
	author = {Kalai, Adam and Vempala, Santosh},
	year = {2005},
	journal = {	J. Comput. Syst. Sci.},
	publisher = {Elsevier},
	volume = {71},
	number = {3},
	pages = {291--307},
        doi = {https://doi.org/10.1016/j.jcss.2004.10.016}
}

@article{lara2022strongly,
  title={On strongly quasiconvex functions: existence results and proximal point algorithms},
  author={Lara, Felipe},
  journal={Journal of Optimization Theory and Applications},
  volume={192},
  number={3},
  pages={891--911},
  year={2022},
  publisher={Springer},
  doi = {https://doi.org/10.1007/s10957-021-01996-8}
}

@article{lara2024characterizations,
  title={Characterizations, dynamical systems and gradient methods for strongly quasiconvex functions},
  author={Lara, Felipe and Marcavillaca, Ra{\~A}{\=e}l T and Vuong, Phan T},
  journal={Journal of Optimization Theory and Applications},
  volume={206},
  number={3},
  pages={article number 60},
  year={2025},
  publisher={Springer},
}

@inproceedings{li2019bandit,
	title = {Bandit online learning with unknown delays},
	author = {Li, Bingcong and Chen, Tianyi and Giannakis, Georgios B},
	year = {2019},
	booktitle = {Proc. 22nd Int. Conf. Artif. Intell. Stat. (AISTATS’19)},
        volume = 	 {89},
	pages = {993--1002},
	organization = {PMLR},
}

@article{mokhtari2016online,
	title = {Online Optimization in Dynamic Environments: Improved Regret Rates for Strongly Convex Problems},
	author = {Aryan Mokhtari and Shahin Shahrampour and Ali Jadbabaie and Alejandro Ribeiro},
	year = {2016},
	journal = {IEEE Conf. Decis. Control (CDC)},
	pages = {7195--7201},
        doi = {https://doi.org/10.1109/CDC.2016.7799379}
}

@article{nesterov2006cubic,
	title = {Cubic regularization of newton method and its global performance},
	author = {Y. Nesterov and B.T. Polyak},
	year = 2006,
	journal = {Math. Progam.},
	volume = 108,
	number = {1},
	pages = {177--205},
        doi = {https://doi.org/10.1007/s10107-006-0706-8}
}

@book{polyak1987introduction,
	title = {Introduction to Optimization},
	author = {Boris T. Polyak},
	year = {1987},
	publisher = {Optimization Software, Inc.},
	address = {New York},
	series = {Translations Series in Mathematics and Engineering}
}

@article{pun2024online,
	title = {Online Non-Stationary Stochastic Quasar-Convex Optimization},
	author = {Pun, Yuen-Man and Shames, Iman},
	year = {2024},
	journal = {arXiv preprint arXiv:2407.03601}
}

@article{quanrud2015online,
	title = {Online learning with adversarial delays},
	author = {Quanrud, Kent and Khashabi, Daniel},
	year = {2015},
	journal = {Adv. Neural Inf. Process. Syst., vol. 28, NIPS 2015},
	volume = {28}
}

@inproceedings{saha2011improved,
	title = {Improved regret guarantees for online smooth convex optimization with bandit feedback},
	author = {Saha, Ankan and Tewari, Ambuj},
	year = {2011},
	booktitle = { Proc. 14th Int. Conf. Artif. Intell. Stat. (AISTATS’11)},
	pages = {636--642},
        publisher =    {PMLR},
  volume = 	 {15}
}

@article{shalev2012online,
	title = {Online learning and online convex optimization},
	author = {Shalev-Shwartz, Shai and others},
	year = {2012},
	journal = {Found. Trends Mach. Learn.},
	publisher = {Now Publishers, Inc.},
	volume = {4},
	number = {2},
	pages = {107--194},
        doi = {http://dx.doi.org/10.1561/2200000018}
}

@article{wan2022online,
	title = {Online strongly convex optimization with unknown delays},
	author = {Wan, Yuanyu and Tu, Wei-Wei and Zhang, Lijun},
	year = {2022},
	journal = {Mach. Learn.},
	publisher = {Springer},
	volume = {111},
	number = {3},
	pages = {871--893},
        doi = { https://doi.org/10.1007/s10994-021-06072-w}
}

@article{wan2023non,
	title = {Non-stationary online convex optimization with arbitrary delays},
	author = {Wan, Yuanyu and Yao, Chang and Song, Mingli and Zhang, Lijun},
	year = {2023},
	journal = {arXiv preprint arXiv:2305.12131}
}

@article{wang2023continuized,
  title={Continuized acceleration for quasar convex functions in non-convex optimization},
  author={Wang, Jun-Kun and Wibisono, Andre},
  journal={arXiv preprint arXiv:2302.07851},
  year={2023}
}

@inproceedings{wang2021delay,
	title = {Delay-tolerant constrained OCO with application to network resource allocation},
	author = {Wang, Juncheng and Liang, Ben and Dong, Min and Boudreau, Gary and Abou-Zeid, Hatem},
	year = {2021},
	booktitle = {Proc. IEEE Conf. Comput. Commun. (INFOCOM’21)},
	pages = {1--10},
	organization = {IEEE}
}

@article{wang2022delay,
	title = {Delay-Tolerant OCO With Long-Term Constraints: Algorithm and Its Application to Network Resource Allocation},
	author = {Wang, Juncheng and Dong, Min and Liang, Ben and Boudreau, Gary and Abou-Zeid, Hatem},
	year = {2022},
	journal = {IEEE/ACM Trans. Netw.},
	publisher = {IEEE},
	volume = {31},
	number = {1},
	pages = {147--163}
}

@inproceedings{zinkevich2003online,
	title = {Online convex programming and generalized infinitesimal gradient ascent},
	author = {Zinkevich, Martin},
	year = {2003},
	booktitle = {Proc. 20th Int. Conf. Mach. Learn. (ICML’03)},
	pages = {928--936}
}

@article{necoara2019linear,
  title={Linear convergence of first order methods for non-strongly convex optimization},
  author={Necoara, Ion and Nesterov, Yu and Glineur, Francois},
  journal={Math. Program.},
  volume={175},
  pages={69--107},
  year={2019},
  publisher={Springer},
  doi = {https://doi.org/10.1007/s10107-018-1232-1}
}

@article{arrow1961quasi,
  title={Quasi-concave programming},
  author={Arrow, Kenneth J and Enthoven, Alain C},
  journal={Econometrica: J. Econom. Soc.},
  pages={779--800},
  year={1961},
  publisher = {JSTOR},
  doi = {https://doi.org/10.2307/1911819}
}

@article{CEG,
	title        = {On the long time behavior of second order differential equations with asymptotically small dissipation},
	author       = {A. Cabot and H. Engler and S. Gadat},
	year         = 2009,
	journal      = {Trans. Amer. Math. Soc.},
	volume       = 361,
	number       = 0,
	pages        = {5983--6017}
}

@book{CM-Book,
	title        = {Generalized Convexity and Optimization: Theory and Applications},
	author       = {A. Cambini and L. Martein},
	year         = 2009,
	volume       = 98,
	number       = 6,
	pages        = {925--936},
    publisher={Springer Berlin, Heidelberg},
    doi = {https://doi.org/10.1007/978-3-540-70876-6}
}

@book{HKS,
  author    = {N. Hadjisavvas and S. Komlosi and S. Schaible},
  title     = {Handbook of Generalized Convexity and Generalized Monotonicity},
  publisher = {Springer},
  volume={76},
  year={2006},
  publisher={Springer Science \& Business Media},
  doi = {https://doi.org/10.1007/b101428}
}

@article{nam2024strong,
  title={On strong quasiconvexity of functions in infinite dimensions},
  author={Nam, Nguyen Mau and Sharkansky, Jacob},
  journal={arXiv preprint arXiv:2409.17450},
  year={2024}
}

@article{P-1966,
    author = {B.T. Polyak},
    title = {Existence theorems and convergence of minimizing
 sequences in extremum problems with restrictions},
    journal = {Soviet Math.},
    volume = {7},
    pages = {72--75},
    year = {1966}
}

@article{VNC-2,
  author = {A.A. Vladimirov and Ju.E. Nesterov and Ju.N. Chekanov},
  title = {O ravnomerno kvazivypuklyh funkcionalah [On uniformly quasiconvex functionals]},
  journal = {Vestn. Mosk. un-ta, vycis. mat. i kibern.},
  volume = {4},
  pages = {18--27},
  year = {1978}
}

@inproceedings{gao2018online,
  title        = {Online Learning with Non-convex Losses and Non-stationary Regret},
  author       = {Gao, Xiand and Li, Xiaobo and Zhang, Shuzhong},
  booktitle    = {Proc. 21st Int. Conf. Artif. Intell. Stat. (AISTATS 2018)},
  volume       = {84},
  pages        = {235--243},
  year         = {2018},
  organization = {PMLR}
}

@inproceedings{he2014practical,
  title        = {Practical lessons from predicting clicks on ads at Facebook},
  author       = {He, Xinran and Pan, Junfeng and Jin, Ou and Xu, Tianbing and Liu, Bo and Xu, Tao and Shi, Yanxin and Atallah, Antoine and Herbrich, Ralf and Bowers, Stuart and others},
  booktitle    = {Proc. ADKDD'14: Eighth Int. Workshop on Data Mining for Online Advertising (ADKDD'14)},
  pages        = {1--9},
  year         = {2014},
  organization = {ACM},
  doi          = {10.1145/2648584.2648589}
}

@article{hermant2024study,
  title={Study of the behaviour of Nesterov Accelerated Gradient in a non convex setting: the strongly quasar convex case},
  author={Hermant, J and Aujol, J-F and Dossal, C and Rondepierre, A},
  journal={arXiv preprint arXiv:2405.19809},
  year={2024}
}

@inproceedings{kim2022online,
  title        = {Online Stochastic Gradient Methods under Sub-Weibull Noise and the Polyak-Łojasiewicz Condition},
  author       = {Kim, Seunghyun and Madden, Liam and Dall'Anese, Emiliano},
  booktitle    = {Proc. 61st IEEE Conf. Decision and Control (CDC 2022)},
  pages        = {3499--3506},
  year         = {2022},
  organization = {IEEE},
  doi          = {10.1109/CDC51059.2022.9993166}
}

@inproceedings{zhang2015online,
  title={Online bandit learning for a special class of non-convex losses},
  author={Zhang, Lijun and Yang, Tianbao and Jin, Rong and Zhou, Zhi-Hua},
  booktitle={Proceedings of the AAAI Conference on Artificial Intelligence},
  volume={29},
  number={1},
  year={2015}
}

@article{zhang2017improved,
  author  = {Lijun Zhang and Tianbao Yang and Jinfeng Yi and Rong Jin and Zhi-Hua Zhou},
  title   = {Improved Dynamic Regret for Non-Degenerate Functions},
  journal = {Advances in Neural Information Processing Systems},
  volume  = {30},
  year    = {2017}
}

@STRING{NIPS = {NeurIPS}}

@string{NIPS = {NeurIPS}}

@article{P,
  author  = {B.T. Poljak},
  title   = {Existence theorems and convergence of minimizing sequences in extremum problems with restrictions},
  journal = {Soviet Mathematics Doklady},
  volume  = {7},
  pages   = {72--75},
  year    = {1966}
}
 \end{document}